\newcommand{\Px}{ \mathbb{P} }
\newcommand{\Ex}{ \mathbb{E} }
\def\esssup_#1{\underset{#1}{\mathrm{ess\,sup\, }}}
\def\essinf_#1{\underset{#1}{\mathrm{ess\,inf\, }}}
\def\argmax_#1{\underset{#1}{\mathrm{arg\,max\, }}}
\def\argmin_#1{\underset{#1}{\mathrm{arg\,min\, }}}
\newcommand{\F}{\mathcal{F}}
\newcommand{\R}{\mathds{R}}
\newtheorem{theorem}{Theorem}[section]
\newtheorem{definition}{Definition}[section]
\numberwithin{equation}{section}
\newtheorem{assumption}[theorem]{Assumption}
\newtheorem{proposition}[theorem]{Proposition}
\newtheorem{remark}[theorem]{Remark}
\newtheorem{lemma}[theorem]{Lemma}
\definecolor{Red}{rgb}{1.00, 0.00, 0.00}
\definecolor{DRed}{rgb}{0.5, 0.00, 0.00}
\definecolor{Blue}{rgb}{0.00, 0.00, 1.00}
\definecolor{Green}{rgb}{0.0, 0.4, 0.0}
\title{Continuous-time reinforcement learning for optimal\\switching over multiple regimes}
\author{ 
Yijie Huang \thanks{Department of Applied Mathematics, The Hong Kong Polytechnic University, Kowloon, Hong Kong. Email:\url{yijie.huang@polyu.edu.hk}}
\and Mengge Li\thanks{Department of Applied Mathematics,  The Hong Kong Polytechnic University, Kowloon, Hong Kong. Email:\url{meng-ge.li@polyu.edu.hk}}
\and Xiang Yu\thanks{Department of Applied Mathematics,  The Hong Kong Polytechnic University, Kowloon, Hong Kong. Email:\url{xiang.yu@polyu.edu.hk}}
	\and Zhou Zhou\thanks{School of Mathematics and Statistics, University of Sydney, Sydney, Australia. Email:\url{zhou.zhou@sydney.edu.au}} }
\begin{document}
\date{\vspace{-1cm}}
\maketitle

\begin{abstract}
This paper studies the continuous-time reinforcement learning (RL) for optimal switching problems across multiple regimes. We consider a type of exploratory formulation under entropy regularization where the agent randomizes both the timing of switches and the selection of regimes through the generator matrix of an associated continuous-time finite-state Markov chain. We establish the well-posedness of the associated system of Hamilton-Jacobi-Bellman (HJB) equations and provide a characterization of the optimal policy. The policy improvement and the convergence of the policy iterations are rigorously established by analyzing the system of equations. We also show that the value function in the exploratory formulation converges to the one in the classical formulation as the temperature parameter vanishes. Finally, a model-free reinforcement learning algorithm is devised and implemented by invoking the policy evaluation based on the martingale characterization. Our numerical examples with financial applications illustrate the effectiveness and efficiency of the proposed RL algorithm.
 
\vskip 10pt  \noindent
{\bf Keywords:} Optimal regime switching, continuous-time reinforcement learning, system of HJB equations, policy improvement, policy iteration convergence, financial applications
\end{abstract}

\section{Introduction}
The optimal switching problem across multiple regimes entails solving a stochastic optimization problem in which the admissible strategies are formalized by sequences of discrete interventions. A decision-maker in this context faces two basic questions: (i) when to switch from the current regime to another, and (ii) which regime to select when the decision of switching is made. These problems are characterized by their hybrid nature, combining continuous state dynamics with discrete control actions, where each switch between regimes typically incurs a cost while different regimes yield different reward outcomes.  Over past decades, the optimal switching problem has found extensive applications across different fields. Seminal work includes  \cite{carmona2008pricing} on pricing asset scheduling,  \cite{carmona2010valuation} on energy storage valuation, \cite{porchet2009valuation} on power plant valuation, and \cite{olofsson2022management} on hydropower production planning, among others.

The classical stochastic control approach typically assumes a full knowledge of the underlying model. However, this assumption often turns out to be unrealistic in practical applications. RL offers a powerful framework for learning optimal strategies in the unknown environment through trial-and-error interactions. While most conventional RL algorithms are designed in discrete-time settings, many real-life applications evolve continuously in time, motivating an upsurge of recent progress in theories and algorithms of the continuous-time RL approach. Within the continuous-time framework, decision-makers face the fundamental exploration-exploitation trade-off in a continuous-time manner: whether to exploit current knowledge by executing the best-known policy or to explore alternative actions to gather information for potential long-term improvement.  \cite{wang2020reinforcement} addressed this problem by introducing an entropy-regularization on the randomized policy to encourage the exploration. This fundamental study spurred further pioneer studies of theories and algorithms in the continuous-time exploratory framework including \cite{jia2022policy,jia2022gradient,jia2023q}, laying the foundations for the policy evaluation, the policy gradient, and the continuous-time q-learning, respectively. Later, the well-posedness of the exploratory HJB equation, the convergence of policy iterations and the regret analysis have also been examined in \cite{tang2022exploratory, huang2025convergence, tran2025policy,t-z-regret}.

In addition, vast extensions and applications of continuous-time RL algorithms in different context have been considered in the recent literature.  To name a few, \cite{wu2024reinforcement} addressed the continuous-time mean-variance portfolio selection problem in regime-switching markets with unobservable states using reinforcement learning approach; \cite{bo2025optimal} extended the q-learning theory in the model of reflected diffusion processes and applied it to learn the optimal tracking portfolio in incomplete markets; \cite{wei2025continuous} generalized the continuous-time q-learning to mean-field control problems in McKean-Vlasov diffusion models; \cite{wyy2024} studied the continuous-time q-learning for both mean-field control and mean-field game problems from the perspective of the representative agent; 
\cite{RWYZ26} and \cite{RWYZ26-2} further developed the continuous-time q-learning for mean-field control with controlled common noise;
\cite{gao2024reinforcement} studied the extension of q-learning in jump-diffusion models; \cite{bo2024continuous} examined the same jump-diffusion model by invoking the Tsallis entropy;   \cite{dong2024randomized} investigated the optimal stopping in an exploratory framework by considering the randomization of stopping time via the intensity control; \cite{dianetti2024exploratory} utilized the randomization of stopping times as singular control and studied its exploratory formulation under residual entropy regularization; \cite{dai2024learning} exploited the penalization method to transform the optimal stopping problem to an optimal control problem for which the entropy regularization is formalized; \cite{liang2025reinforcement} proposed a continuous-time RL framework for singular stochastic control problems without entropy regularization, characterizing the optimal control through singular control laws;  \cite{liang2025reinforcement-2} further proposed a type of randomization of the singular control laws in \cite{liang2025reinforcement} by considering an auxiliary singular control and entropy regularization, which lead to a time-inconsistent two-stage optimal control problem such that the task is to learn the time-consistent equilibrium.

Despite these advancements of continuous-time RL in different model setups, its application to optimal regime switching problems remains relatively underexplored. This paper studies the exploratory formulation of the optimal regime switching with multiple regimes and bridges its connection to the classical optimal switching problem as the entropy regularization vanishes. To this end, we propose a type of exploratory formulation where the decision-maker randomizes both switching time and the selection of the targeted regime state by invoking a  generator matrix of an associated continuous-time Markov chain (CTMC)  defined on finite state space. The entropy regularization on the generator is imposed to encourage the exploration.  Specifically, we utilize the inherent property of the CTMC—particularly its jump times and state transitions—to determine the switching decision. This formulation, governed by the control of the CTMC's generator matrix, transformed the randomized switching problem into an optimal control problem.

We summarize the main contributions of the present paper as follows:
\begin{itemize}
\item[(i)] We derive the system of exploratory HJB equations and establish the existence of a bounded classical solution to the entropy regularized system (see Lemma \ref{thm:sol-HJB}) by resorting to some partial differential equation (PDE) theories together with a tailor-made truncation argument. Furthermore, we prove its uniqueness and show by a verification theorem (see Proposition \ref{thm-ver}) that this solution coincides with the value function.

\item[(ii)] For the entropy regularized problem, we obtain the characterization of the optimal policy and the policy iteration improvement result in Proposition \ref{thm:improvement}. By analyzing the system of PDEs, we further establish the convergence of the policy iteration to the optimal policy in Theorem \ref{thm:convergence} for the entropy regularized problem with an explicit convergence rate, which is new to the literature. 

\item[(iii)] It is also shown that the value function in the exploratory formulation converges to the value function of the optimal switching problem in the classical formulation as the temperature parameter tends to zero. To facilitate the proof, we resort to some delicate stability analysis of viscosity solutions of the PDE system; see Lemma \ref{lem:uper-lower} and Theorem \ref{thm:convergence-lambda}. This theoretical convergence justifies the choice of small temperature parameter in practical implementations in order to better approximate the optimal decision of switching in the original problem without entropy.

\item[(iv)] Thanks to the policy iteration convergence, a model-free RL algorithm is devised by invoking a policy evaluation and the associated martingale characterization. We resort to a proper stochastic approximation when using the martingale orthogonality condition. An explicit error analysis of this stochastic approximation method is obtained in Theorem \ref{thm:error}. To illustrate the effectiveness of our proposed RL algorithm, we conducted numerical experiments in two examples with satisfactory iteration convergence, both necessitate the application of neural networks to parameterize the targeted functions.  
\end{itemize}

Most existing studies on optimal switching resort to the viscosity solution approach due to the lack of global regularity of solution to the system of variational inequalities, which often impose milder model assumptions. In contrast, our proposed entropy regularization shift the study from the variational inequality system to the PDE system. By imposing some stronger regularity conditions on model coefficients and reward functions and the uniform ellipticity condition (see Assumptions \ref{assump:Lip}, \ref{assump:bound} and  \ref{assump:Holder}), we are allowed to prove the existence of bounded classical solution to the system of exploratory PDEs. We emphasize that these model assumptions are essential to guarantee the desired regularity of solution to the exploratory PDE system and facilitate our technical proofs in establishing some key theoretical foundations for the model-free RL algorithm, namely the policy improvement property (Proposition \ref{thm:improvement}) and the explicit super-exponential convergence rate of policy iteration (Theorem \ref{thm:convergence}).

Let us also briefly compare the present work with three recent related studies. \cite{denkert2025control} introduced a control randomization method without entropy regularization in continuous-time RL with the application to optimal switching problems. They developed an Actor-Critic policy gradient algorithm that alternately learns the value function and the optimal intensity policy. In contrast, our paper propose a different randomization approach for the optimal switching problem, utilizing the generator matrix of a CTMC and incorporating entropy regularization to encourage the exploration. A key advantage of our formulation is that the optimal policy depends explicitly on the value function itself, without requiring any of its derivatives. This allows us to parameterize both the policy and the value function using the same set of parameters. More recently, \cite{dai2025reinforcement} developed a RL approach to identify arbitrage strategies in stock index futures. Following the randomization method in   \cite{dong2024randomized}, they randomized the switching times in \cite{dai2025reinforcement} using the Cox processes and formulated the problem as an optimal switching problem with three regimes where the state process is independent of the regimes. In comparison, we consider an exploratory framework for a more general multi-regime optimal switching problem, where the state process dynamics can also depend on the regime states. Furthermore,  we rigorously establish the convergence of the policy iterations with an explicit convergence rate and also show the convergence as the entropy regularization vanishes. Finally, our work differs from \cite{cao2025two}, which studied a randomization scheme for impulse control problems characterized by fixed points of compound operators combining regularized nonlocal and stopping operators. In contrast, our distinct exploratory formulation leads to the study of PDE system instead of a single PDE problem, for which we need to develop some delicate analysis for the system of equations to deduce some desired convergence results. 

The remainder of this paper is organized as follows. Section \ref{sec:classical} reviews the classical optimal switching problem and presents preliminary results on viscosity solutions to the associated system of HJB variational inequalities. Section \ref{sec:exploratory} introduces the exploratory formulation of the optimal switching problem, providing a regularity analysis of the value function and the characterization of the optimal policy. Section \ref{sec:convergence} establishes the policy improvement and the convergence result of policy iterations. Moreover, the convergence of the exploratory solution as the temperature parameter vanishes is also discusses therein. Section \ref{sec:algorithm} develops a reinforcement learning algorithm that implements the martingale-based policy evaluation and the previous policy iteration, accompanied by an error analysis for the proposed algorithm. Finally, Section \ref{sec:numerical} presents some numerical examples demonstrating the very satisfactory performance of our RL algorithm.

\ \\
\noindent{\bf Notations.}\quad We specify the following list of notations for the rest of this paper.
\begin{itemize}
\item $\R^n$ denotes the $n$-dimensional Euclidean space. For all $x=(x_1,\cdots,x_n),y=(y_1,\cdots,y_n)\in\R^n$, we denote by $\cdot$ the scalar product and by $|\cdot|$ the Euclidean norm: 
\begin{align*}
x\cdot y=\sum_{i=1}^n x_iy_i,\quad |x|=\sqrt{x\cdot x}=\sqrt{\sum_{i=1}^nx_i^2}.
\end{align*}
\item $\R^{n\times d}$ is the set of real-valued $n\times d$ matrices. For $\sigma\in \R^{n\times d}$, we denote by $\sigma^{\top}$ the transpose matrix of $\sigma$. For $A=(a_{ij})_{1\leq i,j\leq n}\in \R^{n \times n}$, $\text{tr}(A)=\sum_{i=1}^n a_{ii}$ is the trace of $A$. We define the matrix norm on $ \R^{n\times d}$ as $|\sigma|=(\text{tr}(\sigma\sigma^{\top}))^{\frac{1}{2}}$.
\item For ${\cal O}\subset \R^n$, $C^k({\cal O})$ is the space of all real-valued continuous functions on ${\cal O}$ with continuous derivatives up to order $k$. For $T\geq 0$, $C^{1,2}([0,T]\times {\cal O})$ is the space of real-valued functions $u$ on $[0,T]\times {\cal O}$ whose partial derivatives $\frac{\partial u}{\partial t},\frac{\partial u}{\partial x_i},\frac{\partial^2 u}{\partial x_ix_j}$, $1\leq i,j\leq n$, exist and are continuous on $[0,T]\times {\cal O}$. For $u\in C^2({\cal O})$, we denote by $D_x u$ the gradient vector of $u$ and $D_x^2u$ the Hessian matrix of $u$.
\item For points $P=(t,x),P'=(t,x)\in[0,T]\times \R^n$, we define the parabolic distance between $P$ and $P'$ by
\begin{align*}
d(P,P')=(|t-t'|+|x-x'|^2)^{\frac{1}{2}}.
\end{align*}
\item For ${\cal D}\subset [0,T]\times \R^n$ and $\alpha\in(0,1)$ we introduce the following norms for functions defined on ${\cal D}$:
\begin{align*}
&||u||_{C^0({\cal D})}=\sup_{P\in {\cal D}}|f(P)|,\quad ||u||_{C^\alpha({\cal D})}=||u||_{C^0({\cal D})}+\sup_{P,P'\in{\cal D},P\neq P'}\frac{|u(P)-u(P')|}{d(P,P')^{\alpha}},\nonumber\\
&||u||_{C^{1}({\cal D})}=||u||_{C^0({\cal D})}+\sum_{i=1}^n\left|\left|\frac{\partial u}{\partial x_i}\right|\right|_{C^0({\cal D})},\quad ||u||_{C^{1+\alpha}({\cal D})}=||u||_{C^\alpha({\cal D})}+\sum_{i=1}^n\left|\left|\frac{\partial u}{\partial x_i}\right|\right|_{C^\alpha({\cal D})},\\
&||u||_{C^{2}({\cal D})}=||u||_{C^1({\cal D})}+\sum_{i=1}^n\left|\left|\frac{\partial u}{\partial x_i}\right|\right|_{C^1({\cal D})}+\left|\left|\frac{\partial u}{\partial t}\right|\right|_{C^{0}({\cal D})}, \\
&||u||_{C^{2+\alpha}({\cal D})}=||u||_{C^{1+\alpha}({\cal D})}+\sum_{i=1}^n\left|\left|\frac{\partial u}{\partial x_i}\right|\right|_{C^{1+\alpha}({\cal D})}+\left|\left|\frac{\partial u}{\partial t}\right|\right|_{C^{\alpha}({\cal D})}.
\end{align*}
We say that function $u(t,x)$ is in $C^{q}({\cal D})$ if $||u||_{C^{q}({\cal D})}$ is finite ($q=0,\alpha,1+\alpha,2+\alpha$).
\end{itemize}

\section{Classical Optimal Switching Problem}\label{sec:classical}
This section first reviews the classical optimal switching problem and introduce some preliminary results on viscosity solutions to the associated system of HJB variational inequalities.

 We fix a complete probability space $(\Omega,\mathcal{F},\Px)$, supporting a $d$-dimensional standard Brownian motion $W =(W_t)_{t\geq 0}$. We denote by $\mathbb{F}$ the complete and right continuous filtration generated by $W$. The terminal time is denoted by $T>0$. Let us introduce the domain ${\cal D}:=[0,T)\times \R^n$, then the closure of ${\cal D}$ is given by $\overline{{\cal D}}=[0,T]\times \R^n$.

We then define the set $\mathcal{A}_t$ of admissible switching strategies at time $t\in[0,T]$ as the set of double sequences $\alpha = (\tau_k,\kappa_k)_{k\geq 0}$, where $\left(\tau_k\right)_{k \geq 0}$ is a non-decreasing sequence of $\mathbb{F}$-stopping times with $\tau_0=t$ and $\lim _{k \rightarrow \infty} \tau_k>T$; $\kappa_k$ is an $\mathcal{F}_{\tau_k}$-measurable random variable valued in the set $\mathbb{I}_m=\{1,2,\cdots,m\}$. With a strategy $\alpha=\left(\tau_k, \kappa_k\right)_{k \geq 0} \in \mathcal{A}_t$ and an initial regime value $i \in \mathbb{I}_m$, we associate the process $(I_s^{t,i})_{s \geq t}$ defined by
\begin{align}\label{eq:alpha}
I_s^{t,i}=\sum_{k \geq 0} \kappa_k { \bf 1}_{s\in[\tau_k, \tau_{k+1})},~ s \geq t, \quad I^{t,i}_{t-}=\kappa_0=i.
\end{align}
 Given $(t,x,i)\in[0,T]\times \R^n\times \mathbb{I}_m$, and a switching control $\alpha\in\mathcal{A}_t$, we consider the controlled diffusion $X^{t,x,i,\alpha}=(X_s^{t,x,i,\alpha})_{s\in[t,T]}$  governed by the SDE:
\begin{align}\label{eq:X-classical}
&dX^{t,x,i,\alpha}_{s}=\mu(s,X^{t,x,i,\alpha}_{s},I_s^{t,i})ds+\sigma(s,X^{t,x,i,\alpha}_{s},I_s^{t,i})dW_{s},\quad s\in(t,T],
\end{align}
with $X_t^{t,x,i,\alpha}=x$. We have the following assumptions for the model coefficients.
\begin{assumption}\label{assump:Lip}
\begin{itemize}
\item[(i)] The drift $\mu(\cdot,\cdot,\cdot):[0,T]\times\R^n\times  \mathbb{I}_m \to \R^n$ and volatility  $\sigma(\cdot,\cdot,\cdot):[0,T]\times\R^n\times  \mathbb{I}_m \to \R^{n\times d}$ are 
%measurable functions. Furthermore, there exists a constant $L>0$ such that
uniformly Lipschitz continuous with respect to $x$, i.e.,  there exists a constant $L>0$ such that
\begin{align}\label{eq:Lip}
|\mu(s,x_1,i)-\mu(s,x_2,i)|+|\sigma(s,x_1,i)-\sigma(s,x_2,i)|\leq L|x_1-x_2|
\end{align}
for all $(s,x_1,x_2,i)\in[0,T]\times\R^{2n}\times  \mathbb{I}_m $. 
\item[(ii)] The drift $\mu$ and volatility  $\sigma$ have
linear growth in $x$, i.e.,  there exists a constant $C>0$ such that
\begin{align*}
|\mu(s,x,i)|+|\sigma(s,x,i)|\leq C(1+|x|)
\end{align*}
for all $(s,x,i)\in[0,T]\times\R^{n}\times  \mathbb{I}_m $.
\item [(iii)] There exist some constant $\sigma_0>0$ such that, for all $(t,x,i)\in \overline{{\cal D}}\times\mathbb{I}_m$ and $ \xi\in \R^n$,
\begin{align*}
 \xi\sigma(t,x,i)\sigma^{\top}(t,x,i)\xi^{\top}\geq \sigma_0 \xi\xi^{\top}.
\end{align*}
\end{itemize}
\end{assumption}

Given the initial state $(t,x,i)$,
the expected total profit under the switching strategy $\alpha=\left(\tau_k, \kappa_k\right)_{k \geq 0} \in \mathcal{A}_t$ is given by
\begin{align}\label{eq:J-C}
J_i(t,x; \alpha) = \mathbb{E}\bigg[ \int_t^T  f(s,X_s^{t,x, i,\alpha},I_s^{t,i})ds - \sum\limits_{k=1}^\infty g_{\kappa_{k-1}\kappa_k}{\bf 1}_{\{\tau_k\leq T\}} +h(X_T^{t,x, i,\alpha})\bigg],
\end{align}
where $f(\cdot,\cdot,\cdot) : [0,T]\times \R^n\times\mathbb{I}_m  \to \R$ is the running reward function, $h(\cdot):\R^n \to \R$  is  the terminal reward function, and each $g_{ij}$ denotes the constant cost of switching from regime $i$ to $j$ for all $i\neq j$. We also impose the following assumptions.
\begin{assumption}\label{assump:bound}
\begin{itemize}
\item[(i)]For $i\in \mathbb{I}_m$, the running reward $f(\cdot,\cdot,i)$ and terminal reward  $h(\cdot)$ are assumed to be continuous.  Furthermore, there exists a constant $K_{f,h}>0$ such that
\begin{align}\label{eq:bound-fh}
|f(t,x,i)|+|h(x)|\leq K_{f,h},\quad \forall (t,x,i)\in[0,T]\times \R^n\times\mathbb{I}_m.
\end{align}
\item[(ii)]  For  $i,j\in\mathbb{I}_m$ with $j\neq i$, the cost for switching from regime $i$ to $j$ is positive, that is, $g_{ij}>0$, with the convention $g_{ii}=0$. For $i,j,k\in\mathbb{I}_m$ with $j\neq i,k$, it is less expensive to switch directly in one step from regime $i$ to $k$ than in two steps via an intermediate regime $j$, that is, $g_{ik}<g_{ij}+g_{jk}$. 
\end{itemize}
\end{assumption}
%and we use the convention that $e^{-\rho\tau_n(\omega)} = 0$ when $\tau_n(\omega) = \infty$. We also make the standing assumption $\rho > \max(\mu^1, \mu^2)$. 
The objective is to maximize the expected total profit over all switching strategies $\alpha$. 
Accordingly, the classical value functions is defined by
\begin{align}\label{eq:value-func-C}
V_i(t,x) = \sup\limits_{\alpha\in\mathcal{A}_t} J_i(t,x; \alpha), \quad (t,x,i)\in[0,T]\times\R^n\times \mathbb{I}_m.
\end{align}

By dynamic programming argument, the associated system of HJB variational inequalities is written by, for $i\in\mathbb{I}_m$,
\begin{align}\label{eq:HJB-VI}
\begin{cases}
\displaystyle \min\left\{-\frac{\partial V_i(t,x)}{\partial t}-\mathcal{L}^i_x V_i(t,x)-f(t,x,i),V_i(t,x)-\max_{j\neq i}(V_{j}(t,x)-g_{ij}) \right\}=0,\quad (t,x)\in{\cal D},\\
\displaystyle V_i(T,x)=h(x),\quad x\in\R^n,
\end{cases}
\end{align}
where the operator $\mathcal{L}_x^i$ is defined by
\begin{align*}
\mathcal{L}_x^i l(t,x) := \mu (t,x,i)D_x l(t,x)+\frac{1}{2}\text{tr}(\sigma\sigma^{\top} (t,x,i)D_x^2 l(t,x)),\quad \text{for}~l(t,\cdot)\in C^2(\R^n).
\end{align*} 
The value function $(V_1,\cdots,V_m)$ can be characterized as the viscosity solution of system \eqref{eq:HJB-VI}, which is defined as below.
\begin{definition}\label{def:viscosity-VI}
Let $(u_1,\cdots,u_m)$ be a $m$-uplet of functions defined on $\overline{{\cal D}}$, $\R$-valued and such that $u_i(T,x)=h(x)$ for any $(i,x)\in\mathbb{I}_m\times \R^n$. The  $m$-uplet $(u_1,\cdots,u_m)$ is called:
\begin{itemize}
\item[(i)] a viscosity supersolution (respectively, subsolution) of system \eqref{eq:HJB-VI} if, for each $i\in \mathbb{I}_m$, $u_i$ is lower-semicontinuous (respectively, upper-semicontinuous) on ${\cal D}$ and for any $(t_0,x_0)\in{\cal D}$ and any test function $\varphi_i\in C^{1,2}({\cal D})$ such that $(t_0,x_0)$ is a local minimum point of $u_i-\varphi_i$ (respectively, maximum), we have
\begin{align*}
\min\Bigg\{&-\frac{\partial \varphi_i(t_0,x_0)}{\partial t}-\mathcal{L}^i_x \varphi_i(t_0,x_0)-f(t_0,x_0,i),\nonumber\\
&\qquad\qquad\quad u_i(t_0,x_0)-\max_{j\neq i}(u_{j}(t_0,x_0)-g_{ij}) \Bigg\}\geq 0~ \text{(respectively, $\leq 0$)};
\end{align*}
\item[(ii)] a viscosity solution of system \eqref{eq:HJB-VI} if it both a viscosity supersolution and subsolution.
\end{itemize}
\end{definition}

Using a similar proof of Theorem 5.1  in  \cite{el2013stochastic}, we have the comparison principle for the system \eqref{eq:HJB-VI}. 
\begin{lemma}[Comparison Principle]\label{lem:comparison}
Suppose Assumptions \ref{assump:Lip} and \ref{assump:bound} hold. Let $(u_1,\cdots,u_m)$  be a bounded   viscosity supersolution of system \eqref{eq:HJB-VI}  and $(v_1,\cdots,v_m)$ be a bounded   viscosity subsolution of system \eqref{eq:HJB-VI}. Then $v_i(t,x)\leq u_i(t,x)$ for all $(t,x,i)\in \overline{{\cal D}}\times \mathbb{I}_m$.
\end{lemma}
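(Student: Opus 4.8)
The plan is to run the doubling-of-variables argument for viscosity solutions, adapted to the system of variational inequalities, so that the positivity of the switching costs in Assumption~\ref{assump:bound}(ii) disposes of the obstacle alternative in the $\min$ of \eqref{eq:HJB-VI} while the Lipschitz and growth bounds of Assumption~\ref{assump:Lip} handle the diffusion alternative. I would argue by contradiction, supposing $M:=\max_{i\in\mathbb{I}_m}\sup_{(t,x)\in\overline{{\cal D}}}\big(v_i(t,x)-u_i(t,x)\big)>0$. Since both $m$-uplets meet the terminal condition, $v_i(T,\cdot)-u_i(T,\cdot)\equiv0$, so $M$ cannot be attained at $t=T$; and because $\R^n$ is non-compact I would pass to the penalized quantity $v_i(t,x)-u_i(t,x)-\beta(T-t)-\eta\,e^{\lambda(T-t)}\varrho(x)$, where $\varrho(x):=(1+|x|^2)^{1/2}$, $\lambda$ is a large constant depending only on the Lipschitz and growth constants of $\mu,\sigma$, and $\beta,\eta>0$ are small. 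Boundedness of $u_i,v_i$ then ensures that, for $\beta,\eta$ small, this penalized quantity has supremum $\mathcal{M}_{\beta,\eta}\ge M/2>0$, attained at some $(i^\ast,\hat t,\hat x)$ with $\hat t<T$ and with $\eta e^{\lambda(T-\hat t)}\varrho(\hat x)\le M/2$.

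Fixing such $\beta,\eta$ and an index $i^\ast$ realising $\mathcal{M}_{\beta,\eta}$, I would double the space variable and maximise
\[
\Phi_\epsilon(t,x,y)=v_{i^\ast}(t,x)-u_{i^\ast}(t,y)-\tfrac{1}{2\epsilon}|x-y|^2-\beta(T-t)-\eta e^{\lambda(T-t)}\big(\varrho(x)+\varrho(y)\big)
\]
over $\overline{{\cal D}}\times\R^n$, with maximiser $(t_\epsilon,x_\epsilon,y_\epsilon)$. The usual estimates give $|x_\epsilon-y_\epsilon|^2/\epsilon\to0$ and, along a subsequence, $(t_\epsilon,x_\epsilon,y_\epsilon)\to(\hat t,\hat x,\hat x)$ with $t_\epsilon<T$ for $\epsilon$ small; the parabolic Crandall--Ishii lemma then supplies matching first- and second-order jet elements for $v_{i^\ast}$ at $(t_\epsilon,x_\epsilon)$ and for $u_{i^\ast}$ at $(t_\epsilon,y_\epsilon)$, the jet at the latter point being non-empty thanks to the doubling. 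I then distinguish which of the two terms realises the $\min$ in \eqref{eq:HJB-VI} for the subsolution $v_{i^\ast}$ at $(t_\epsilon,x_\epsilon)$.

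If the obstacle term is active, there is $j\neq i^\ast$ with $v_{i^\ast}(t_\epsilon,x_\epsilon)\le v_{j}(t_\epsilon,x_\epsilon)-g_{i^\ast j}$, while the supersolution property of $u_{i^\ast}$ at $(t_\epsilon,y_\epsilon)$ also forces $u_{i^\ast}(t_\epsilon,y_\epsilon)\ge u_{j}(t_\epsilon,y_\epsilon)-g_{i^\ast j}$; subtracting and letting $\epsilon\to0$ yields $v_{j}(\hat t,\hat x)-u_{j}(\hat t,\hat x)\ge v_{i^\ast}(\hat t,\hat x)-u_{i^\ast}(\hat t,\hat x)$, so $(j,\hat t,\hat x)$ also realises $\mathcal{M}_{\beta,\eta}$. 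Because $\mathbb{I}_m$ is finite and $g_{k\ell}>0$ whenever $k\neq\ell$, iterating this cannot loop — the costs along a closed chain would sum to a nonpositive number — so after relabelling I may assume the diffusion term is active at the doubled maximisers. In that case, subtracting the subsolution inequality for $v_{i^\ast}$ at $(t_\epsilon,x_\epsilon)$ from the supersolution inequality for $u_{i^\ast}$ at $(t_\epsilon,y_\epsilon)$, the $t$-slots contribute the gap $\beta+\lambda\eta e^{\lambda(T-t_\epsilon)}(\varrho(x_\epsilon)+\varrho(y_\epsilon))$, whereas the drift and diffusion terms are controlled, via the Lipschitz bounds, by $C|x_\epsilon-y_\epsilon|^2/\epsilon$ plus the modulus $|f(t_\epsilon,x_\epsilon,i^\ast)-f(t_\epsilon,y_\epsilon,i^\ast)|$ plus a penalty-induced error $\le C_1\eta e^{\lambda(T-t_\epsilon)}\big((1+|x_\epsilon|)+(1+|y_\epsilon|)\big)$ — only linear in $|x|$ because $D^2\varrho$ decays like $|x|^{-1}$. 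Choosing $\lambda\ge 2C_1$ so that $\lambda\varrho(x)\ge C_1(1+|x|)$, the positive term $\lambda\eta e^{\lambda(T-t_\epsilon)}(\varrho(x_\epsilon)+\varrho(y_\epsilon))$ absorbs the penalty error, leaving $\beta\le C|x_\epsilon-y_\epsilon|^2/\epsilon+|f(t_\epsilon,x_\epsilon,i^\ast)-f(t_\epsilon,y_\epsilon,i^\ast)|$; letting $\epsilon\to0$ gives $\beta\le0$, a contradiction. Hence $M\le0$, i.e.\ $v_i\le u_i$ on $\overline{{\cal D}}\times\mathbb{I}_m$.

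The step I expect to be the main obstacle is the ``relabelling'' in the obstacle case: turning the value-level chain $i^\ast\to j\to\cdots$ into a rigorous reduction requires keeping the maximum point $\hat x$ and the chain of regimes mutually consistent across the finitely many regimes — typically via a further localisation of the penalization near $\hat x$ (e.g.\ adding a quartic bump) so that the doubled maximisers genuinely converge to a state at which, by upper semicontinuity, the obstacle term is strictly inactive for the relevant $v_{i^\ast}$ — and it is exactly here that the positivity, and more cleanly the no-loop inequality, of the switching costs in Assumption~\ref{assump:bound}(ii) is indispensable. A secondary technical point is the interplay between the unbounded domain and the merely linear growth of $\mu,\sigma$, which is handled by the exponential-in-time weight $e^{\lambda(T-t)}$ in the penalization: it supplies a positive lower-order term large enough to absorb the growth errors, so that the whole $\eta$-dependent part drops out once $\epsilon\to0$, with no further limit in $\eta$.
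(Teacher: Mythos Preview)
The paper does not give its own proof of this lemma; it simply writes ``By using a similar proof of Theorem~5.1 in \cite{el2013stochastic}, we have the comparison principle for the system \eqref{eq:HJB-VI}.'' Your doubling-of-variables argument with the exponential-in-time spatial penalization $\eta e^{\lambda(T-t)}\varrho(x)$ to handle the unbounded domain, and the no-loop reduction via the strict positivity of the switching costs to dispose of the obstacle alternative, is exactly the standard route and is essentially what that reference carries out; your sketch is correct, and you have correctly flagged the obstacle-case relabelling as the place where care is needed.

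One small remark on that relabelling step, since you singled it out: the passage from the doubled maximisers $(t_\epsilon,x_\epsilon)$ back to $(\hat t,\hat x)$ in the obstacle case does go through cleanly, because along the maximising sequence one has $v_{i^\ast}(t_\epsilon,x_\epsilon)\to v_{i^\ast}(\hat t,\hat x)$ while upper semicontinuity of $v_j$ gives $\limsup v_j(t_\epsilon,x_\epsilon)\le v_j(\hat t,\hat x)$, so the inequality $v_{i^\ast}\le v_j-g_{i^\ast j}$ survives in the limit; the analogous argument with lower semicontinuity of $u_j$ handles the supersolution side. Since $(\hat t,\hat x)$ then also realises $\mathcal M_{\beta,\eta}$ for the index $j$, you may legitimately restart the doubling at that same point with $j$ in place of $i^\ast$, and finiteness of $\mathbb I_m$ forces a cycle whose total cost is strictly positive by Assumption~\ref{assump:bound}(ii) --- in fact the bare positivity $g_{k\ell}>0$ already suffices here, without invoking the triangle inequality.
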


Lemma \ref{lem:comparison} will help the proof of uniqueness of viscosity solution. The next result relates the value function  $(V_1,\cdots,V_m)$  to the system of variational inequalities.

\begin{theorem}\label{thm:viscosity}
Under Assumptions \ref{assump:Lip} and \ref{assump:bound}, the value function $(V_1,\cdots,V_m)$ given by \eqref{eq:value-func-C} is the unique bounded viscosity solution of system \eqref{eq:HJB-VI}.
\end{theorem}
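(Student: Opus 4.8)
The plan is to follow the by-now-standard program for obstacle-type HJB systems associated with switching control, combining a dynamic programming principle (DPP) for the value function with the comparison principle already recorded in Lemma~\ref{lem:comparison}, and then using the semicontinuous-envelope trick to get continuity of $V_i$ for free.

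\emph{Step 1: a priori bounds and the terminal condition.} Since $|f|$ and $|h|$ are bounded by $K_{f,h}$ (Assumption~\ref{assump:bound}) and every switching cost is nonnegative, for each $\alpha\in\mathcal{A}_t$ we have $J_i(t,x;\alpha)\le K_{f,h}(1+T)$, while the control with no switch on $[t,T]$ gives $J_i(t,x;\alpha)\ge -K_{f,h}(1+T)$; hence $\|V_i\|_\infty\le K_{f,h}(1+T)$ for all $i$. The terminal condition $V_i(T,x)=h(x)$ follows on letting $t\uparrow T$: the running-profit term is $O(T-t)$, any single switch costs at least $\min_{i\ne j}g_{ij}>0$, so switching is suboptimal near $T$, and the standard moment estimate under Assumption~\ref{assump:Lip} gives $X^{t,x,i,\alpha}_T\to x$ in $L^2$ uniformly over no-switch controls, so that $V_i(t,x)\to h(x)$; the same estimates give joint continuity of $V_i$ up to $\{t=T\}$, hence the semicontinuous envelopes also take the value $h$ there.

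\emph{Step 2: dynamic programming and the viscosity property.} Using conditioning, a measurable-selection argument and the concatenation of admissible switching controls at stopping times, I would establish the DPP: for every $(t,x,i)$ and every family of $\mathbb{F}$-stopping times $\theta=\theta^\alpha\in[t,T]$,
\begin{align*}
V_i(t,x)=\sup_{\alpha\in\mathcal{A}_t}\mathbb{E}\Big[\int_t^{\theta} f(s,X^{t,x,i,\alpha}_s,I^{t,i}_s)\,ds-\sum_{t<\tau_k\le\theta}g_{\kappa_{k-1}\kappa_k}+V_{I^{t,i}_\theta}(\theta,X^{t,x,i,\alpha}_\theta)\Big].
\end{align*}
To avoid proving the continuity of $V_i$ directly, I would then show that the upper envelope $(V_1^{*},\dots,V_m^{*})$ is a viscosity subsolution and the lower envelope $(V_{1,*},\dots,V_{m,*})$ a viscosity supersolution of \eqref{eq:HJB-VI}. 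For the supersolution part, the obstacle inequality $V_{i,*}(t_0,x_0)\ge\max_{j\ne i}\big(V_{j,*}(t_0,x_0)-g_{ij}\big)$ is immediate from the admissibility of an instantaneous switch $i\to j$ at $t_0$, and the differential inequality $-\partial_t\varphi_i-\mathcal{L}^i_x\varphi_i-f\ge 0$ follows by inserting the no-switch control on $[t_0,\theta]$ into the DPP, applying It\^o's formula to the test function $\varphi_i$, and shrinking the stopping time to $t_0$. For the subsolution part, I would argue by contradiction: if at an interior maximum point of $V_i^{*}-\varphi_i$ both entries of the $\min$ were $\ge 2\eta>0$, then on a small parabolic neighborhood both $-\partial_t\varphi_i-\mathcal{L}^i_x\varphi_i-f\ge\eta$ and $V_i^{*}>\max_{j\ne i}(V_j^{*}-g_{ij})+\eta$ hold; splitting an arbitrary control according to whether its first switch time $\tau_1$ falls before or after the exit time from this neighborhood, and using It\^o on $\varphi_i$ together with the upper bound on the envelopes in the two cases, contradicts the DPP.

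\emph{Step 3: conclusion.} Both $(V_i^{*})$ and $(V_{i,*})$ are bounded and satisfy $V_i^{*}(T,\cdot)=V_{i,*}(T,\cdot)=h$, so the comparison principle (Lemma~\ref{lem:comparison}) gives $V_i^{*}\le V_{i,*}$ on $\overline{{\cal D}}$; combined with the trivial bound $V_{i,*}\le V_i\le V_i^{*}$ this forces all three to coincide, so each $V_i$ is continuous and $(V_1,\dots,V_m)$ is a viscosity solution of \eqref{eq:HJB-VI}. Uniqueness among bounded viscosity solutions is again immediate from Lemma~\ref{lem:comparison}, applied with one solution regarded as a supersolution and the other as a subsolution.

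\emph{Main obstacle.} The technically heaviest, though routine, ingredient is the DPP in Step~2, i.e.\ the measurable-selection and concatenation of switching controls across stopping times; alongside it, the dichotomy argument for the subsolution inequality is the delicate point, since one must carefully control the value $V^{*}$ at the first switch time on the event that it occurs inside the small neighborhood, using upper semicontinuity and the strict sub-optimality of switching there. Once these are settled, Steps~1 and~3 are short.
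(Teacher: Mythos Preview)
Your proposal is correct and follows the same overall architecture as the paper: establish uniform boundedness of the $V_i$ directly from the bounds on $f,h$ and the positivity of the switching costs, invoke a (weak) dynamic programming principle to show that the semicontinuous envelopes are respectively a sub- and supersolution of \eqref{eq:HJB-VI}, and then close with Lemma~\ref{lem:comparison} to obtain continuity and uniqueness. The only difference is one of presentation: the paper outsources your Step~2 entirely to Proposition~4.2 of Bouchard--Touzi (the weak DPP framework), whereas you sketch the envelope and dichotomy arguments explicitly; what you describe is precisely the content of that reference specialized to the switching setting, so the two proofs are substantively the same.
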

\begin{proof}
We begin by proving that the value function $(V_1,\cdots,V_m)$ defined by \eqref{eq:value-func-C} is bounded.  By Assumption \ref{assump:bound}, for any $(i,t,x)\in\mathbb{I}_m\times \overline{{\cal D}}$ and $\alpha\in {\cal A}_t$,
\begin{align*}
J_i(t,x,\alpha)&\leq \mathbb{E}\bigg[ \int_t^T  f(s,X_s^{t,x, i,\alpha},I_s^{t,i})ds  +h(X_T^{t,x, i,\alpha})\bigg]\nonumber\\
&\leq (T-t) K_{f,h}+ K_{f,h},
\end{align*}
which implies $V_i(t,x)\leq (T-t) K_{f,h}+ K_{f,h}$. For the lower bound, consider the no-switching control $\tau_n=\infty$, $n\geq 1$, i.e., $I_s^{t,i}=i$, $s\geq t$. Applying Assumption \ref{assump:bound} again yields
\begin{align*}
V_i(t,x)&\geq  \mathbb{E}\bigg[ \int_t^T  f(s,X_s^{t,x, i,\alpha},I_s^{t,i})ds +h(X_T^{t,x, i,\alpha})\bigg]\nonumber\\
&\geq  -(T-t) K_{f,h}-K_{f,h}.
\end{align*}
Therefore, the value function is bounded. As it is bounded, it follows from Proposition 4.2 in  \cite{bouchard2009stochastic} and Lemma \ref{lem:comparison} that the value function $(V_1,\cdots,V_m)$  is the unique bounded viscosity solution of system \eqref{eq:HJB-VI}.
\end{proof}

 \section{Exploratory Formulation under Entropy Regularization}\label{sec:exploratory}
In this section, we introduce our exploratory formulation of the optimal switching problem, and study the well-posedness of the associated exploratory HJB system as well as the verification theorem.

For the purpose of exploration, we let the agent randomize both the switching times and the choice of regimes to switch to.  This randomization is achieved by considering the control of the generator matrix of a continuous-time finite-state Markov chain $I=(I_t)_{t\in [0,T]}$ on the state space $\mathbb{I}_m$. That is, the agent chooses a feedback policy $\bm{\pi}(t,x) = (\pi_{ij}(t,x))_{i,j\in\mathbb{I}_m}$, where $\pi_{ij}(t,x)$ is the instantaneous intensity of a transition from regime $i$ to regime $j$ at time $t$ when the current state of the controlled diffusion is $X_t=x$. This formulation transforms the original optimal switching problem into a standard optimal control problem over the generator of the Markov chain, where the control acts continuously in time.

For a fixed initial time $t\in[0,T]$, we define the set $\mathbb{U}_t$ of admissible feedback stochastic policies as follows.
\begin{definition}\label{def:admissible}
For $t\in[0,T]$, a policy $\bm{\pi}=(\pi_{ij}(s,x))_{i,j\in\mathbb{I}_m}$ defined on $[t,T]\times\R^n$ belongs to the admissible set $\mathbb{U}_t$ if it satisfies:
\begin{itemize}
\item[(i)] For every $i,j\in\mathbb{I}_m$, the function $\pi_{ij}(\cdot,\cdot):[t,T]\times\R^n \to \R$ is measurable and bounded; moreover, it is uniformly Lipschitz continuous in $x$.
\item[(ii)] For $i\neq j$, $\pi_{ij}(s,x)\ge 0$ for all $(s,x)\in[t,T]\times\R^n$; for $i=j$, $\pi_{ii}(s,x)\le 0$.
\item[(iii)] For every $i\in\mathbb{I}_m$, the row sums satisfy
\[
\sum_{j=1}^m \pi_{ij}(s,x) = 0, \qquad \forall (s,x)\in[t,T]\times\R^n.
\]
\end{itemize}
\end{definition}

Given $(t,x,i)\in[0,T]\times \R^n\times \mathbb{I}_m$,  we consider the controlled diffusion $X=(X_s)_{s\in[t,T]}$  defined by the following SDE:
\begin{align}\label{eq:X-exploratory}
&dX_s=\mu(s,X_{s},I_s)ds+\sigma(s,X_{s},I_s)dW_{s},\quad s\in(t,T].
\end{align}
with $X_t=x$ and $I_{t}=i$. The regime switching process $I=(I_s)_{s\in [t,T]}$ is a continuous-time finite-state Markov chain with state space $\mathbb{I}_m$, whose generator at time $s$ is given by the feedback matrix $\bm{\pi}(s,X_s) = (\pi_{ij}(s,X_s))_{i,j\in\mathbb{I}_m}$.  For $k\geq 1$, denote by $\tau_k$ the $k$-th jump time of process $I$ with $\tau_0=0$ and $\kappa_k:=I_{\tau_k}$.  

The pair $(X,I)$ then forms a regime-switching diffusion with feedback-dependent jump intensities. To make the construction rigorous, we equip the probability space with an independent Poisson random measure ${\cal N}(ds,dz)$ on $[0,T]\times[0,\infty)$ with intensity $ds\,\nu(dz)$, where $\nu$ is a Lebesgue measure on $[0,\infty)$. This random measure allows us to represent the Markov chain $I$ through a stochastic integral with respect to ${\cal N}$ (c.f. Section 2.2 in \cite{nguyen2025hybrid}):
\begin{align}\label{eq:I}
dI_s=\int_{0}^{\infty}p(s,X_{s-},I_{s-},z){\cal N}(ds,dz),\quad s\in[t,T],
\end{align}
where the function $p: [0,T]\times\mathbb{R}^n \times \mathcal{I}_m \times [0,\infty) \mapsto \mathbb{R}$ is given by
\begin{align}\label{eq:p}
p(t,x, i, z)=\sum_{j=1}^{m}(j-i) 1_{\left\{z \in \Delta_{i j}(t,x)\right\}} .
\end{align}
Here, for $j \neq i$, the intervals $\Delta_{i j}(t,x)$ are disjoint, left-closed, and right-open subsets of the positive real half-line, each having length $\pi_{i j}(t,x)$. We set $\Delta_{i j}(t,x)=\emptyset$ if $\pi_{i j}(t,x)=0$ for $i \neq j$. Thus, $p(t,x, i, z)=j-i$ if $z \in \Delta_{i j}(t,x)$, and $p(t,x, i, z)=0$ otherwise. Then, under Assumption 2.1 and the conditions in Definition \ref{def:admissible}, the system \eqref{eq:X-exploratory} admits a unique strong solution $(X,I)$. For detailed proofs of such results in the context of hybrid switching diffusions, we refer the reader to Theorem 2.6 and Corollary 2.8 in Section 2.1, as well as to the discussion of the nonhomogeneous case in Section 2.8 of \cite{nguyen2025hybrid}.

For ${\bm \pi}\in\mathbb{U}_t$, denote by $\bm{\pi}_s=(\pi_{ij}(s,X_s))_{i,j\in\mathbb{I}_m}$. 
%and ${\bm \pi}^i=(\pi^{ij})_{j\in\mathbb{I}_m}$ for $i\in\mathbb{I}_m$. 
To encourage the exploration, we adopt the normalized entropy similar to \cite{dong2024randomized} that $R({\bm \pi},i):=\sum_{j\neq i} \pi^{ij}-\pi^{ij}\log \pi^{ij}$ for $i\in\mathbb{I}_m$. The exploratory formulation of objective functional under entropy regularizer is given by, for $(t,x,i)\in[0,T]\times \R^n\times \mathbb{I}_m$ and ${\bm \pi}\in\mathbb{U}_t$,
\begin{align}\label{eq:J-exploratory}
	&J_{i}^{\lambda}(t,x;{\bm \pi}):=\mathbb{E}_{t,x,i}\bigg[\int^{T}_{t}f(s,X_{s},I_s)ds- \sum_{k=1}^\infty g_{\kappa_{k-1}\kappa_k}{\bf 1}_{\{\tau_k\leq T\}}+\lambda \int^{T}_{t}R({\bm \pi}_s,I_s)ds+h(X_{T})\bigg],
\end{align}
where $\Ex_{t,x,i}[\cdot]:=\Ex[\cdot|X_t=x,I_{t}=i]$, and $\lambda>0$ is the temperature parameter. Furthermore, the optimal value function is denoted by
\begin{align}\label{eq:V}
	V_i^{\lambda}(t,x)=\sup_{{\bm \pi}\in \mathbb{U}_t}J_{i}^{\lambda}(t,x;{\bm \pi}).
\end{align}
% To encourage exploration, we adopt the normalized negentropy suggested in Dong \cite{dong2024randomized},  given by $R(\pi):=\pi-\pi\log \pi$.
%
%
%
%By applying the standard argument of the dynamic programming principle (c.f. Chapter 3 and Chapter 5 in \cite{pham2009continuous}), we have the following auxiliary lemma.
%\begin{lemma}\label{lem:value-func}
%For $(t,x,p)\in[0,T]\times \bR^n\times (0,1]$ and $i\in\mathbb{I}_m$, we have
%\begin{align}
%V_i^{\lambda}(t,x,p)=\sup_{(\pi^1,{\bm q})\in\mathcal{U}_t}\mathbb{E}\bigg[\int^T_{t}f(s,X^{i,t,x}_{s})P^1_sds + \sum_{j\neq i}\int_t^T  (V_j^{\lambda}(s,X^{i,t,x}_{s},1)-g_{ij})q_s^j\pi_s^1P_s^1ds\nonumber\\
%\qquad+\lambda_1 \int_{t}^{T}R_1(\pi_s^1)P_s^1ds+\lambda_2 \int_{t}^{T}R_2({\bm q}_s)P_s^1ds\bigg].
%\end{align}
%\end{lemma}
%
%\section{HJB equation for the exploratory problem}
%For $i\in \mathbb{I}_m$, denote $\mu_i(\cdot,\cdot):=\mu(\cdot,\cdot,i)$ and $\sigma_i(\cdot,\cdot)=\sigma(\cdot,\cdot,i)$.  
  Applying the dynamic programming arguments (c.f. Section 5.3.2 in \cite{pham2009continuous}), we derive the system of coupled HJB equations as follows: for $i\in\mathbb{I}_m$,
\begin{align}\label{eq:HJB-V}
\begin{cases}
 \displaystyle \frac{\partial V^{\lambda}_i(t,x)}{\partial t}+\mathcal{L}^i_x V^{\lambda}_i(t,x)+f(t,x,i)\\
\displaystyle\qquad\quad+\sup_{{\bm \pi}_i}\left\{\sum_{j\neq i}\pi_{ij}(V^{\lambda}_{j}(t,x)-g_{ij}-V^{\lambda}_i(t,x))+\lambda\sum_{j\neq i}(\pi_{ij}-\pi_{ij} \log \pi_{ij})\right\}=0,~(t,x)\in {\cal D},\\
 \displaystyle V^{\lambda}_i(T,x)=h(x),~x\in\R^n.
\end{cases}
\end{align}

%\begin{align}\label{eq:HJB-v}
%\begin{cases}
% \displaystyle  \frac{\partial v_i(t,x)}{\partial t}+\mathcal{L}^i_x v_i(t,x)+f(t,x)\\
% \displaystyle\qquad\qquad+\sup_{\pi>0,{\bm q}\in \mathcal{Q}}\left\{\sum_{j\in \mathbb{I}_m^{(i)}}q_j(v_{j}(t,x)-g_{ij})\pi +\lambda_1(\pi-\pi \log \pi)-\pi  v_i(t,x)-\lambda_2\sum_{j\in \mathbb{I}_m^{(i)}} q_j\log q_j\right\}=0,\\
% \displaystyle v_i(T,x)=0.
%\end{cases}
%\end{align}
%In order to characterize the optimal feedback policy, we introduce a scalar Lagrange multiplier $\mu(t,x):[0,T]\times\R^n\to \R$ to enforce the constraint ${\bm q}\in {\cal Q}$. The
%Lagrangian is written by
%\begin{align*}
%L_i(t,x;\pi, {\bm q})=\sum_{j\in \mathbb{I}_m^{(i)}}q_j(v_{j}(t,x)-g_{ij})\pi +\lambda_1(\pi-\pi \log \pi)-\pi  v_i(t,x)-\lambda_2\sum_{j\in \mathbb{I}_m^{(i)}} q_j+\mu(t,x)\left(\sum_{j\in \mathbb{I}_m^{(i)}}q_j -1\right).
%\end{align*}
 Using the first-order condition, we arrive at the characterization of the optimal feedback policy by
\begin{align}\label{eq:optimal-policy}
    \pi_{ij}^*(t,x)=\exp\left(\frac{V^{\lambda}_{j}(t,x)-g_{ij}-V^{\lambda}_{i}(t,x)}{\lambda}\right),\quad j\in\mathbb{I}_m \setminus \{i\},~(t,x)\in \overline{{\cal D}}.
\end{align}
Plugging \eqref{eq:optimal-policy} into \eqref{eq:HJB-V}, we get
\begin{align}\label{eq:HJB-V-policy}
 \frac{\partial V^{\lambda}_i(t,x)}{\partial t}+\mathcal{L}^i_x V^{\lambda}_i(t,x)+f(t,x,i)+\lambda\sum_{j\neq i}\exp\left(\frac{V^{\lambda}_{j}(t,x)-g_{ij}-V^{\lambda}_{i}(t,x)}{\lambda}\right)=0,~(t,x)\in {\cal D},
\end{align}
with the terminal condition $V^{\lambda}_i(T,x)=h(x)$ for $x\in\R^n$.

To establish the existence of classical solution to the HJB system \eqref{eq:HJB-V}, we impose the following assumption.
\begin{assumption}\label{assump:Holder}
%\begin{itemize}
%\item[(i)] 
%The functions $\mu(\cdot,\cdot,i)$ and $\sigma(\cdot,\cdot,i)$ for $i\in \mathbb{I}_m$  are H{\"o}lder continuous with exponent $\alpha \in(0,1)$.  Moreover,  
%There exist some constant $\sigma_0>0$ such that, for all $(t,x,i)\in \overline{{\cal D}}\times\mathbb{I}_m$ and $ \xi\in \R^n$,
%\begin{align*}
% \xi\sigma(t,x,i)\sigma^{\top}(t,x,i)\xi^{\top}\geq \sigma_0 \xi\xi^{\top}.
%\end{align*}
%\item[(ii)]
The running reward function $f(\cdot,\cdot,i)\in C^{\alpha}({\cal D})$ for $i\in \mathbb{I}_m$ and terminal reward function $h(\cdot)\in C^{2+\alpha}(\R^n)$.
%\end{itemize} 
\end{assumption}
\begin{lemma}\label{thm:sol-HJB}
Let Assumptions \ref{assump:Lip}, \ref{assump:bound} and \ref{assump:Holder} hold. Then for any $\lambda>0$, the system of HJB equations \eqref {eq:HJB-V} has a classical solution $(V^{\lambda}_1,V^{\lambda}_2,\cdots,V^{\lambda}_m)$ with $V^{\lambda}_i\in C^{1,2}({\cal D}) \cap C(\overline{{\cal D}})$ for $i\in\mathbb{I}_m$.
\end{lemma}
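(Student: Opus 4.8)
## Proof Proposal for Lemma \ref{thm:sol-HJB}

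The plan is to construct a classical solution of the semilinear parabolic system \eqref{eq:HJB-V-policy} by a truncation-and-fixed-point argument, followed by an a priori bound that shows the truncation is inactive. First I would replace the nonlinearity $G_i(t,x,\mathbf{v}) := \lambda\sum_{j\neq i}\exp\big((v_j - g_{ij} - v_i)/\lambda\big)$ (where $\mathbf{v} = (v_1,\dots,v_m)$) by a globally Lipschitz, bounded truncation $G_i^N$ obtained by composing the exponentials with a smooth cutoff that equals the identity on a ball $\{|\mathbf{v}| \le N\}$ and is constant outside $\{|\mathbf{v}| \le 2N\}$. For the truncated system, standard linear parabolic Schauder theory (e.g. Friedman, or Ladyzhenskaya–Solonnikov–Uraltseva) applies: given a fixed $\mathbf{w} \in C^{\alpha}(\overline{\cal D})^m$ (or in an appropriate weighted Hölder space on the unbounded domain $\R^n$, using Assumption \ref{assump:Lip}(iii) uniform ellipticity and Assumption \ref{assump:Lip}(i)(ii) on the coefficients, plus Assumption \ref{assump:Holder} on $f$ and $h$), each scalar linear terminal-value problem $\partial_t u_i + \mathcal{L}^i_x u_i + f(\cdot,\cdot,i) + G_i^N(\cdot,\cdot,\mathbf{w}) = 0$, $u_i(T,\cdot)=h$, has a unique bounded solution $u_i \in C^{1,2}({\cal D}) \cap C(\overline{\cal D})$ with a $C^{2+\alpha}$ estimate on compact subsets. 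This defines a map $\Phi^N : \mathbf{w} \mapsto \mathbf{u}$, and I would obtain a fixed point either by Schauder's fixed point theorem (using the compactness of the Hölder embedding on bounded subdomains together with a uniform $C^0$ bound coming from $\|f\|_\infty, \|h\|_\infty$ and boundedness of $G^N$) or, more cleanly, by a contraction argument: since $G^N$ is globally Lipschitz in $\mathbf{w}$ with some constant $L_N$, iterating $\Phi^N$ on short time intervals $[T-\delta, T]$ with $\delta = \delta(L_N)$ small gives a contraction in $C^0$, and one patches finitely many intervals together to reach $[0,T]$. This produces $\mathbf{V}^{N} \in (C^{1,2}({\cal D})\cap C(\overline{\cal D}))^m$ solving the $N$-truncated version of \eqref{eq:HJB-V-policy}.

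The second and crucial step is the a priori $L^\infty$ bound that is independent of $N$. I expect this to be the main obstacle, because the nonlinear term $\lambda\sum_{j\neq i}\exp((V_j - g_{ij}-V_i)/\lambda)$ is nonnegative and a priori unbounded above, so it is not immediate that the solution stays in a fixed ball. The key observation is that this term is exactly the Hamiltonian evaluated at the optimal $\pi^*$, and going back to the sup-representation \eqref{eq:HJB-V}, the quantity $\sup_{\pi_i}\{\sum_{j\neq i}\pi_{ij}(V_j - g_{ij} - V_i) + \lambda\sum_{j\neq i}(\pi_{ij} - \pi_{ij}\log\pi_{ij})\}$ admits a two-sided bound: it is $\ge 0$ (take $\pi_{ij}=0$), and it is $\le \lambda(m-1)e^{(M^*-\min_{i\neq j}g_{ij})/\lambda}$ where $M^* = \max_j\|V_j\|_\infty$ is whatever bound we are trying to establish — which is circular. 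A better route: I would argue via a stochastic/comparison representation. For the upper bound, note that for the truncated problem the (truncated) Hamiltonian is still bounded above by a constant depending only on $N$, but to get $N$-independence I would instead use the Feynman–Kac / verification-type inequality directly: since $\pi_{ij}^* \ge 0$ and $\sum_{j\ne i}(\pi_{ij}^*-\pi_{ij}^*\log\pi_{ij}^*)\le (m-1)$ (the normalized entropy $R$ is bounded above by $m-1$ pointwise because each term $p - p\log p$ is maximized at $p=1$ with value $1$... wait, $p-p\log p$ is unbounded? no: $p - p\log p \to 0^+$ as $p\to 0$, has derivative $-\log p$, so it increases then — actually $\frac{d}{dp}(p-p\log p) = -\log p > 0$ for $p<1$ and $<0$ for $p>1$, so the max over $p\ge 0$ is at $p=1$, value $1$; hence $R(\bm\pi,i)\le m-1$). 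Therefore, along the controlled process, the entropy contributes at most $\lambda(m-1)(T-t)$, and by representing $V_i^\lambda$ as the value of the control problem one gets $|V_i^\lambda|\le \|h\|_\infty + T\|f\|_\infty + \lambda(m-1)T =: M^*$, independent of $N$. This $M^*$ bound for the truncated solution would instead be proven directly at the PDE level: one checks that the constant functions $\pm M^*$ (suitably adjusted, using that $G_i^N\ge 0$ for the supersolution and $G_i^N$ bounded for... ) are ordered barriers — more carefully, $\overline{u} \equiv M^*$ with $M^*$ as above is a supersolution of the truncated system once $N \ge M^*$ because then $G_i^N = G_i$ in the relevant range and $\partial_t\overline u + \mathcal L^i_x\overline u + f + G_i = f + \lambda\sum_{j\ne i}e^{-g_{ij}/\lambda} \le \|f\|_\infty + \lambda(m-1) \le 0$ is NOT automatic — so instead one uses the linear barrier $\overline u(t) = \|h\|_\infty + (\|f\|_\infty + \lambda(m-1))(T-t)$, for which $\partial_t\overline u = -(\|f\|_\infty+\lambda(m-1))$ and $\mathcal L^i_x\overline u = 0$, giving $\partial_t\overline u + \mathcal L^i_x\overline u + f + \lambda\sum_{j\ne i}e^{(\overline u - g_{ij}-\overline u)/\lambda} = -(\|f\|_\infty + \lambda(m-1)) + f + \lambda\sum_{j\ne i}e^{-g_{ij}/\lambda} \le 0$ since $\sum_{j\ne i}e^{-g_{ij}/\lambda}\le m-1$; this is a genuine $N$-free supersolution as soon as $N$ exceeds its sup norm $M^* := \|h\|_\infty + T(\|f\|_\infty + \lambda(m-1))$. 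A symmetric linear subsolution $\underline u(t) = -\|h\|_\infty - \|f\|_\infty(T-t)$ works for the lower bound since the exponential term is $\ge 0$. The comparison principle for the truncated (cooperative, quasimonotone) system — which holds because $\partial_{v_j}G_i^N \ge 0$ for $j\ne i$, i.e. the system is quasimonotone nondecreasing, so scalar maximum-principle arguments apply componentwise — then forces $\underline u \le V_i^N \le \overline u$, hence $\|V_i^N\|_\infty \le M^*$ uniformly in $N$.

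Finally, choosing $N = \lceil M^*\rceil + 1$ (or any $N > M^*$), the truncation is inactive along the solution: $G_i^N(t,x,\mathbf{V}^N) = G_i(t,x,\mathbf{V}^N)$ everywhere, so $\mathbf{V}^N$ solves the original system \eqref{eq:HJB-V-policy}, hence \eqref{eq:HJB-V}. Interior parabolic regularity (bootstrapping Schauder estimates, using that the right-hand side $f + G_i(\cdot,\cdot,\mathbf V^N)$ is now $C^\alpha$ on compacts because $\mathbf V^N$ is at least $C^{1,2} \subset C^\alpha$ there and the exponential is smooth) upgrades $\mathbf{V}^N$ to $C^{1,2}({\cal D})$, and continuity up to $\{t=T\}$ follows from the barrier construction and the continuity of $h$. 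This yields the desired classical solution $(V_1^\lambda,\dots,V_m^\lambda)$ with $V_i^\lambda \in C^{1,2}({\cal D})\cap C(\overline{\cal D})$. The subtlety worth flagging in the write-up is the interplay between the unboundedness of the spatial domain $\R^n$ (which forces one to work in spaces of bounded functions and to quote parabolic existence theory valid on $[0,T]\times\R^n$, e.g. via the fundamental solution associated with the uniformly elliptic $\mathcal L^i_x$) and the need for the $N$-independent bound; the linear-in-time barriers handle exactly this, since they are bounded and require no decay at spatial infinity.
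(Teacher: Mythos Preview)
Your proposal is correct in its essential architecture and shares with the paper the decisive idea: truncate the exponential nonlinearity, obtain an $N$-independent $L^\infty$ bound via the linear-in-time barriers $\overline u(t)=\|h\|_\infty+(\|f\|_\infty+\lambda(m-1))(T-t)$ and $\underline u(t)=-\|h\|_\infty-\|f\|_\infty(T-t)$, then observe that the truncation is inactive. The paper's barrier uses the slightly sharper constant $K=K_{f,h}+\lambda\sup_i\sum_{j\ne i}e^{-g_{ij}/\lambda}$ in place of your $\|f\|_\infty+\lambda(m-1)$, but the mechanism is identical, and the quasimonotonicity you flag ($\partial_{v_j}G_i\ge 0$ for $j\ne i$) is exactly what makes the comparison work.

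Where you diverge is in handling the unbounded spatial domain. You work directly on $[0,T]\times\R^n$ and appeal to a fixed-point/contraction argument for the truncated semilinear system in bounded-function spaces, invoking the fundamental solution of the uniformly parabolic operator; this is legitimate but, as you yourself note, requires care about which linear existence theory on $\R^n$ one cites and about comparison principles for bounded solutions on unbounded domains. The paper instead performs a \emph{second} truncation, to the bounded cylinder ${\cal D}_N=\{|x|<N\}$ with the artificial lateral boundary data $K(T-t)+h(x)$, applies Kusano's bounded-domain existence theorem for semilinear parabolic systems (which directly gives $C^{2+\alpha}$ solutions), and then derives $N$-uniform $C^{1+\delta}$ and $C^{2+\alpha}$ estimates to extract a convergent subsequence as $N\to\infty$ via Arzel\`a--Ascoli. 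The paper's route buys access to an off-the-shelf existence theorem and avoids any discussion of parabolic theory on $\R^n$; your route is more self-contained but has to confront that issue head-on. Either approach delivers the lemma.
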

\begin{proof}
 Given $M > 0$, consider a smooth and non-decreasing cut-off function $\phi_M$ such that $\phi_M (x) = e^x$ for $x \leq  M$, $\phi_M(x)\leq e^{M+1}$ for $x\in(M,M+1)$ and $\phi_M (x) = e^{M+1}$ for $x \geq  M + 1$. Hence, $\phi_M$ is bounded and Lipschitz continuous. Define ${\cal D}_N := \{(t,x):(t,x)\in{\cal D},|x|< N\}$, and let $\partial \mathcal{D}_N$  denote the parabolic boundary of $\mathcal{D}_N$, i.e., the set $([0,T] \times \{ |x| = N\}) \cup (\{T\} \times \{ |x| \le N\})$. First, we will solve the following PDE system: for $i \in\mathbb{I}_m$,
\begin{align}\label{eq:HJB-N-M}
\begin{cases}
\displaystyle \frac{\partial V_i^{M,N}(t,x)}{\partial t}+\mathcal{L}^i_x V_i^{M,N}(t,x)+f(t,x,i)+\lambda \sum_{j\neq i}\phi_M\left(\frac{V^{M,N}_{j}(t,x)-V^{M,N}_{i}(t,x)-g_{ij}}{\lambda}\right)=0,\\[1em]
\displaystyle \hfill (t,x)\in{\cal D}_N,\\[0.5em]
\displaystyle V_i^{M,N}(t,x)=K (T-t)+h(x),\quad (t,x)\in\partial {\cal D}_N,
\end{cases}
\end{align}
where the constant $K>0$ is given by
\begin{align}\label{eq:K}
K:=K_{f,h}+\lambda\sup_{i\in\mathbb{I}_m} \left(\sum_{j\neq i} \exp\left(-\frac{g_{ij}}{\lambda}\right)\right).
\end{align}
For $i\in \mathbb{I}_m$, let us introduce the function
\begin{align*}
u_i(t,x)&=K (T-t)+K_{f,h},\quad (t,x)\in\overline{{\cal D}}_N .
\end{align*}
It follows from Assumption \ref{assump:bound} that
\begin{align}
&\frac{\partial u_i(t,x)}{\partial t}+\mathcal{L}^i_x u_i(t,x)+f(t,x,i)+\lambda \sum_{j\neq i}\phi_M\left(\frac{u_{j}(t,x)-u_{i}(t,x)-g_{ij}}{\lambda}\right)\nonumber\\
&=-K+f(t,x,i)+\lambda \sum_{j\neq i}\phi_M\left(-\frac{g_{ij}}{\lambda}\right)\leq 0,\quad \forall (t,x)\in{\cal D}_N,
\end{align}
and $u_i(t,x)\geq V^{\lambda}_i(t,x)$ for all $(t,x)\in\partial {\cal D}_N$. Similarly, we have 
\begin{align}
&\frac{\partial (-u_i(t,x))}{\partial t}+\mathcal{L}^i_x (-u_i(t,x))+f(t,x,i)+\lambda \sum_{j\neq i}\phi_M\left(\frac{(-u_{j}(t,x))-(-u_{i}(t,x))-g_{ij}}{\lambda}\right)\nonumber\\
&=K+f(t,x,i)+\lambda \sum_{j\neq i}\phi_M\left(-\frac{g_{ij}}{\lambda}\right)\geq 0,\quad \forall (t,x)\in{\cal D}_N,
\end{align}
and $-u_i(t,x)\leq V_i^{\lambda}(t,x)$ for all $(t,x)\in\partial {\cal D}_N$.  Invoking Theorem 2.1 in \cite{kusano1965first}, we obtain that system \eqref{eq:HJB-N-M} has a classical solution $(V_1^{M,N},\cdots,V_m^{M,N})$, with  $V_i^{M,N}\in C^{1+\delta}(\overline{{\cal D}}_N)$ for any $\delta\in(0,1)$ and $V_i^{M,N}\in C^{2+\alpha}(\overline{{\cal D}}_N)$. Furthermore,  we deduce from the comparison theorem (Theorem 1.3 in  \cite{kusano1965first}) that
\begin{align}\label{eq:bound-V-MN}
 |V_i^{M,N}(t,x)|\leq u_i(t,x)=K(T-t)+K_{f,h},\quad \forall (i,t,x)\in \mathbb{I}_m\times \overline{{\cal D}}_N,
\end{align}
which implies that $V_i^{M,N}(t,x)$ is bounded. Thus, by choosing some $M$ large enough, for each $i\in\mathbb{I}_m$, $V^{N}_i:=V^{M,N}_i$ solves the following PDE
\begin{align}\label{eq:HJB-N}
\begin{cases}
\displaystyle \frac{\partial V_i^{N}(t,x)}{\partial t}+\mathcal{L}^i_x V_i^{N}(t,x)+f(t,x,i)+\lambda \sum_{j\neq i}\exp\left(\frac{V^{N}_{j}(t,x)-V^{N}_{i}(t,x)-g_{ij}}{\lambda}\right)=0,\quad (t,x)\in{\cal D}_N,\\[1em]
\displaystyle V_i^{N}(t,x)=K (T-t)+h(x),\quad (t,x)\in\partial {\cal D}_N.
\end{cases}
\end{align}
First, we apply Lemma 2 in \cite{kusano1965first} to the problem \eqref{eq:HJB-N} to derive for any $\delta\in(0,1)$, 
\begin{align*}
||V_i^N||_{C^{1+\delta}({\cal D}_N)}\leq C\left(1+||f(\cdot,\cdot,i)||_{C^{0}({\cal D}_N)}+||h||_{C^{2}({\cal D}_N)}\right).
\end{align*}
In particular, $||V_i^N||_{C^{\alpha}({\cal D}_N)}$ are bounded independently of $N$. We then apply Lemma 1 in \cite{kusano1965first} to the problem \eqref{eq:HJB-N}, obtaining 
\begin{align*}
||V_i^N||_{C^{2+\alpha}({\cal D}_N)}&\leq C\left(1+||f(\cdot,\cdot,i)||_{C^{\alpha}({\cal D}_N)}+||h||_{C^{2+\alpha}({\cal D}_N)}\right)\nonumber\\
&\leq C\left(1+||f(\cdot,\cdot,i)||_{C^{\alpha}({\cal D})}+||h||_{C^{2+\alpha}({\cal D})}\right).
\end{align*}
%Consequently, we can extract from $\{V_{i}^N(t, x)\}$ a subsequence converging uniformly in $\overline{{\cal D}}$ together with the first $x$ , $t$-derivatives  and second $x$-derivatives to a limit function $V_i^{\lambda}$, which is a solution to the HJB system \eqref{eq:HJB-V}. 
%The uniqueness of the solution follows from Theorem 1.3 in  \cite{kusano1965first}. 
In view of the uniform $C^{2+\alpha}(\overline{\mathcal{D}}_N)$ estimates, we deduce that for each fixed $N_0$, the sequence $\{V_i^N\}$ is relatively compact in $C^2(\overline{\mathcal{D}}_{N_0})$. By a diagonal argument, we can extract a subsequence converging uniformly in $\overline{{\cal D}}_{N_0}$ together with the first $x$ , $t$-derivatives  and second $x$-derivatives to a limit function $V_i^{\lambda}$, for every $N_0$. The limit function $V_i^\lambda$ thus belongs to $C^{1,2}(\mathcal{D}) \cap C(\overline{\mathcal{D}})$ and satisfies the PDE on $\mathcal{D}$. The terminal condition is inherited from the boundary condition on $\partial \mathcal{D}_N$ and the continuity at $t=T$. We then complete the proof. %{\color{red}(looks that this comparison theorem requires that the domain for $x$ to be bounded. In addition, there is a Lipschitz continuity condition, which in our case does not satisfy, unless we can argue that any solution is bounded. In any case, here we only need to show the existence of a classical solution, uniqueness can be implied by the next result. so the last sentense can be removed)}({\color{blue} The last sentence has been removed.})
\end{proof}

By the proof of Lemma \ref{thm:sol-HJB}, for any $\lambda>0$, the system of HJB equations \eqref{eq:HJB-V} admits a classical solution $(V^{\lambda}_1,V^{\lambda}_2,\cdots,V^{\lambda}_m)$ satisfying 
\begin{align}\label{eq:bound-V-lambda}
 |V_i^{\lambda}(t,x)|\leq K(T-t)+K_{f,h},\quad \forall (i,t,x)\in \mathbb{I}_m\times \overline{{\cal D}},
\end{align}
where the constant $K>0$ is given by \eqref{eq:K}. We now prove that this bounded classical solution is unique and coincides with the value function.
\begin{proposition}[Verification Theorem]\label{thm-ver}
Suppose Assumptions \ref{assump:Lip}, \ref{assump:bound}, and \ref{assump:Holder} hold, and let $(V_1^{\lambda},V_2^{\lambda},\cdots,V_m^{\lambda})$ be a bounded classical solution to system \eqref{eq:HJB-V}, as provided by Lemma \ref{thm:sol-HJB}. We define a set of feedback functions by
\begin{align}\label{eq:optimal-policy-j}
    \pi_{ij}^*(t,x)=\exp\left(\frac{V^{\lambda}_{j}(t,x)-g_{ij}-V^{\lambda}_{i}(t,x)}{\lambda}\right),\quad j\in\mathbb{I}_m \setminus \{i\},~(t,x)\in \overline{{\cal D}},
\end{align}
and 
\begin{align}\label{eq:optimal-policy-i}
    \pi_{ii}^*(t,x)=-\sum_{j\neq i}\pi_{ij}^*(t,x),\quad (t,x)\in \overline{{\cal D}}.
\end{align}
Consider the process $X^*$ governed by the dynamics \eqref{eq:X-exploratory}, where the generator of the process $I^*$ is given by $ \pi_{ij}^*(t,X_t^*)_{i,j\in\mathbb{I}_m, t\in[0,T]}$. Then, for each $i\in\mathbb{I}_m$, the function $V_i^{\lambda}$ is the value function for problem \eqref{eq:V}, and for any $t\in[0,T]$, the policy ${\bm \pi}^*\in\mathbb{U}_t$  is optimal.
\end{proposition}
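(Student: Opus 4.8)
The plan is a standard verification argument: apply Dynkin's (It\^o's) formula to the process $s\mapsto V^{\lambda}_{I_s}(s,X_s)$ along the joint Markov dynamics $(X,I)$ for an arbitrary admissible $\bm\pi\in\mathbb{U}_t$, handle the accumulated switching costs through the compensator of the jump-counting process, invoke the HJB system \eqref{eq:HJB-V} to get a one-sided pointwise bound (with equality along $\bm\pi^*$), and then take expectations after checking that all local-martingale terms are true martingales. This will yield $V^{\lambda}_i(t,x)\ge J^{\lambda}_i(t,x;\bm\pi)$ for every $\bm\pi$, with equality for $\bm\pi^*$, which gives both claims; since the argument applies to \emph{any} bounded classical solution, uniqueness of such a solution follows as a by-product.

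First I would check that $\bm\pi^*$ is genuinely admissible and that the feedback pair $(X^*,I^*)$ is well defined. By the bound \eqref{eq:bound-V-lambda} and finiteness of the $g_{ij}$, the intensities $\pi^*_{ij}(t,x)=\exp\big((V^{\lambda}_j-g_{ij}-V^{\lambda}_i)/\lambda\big)$ are uniformly bounded, strictly positive for $j\neq i$, and satisfy $\sum_j\pi^*_{ij}=0$ by \eqref{eq:optimal-policy-i}; boundedness of the generator makes the CTMC $I^*$ non-explosive with finitely many jumps on $[0,T]$ a.s. The coupled system \eqref{eq:X-exploratory}–generator is then constructed by interlacing: between successive jump times of $I^*$ one solves the SDE \eqref{eq:X-exploratory} with the Lipschitz, linear-growth coefficients from Assumption \ref{assump:Lip}, while the next jump time and target regime are drawn using $\pi^*_{ij}(\cdot,X^*_\cdot)$; the resulting $\pi^{ij,*}_s=\pi^*_{ij}(s,X^*_s)$ is $\mathbb{F}$-adapted, so $\bm\pi^*\in\mathbb{U}_t$.

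For the main estimate, fix $\bm\pi\in\mathbb{U}_t$ with associated $(X,I)$ and apply It\^o's formula to $V^{\lambda}_{I_s}(s,X_s)$: between jumps the drift contributes $\partial_t V^{\lambda}_{I_s}+\mathcal{L}^{I_s}_x V^{\lambda}_{I_s}$, at the $k$-th jump (with $X$ continuous) there is an increment $V^{\lambda}_{\kappa_k}(\tau_k,X_{\tau_k})-V^{\lambda}_{\kappa_{k-1}}(\tau_k,X_{\tau_k})$ whose compensator is $\int\sum_{j\neq I_r}\pi^{I_r j}_r\big(V^{\lambda}_j-V^{\lambda}_{I_r}\big)(r,X_r)\,dr$, and the cost process $\sum_{\tau_k\le s}g_{\kappa_{k-1}\kappa_k}$ has compensator $\int\sum_{j\neq I_r}\pi^{I_r j}_r g_{I_r j}\,dr$. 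The HJB equation \eqref{eq:HJB-V} gives, for every admissible $\bm\pi_i$, the pointwise inequality $\partial_t V^{\lambda}_i+\mathcal{L}^i_x V^{\lambda}_i+\sum_{j\neq i}\pi_{ij}(V^{\lambda}_j-V^{\lambda}_i)\le -f(\cdot,\cdot,i)+\sum_{j\neq i}\pi_{ij}g_{ij}-\lambda\sum_{j\neq i}(\pi_{ij}-\pi_{ij}\log\pi_{ij})$, with equality when $\bm\pi_i=\bm\pi_i^*$ by \eqref{eq:optimal-policy}–\eqref{eq:HJB-V-policy}. Combining these with $V^{\lambda}_{I_T}(T,X_T)=h(X_T)$, rearranging, using the compensator identity $\mathbb{E}[\int_t^T\sum_{j\neq I_r}\pi^{I_r j}_r g_{I_r j}\,dr]=\mathbb{E}[\sum_{k\ge1}g_{\kappa_{k-1}\kappa_k}\mathbf{1}_{\{\tau_k\le T\}}]$, and taking expectations gives exactly $V^{\lambda}_i(t,x)\ge J^{\lambda}_i(t,x;\bm\pi)$; equality for $\bm\pi^*$ then yields $V^{\lambda}_i(t,x)=J^{\lambda}_i(t,x;\bm\pi^*)$, so $V^{\lambda}_i$ is the value function and $\bm\pi^*$ is optimal.

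The delicate points, which I would treat carefully, are: (i) the rigorous interlacing construction and admissibility of $(X^*,I^*)$ described above; and (ii) justifying that the local-martingale terms vanish in expectation. For (ii), the Brownian term $\int D_x V^{\lambda}_{I_r}\,\sigma(r,X_r,I_r)\,dW_r$ is a true martingale because $D_x V^{\lambda}_i$ is bounded on $\mathcal{D}$ (from the uniform $C^{2+\alpha}(\mathcal{D})$ estimate established in the proof of Lemma \ref{thm:sol-HJB}), $\sigma$ has linear growth, and $X$ has finite moments under Assumption \ref{assump:Lip}; the compensated jump martingales are true martingales since $V^{\lambda}$ is bounded and $I$ has finitely many jumps on $[0,T]$. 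If needed, one localizes along $\tau_N=\inf\{s\ge t:|X_s|\ge N\}$ and passes to the limit by dominated convergence, using \eqref{eq:bound-V-lambda}, the moment bounds for $X$, and boundedness of $f$, $h$, and the intensities. I expect (i)–(ii) to be the only steps requiring real care; the rest is the routine Dynkin computation.
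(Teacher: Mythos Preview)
Your proposal is correct and follows essentially the same verification-by-It\^o approach as the paper: apply It\^o's formula to $V^{\lambda}_{I_s}(s,X_s)$, use the HJB system \eqref{eq:HJB-V} to obtain the one-sided inequality (with equality at $\bm\pi^*$), take expectations, and pass $s\to T$. The paper's proof is considerably terser---it writes the jump part directly in compensator form, does not isolate the martingale-integrability or compensator-identity steps you spell out, and for the well-posedness of $(X^*,I^*)$ simply cites Theorem~2.6 in \cite{nguyen2025hybrid} rather than sketching an interlacing construction.
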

\begin{proof}
For $(i,t,x)\in \mathbb{I}_m\times \overline{\mathcal D}$, $\bm{\pi}\in\mathbb{U}_t$, and $s\in[t,T]$, applying It\^o's rule yields
\begin{align}\label{eq:Ito-V}
V_{I_s}^{\lambda}(s,X_s)&=V_{i}^{\lambda}(t,x)+\int_t^s \left(\frac{\partial V_{I_\ell}^{\lambda}(\ell,X_{\ell})}{\partial t}+\mathcal{L}^{I_{\ell}}_x V_{I_\ell}^{\lambda}(\ell,X_{\ell})\right)d\ell+\int_t^s (D_x V_{I_\ell}^{\lambda} (\ell,X_\ell))^{\top}\sigma(\ell,X_{\ell},I_\ell)dW_{\ell}\nonumber\\
&\quad+ \int_t^s \sum_{j\neq I_\ell}\left(\pi_{I_{\ell}j}(\ell,X_{\ell})(V^{\lambda}_{j}(\ell,X_{\ell})-V^{\lambda}_{I_\ell}(\ell,X_{\ell}))\right)d\ell\nonumber\\
&\quad+
\int_t^s \int_{0}^{\infty}\left(V^{\lambda}_{I_{\ell-}+p(\ell,X_{\ell-}, I_{\ell-},z) }(\ell,X_{\ell-})-V^{\lambda}_{I_{\ell}-}(\ell,X_{\ell-})\right) \tilde{{\cal N}}(d \ell, d z),
\end{align}
where $\tilde{{\cal N}}(ds,dz):={\cal N}(ds,dz)-ds\nu(dz)$ is a martingale measure. By the boundedness of the value function in \eqref{eq:bound-V-lambda}, the stochastic integrals with respect to $W$ and $\tilde{{\cal N}}$ are both martingales.

It follows from \eqref{eq:HJB-V} that for any $(i,t,x)\in\mathbb{I}_m\times[0,T]\times\R^n$,
\begin{align}\label{eq:HJB-ineq}
& \frac{\partial V^{\lambda}_i(t,x)}{\partial t}+\mathcal{L}^i_x V^{\lambda}_i(t,x)+\sum_{j\neq i}\pi_{ij}(V^{\lambda}_{j}(t,x)-g_{ij}-V^{\lambda}_i(t,x))\nonumber\\
&\qquad\geq \sum_{j\neq i}\pi_{ij}g_{ij}-f(t,x,i)-\lambda\sum_{j\neq i}(\pi_{ij}-\pi_{ij} \log \pi_{ij}).
\end{align}
Taking expectations on both sides of \eqref{eq:Ito-V} and applying \eqref{eq:HJB-ineq}, we obtain
\begin{align}\label{eq:value-eq}
V_i^{\lambda}(t,x)\geq \mathbb{E}_{t,x,i}\bigg[\int^{s}_{t}f(\ell,X_{\ell},I_\ell)d\ell- \sum_{k=1}^\infty g_{\kappa_{k-1}\kappa_k}{\bf 1}_{\{\tau_k\leq s\}}+\lambda \int^{s}_{t}R({\bm \pi}_\ell,I_\ell)d\ell+V_{I_s}^{\lambda}(s,X_s)\bigg].
\end{align}
Letting $s\to T$ in \eqref{eq:value-eq} and using the terminal condition $V_i^{\lambda}(T,x)=h(x)$ together with the dominated convergence theorem, we deduce
\begin{align}\label{eq:value-eq-2}
V_i^{\lambda}(t,x)\geq \mathbb{E}_{t,x,i}\bigg[\int^{T}_{t}f(\ell,X_{\ell},I_\ell)d\ell- \sum_{k=1}^\infty g_{\kappa_{k-1}\kappa_k}{\bf 1}_{\{\tau_k\leq T\}}+\lambda \int^{T}_{t}R({\bm \pi}_\ell,I_\ell)d\ell+h(X_T)\bigg].
\end{align}
The inequality \eqref{eq:value-eq-2} holds for any $\bm{\pi}\in\mathbb{U}_t$ and becomes an equality when $\bm{\pi}=\bm{\pi}^*$. Moreover, using the boundedness of the value function in \eqref{eq:bound-V-lambda}, we can verify that $\bm{\pi}^*\in \mathbb{U}_t$ for any $t\in[0,T]$. This completes the proof.
\end{proof}

\section{Policy Iteration and Convergence}\label{sec:convergence}
The goal of this section is to study the policy iteration using the characterization in \eqref{eq:optimal-policy}. In particular, in the context of optimal regime switching, we aim to show the policy improvement and the convergence of policy iterations, which demonstrate that each policy update guarantees the performance enhancement and the repeated iterations will lead to the desired optimal policy when the model is known. We also examine the connection between our exploratory formulation and the classical optimal switching problem by analyzing the limit of the vanishing regularization. 

We first focus on the rule of policy iteration. Given a feedback strategy ${\bm \pi}^n(t,x)=(\pi_{ij}^n(t,x))_{i,j\in \mathbb{I}_m}$, the corresponding value function $(V^{n}_1,\cdots,V^{n}_m)$ satisfies the following PDE system: for $i\in \mathbb{I}_m$,
\begin{align}\label{eq:v-n}
\begin{cases}
 \displaystyle  \frac{\partial V_i^{n}(t,x)}{\partial t}+\mathcal{L}^i_x V_i^{n}(t,x)+f(t,x,i)+H_i({\bm \pi}^n_i(t,x),V_1^n(t,x),\cdots,V_m^n(t,x))=0,\\
 \displaystyle V_i^{n}(T,x)=h(x).
\end{cases}
\end{align}
 Here, ${\bm \pi}_i^n(t,x)=(\pi_{ij}(t,x))_{j\in\mathbb{I}_m}$ for $i\in\mathbb{I}_m$, and the Hamiltomian $H_i({\bm \pi}_i,{\bm y}):\R^{ m}\times\R^m\to \R$ is defined by
\begin{align}\label{eq:H}
H_i({\bm \pi}_i,{\bm y})=\sum_{j\neq i}\pi_{ij}(y_{j}-g_{ij}-y_i)+\lambda\sum_{j\neq i}(\pi_{ij}-\pi_{ij} \log \pi_{ij}).
\end{align}
Having the value function pair $(V^n_1,\cdots,V_m^n)$, one can construct a feedback strategy ${\bm \pi}^{n+1}$ satisfying
\begin{align}\label{eq:policy-n}
    \pi_{ij}^{n+1}(t,x)=\exp\left(\frac{V^n_{j}(t,x)-g_{ij}-V^n_{i}(t,x)}{\lambda}\right),  ~i,j\in\mathbb{I}_m,j\neq i.
\end{align}
We continue this iteration, generating a sequence of strategy-value function pairs. The following result states that each iteration improves the value function.

\begin{proposition}\label{thm:improvement}
Let Assumptions \ref{assump:Lip}, \ref{assump:bound} and \ref{assump:Holder} hold.  Give any initial guess $(V^0_1, \cdots, V^0_m)$ with $V^0_i \in C^0(\overline{{\cal D}})$ for $i\in\mathbb{I}_m$. $\{(V^n_i,\pi^n_{ij})_{i,j\in \mathbb{I}_m}\}_{n=1,2, \ldots}$ are defined iteratively according to \eqref{eq:v-n} and \eqref{eq:policy-n}.  Then, we have that $ V^{n}_i \leq V^{n+1}_i\leq V_i^{\lambda}$ for $i\in\mathbb{I}_m$ and  $n=1,2, \ldots$.
\end{proposition}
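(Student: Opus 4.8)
The plan is to prove the two inequalities $V_i^n \le V_i^{n+1}$ and $V_i^{n+1} \le V_i^\lambda$ separately, in both cases by combining a pointwise (in $({\bm\pi}_i,{\bm y})$) concavity/maximality property of the Hamiltonian $H_i$ with the comparison principle for the associated linear parabolic system. First I would record the key algebraic fact: for fixed ${\bm y}\in\R^m$, the map ${\bm\pi}_i \mapsto H_i({\bm\pi}_i,{\bm y})$ (subject to $\pi_{ij}\ge 0$) is maximized at $\pi_{ij}^* = \exp((y_j - g_{ij} - y_i)/\lambda)$, and the maximal value equals $\lambda\sum_{j\ne i}\exp((y_j - g_{ij} - y_i)/\lambda)$; this is exactly the first-order condition already used in \eqref{eq:optimal-policy}. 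Moreover $H_i$ is affine in each $y_\ell$ with $\ell\ne i$ (coefficient $\pi_{i\ell}\ge 0$) and affine decreasing in $y_i$ (coefficient $-\sum_{j\ne i}\pi_{ij}\le 0$), which is what makes the comparison argument go through on the coupled system.

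For the first inequality, I would subtract the PDE for $V^{n+1}_i$ from the PDE for $V^n_i$. Since ${\bm\pi}^{n+1}$ is the maximizer of $H_i(\cdot, V^n_1,\dots,V^n_m)$ by \eqref{eq:policy-n}, we have $H_i({\bm\pi}^{n+1}_i, {\bm V}^n) \ge H_i({\bm\pi}^n_i, {\bm V}^n)$, so $V^n_i$ is a subsolution of the linear system solved by $V^{n+1}_i$ — more precisely, writing $W_i := V^{n+1}_i - V^n_i$, the functions $W_i$ satisfy a linear parabolic system
\begin{align*}
\frac{\partial W_i}{\partial t} + \mathcal{L}^i_x W_i + \sum_{j\ne i}\pi^{n+1}_{ij}(W_j - W_i) = H_i({\bm\pi}^n_i,{\bm V}^n) - H_i({\bm\pi}^{n+1}_i,{\bm V}^n) \le 0
\end{align*}
on ${\cal D}$ with $W_i(T,\cdot)=0$; here I used that $H_i({\bm\pi}^{n+1}_i,{\bm V}^{n+1}) - H_i({\bm\pi}^{n+1}_i,{\bm V}^n) = \sum_{j\ne i}\pi^{n+1}_{ij}(W_j - W_i)$ by the affine structure. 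A maximum-principle / comparison argument for this linear weakly-coupled system (whose zeroth-order coupling coefficients $\pi^{n+1}_{ij}\ge 0$ off-diagonal and diagonal part $-\sum_{j\ne i}\pi^{n+1}_{ij}$ make it of the right sign type) then forces $W_i \ge 0$, i.e. $V^n_i \le V^{n+1}_i$. One must take some care that the coefficients are regular and bounded enough to apply the comparison result — this follows from Assumption \ref{assump:Holder}, the regularity in Lemma \ref{thm:sol-HJB}, and the boundedness of the $V^n_i$ iterates (which I would verify inductively, using the same a priori bound \eqref{eq:bound-V-lambda}-type estimate so that $\pi^{n+1}_{ij}$ stays bounded).

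For the second inequality, I would argue analogously with $Z_i := V^\lambda_i - V^{n+1}_i$. The PDE \eqref{eq:HJB-V} for $V^\lambda_i$ can be rewritten as $\partial_t V^\lambda_i + \mathcal{L}^i_x V^\lambda_i + f + H_i({\bm\pi}^{*,\lambda}_i, {\bm V}^\lambda) = 0$ where ${\bm\pi}^{*,\lambda}$ is the maximizer from \eqref{eq:optimal-policy}; since ${\bm\pi}^{*,\lambda}$ is optimal, $H_i({\bm\pi}^{*,\lambda}_i,{\bm V}^\lambda)\ge H_i({\bm\pi}^{n+1}_i,{\bm V}^\lambda)$. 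Subtracting the PDE \eqref{eq:v-n} for $V^{n+1}_i$ and again using the affine-in-${\bm y}$ structure to linearize $H_i({\bm\pi}^{n+1}_i,{\bm V}^\lambda) - H_i({\bm\pi}^{n+1}_i,{\bm V}^{n+1}) = \sum_{j\ne i}\pi^{n+1}_{ij}(Z_j - Z_i)$, I get that $Z_i$ satisfies a linear weakly-coupled parabolic system with a nonnegative source term and zero terminal data, whence $Z_i\ge 0$ by the same comparison principle. Assembling the two chains of inequalities over $i$ and $n$ gives $V^n_i \le V^{n+1}_i \le V^\lambda_i$.

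The main obstacle I anticipate is not the algebra but justifying the comparison/maximum principle for the \emph{coupled} linear system at each step with only the regularity and growth control we actually have: the coefficients $\pi^{n+1}_{ij}(t,x) = \exp((V^n_j - g_{ij} - V^n_i)/\lambda)$ are as smooth as $V^n$ but a priori only locally bounded, and the domain ${\cal D}$ is unbounded in $x$, so one needs either an a priori $L^\infty$ bound on the whole iterate sequence (propagating \eqref{eq:bound-V-lambda} inductively, which I would do first) together with a growth/barrier condition at spatial infinity, or a localization argument on ${\cal D}_N$ followed by passing $N\to\infty$, exactly as in the proof of Lemma \ref{thm:sol-HJB}. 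I would handle this by first establishing the uniform bound $|V^n_i(t,x)|\le K(T-t)+K_{f,h}$ for all $n$ (hence uniform bounds on $\pi^{n+1}_{ij}$), and then invoking a standard weakly-coupled parabolic comparison theorem (in the spirit of the comparison result Theorem 1.3 in \cite{kusano1965first} already used above) for bounded solutions on ${\cal D}$.
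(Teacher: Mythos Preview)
Your proposal is correct and, for the policy-improvement inequality $V_i^n \le V_i^{n+1}$, is essentially identical to the paper's proof: both subtract the two linear PDE systems, exploit that ${\bm\pi}^{n+1}$ maximizes $H_i(\cdot,{\bm V}^n)$ to obtain a sign on the source term, and then appeal to the weakly-coupled parabolic comparison principle (Theorem~1.3 of \cite{kusano1965first}) after localizing to ${\cal D}_N$ and passing $N\to\infty$, exactly as you anticipate in your final paragraph.

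The paper departs from your route only for the upper bound $V_i^{n}\le V_i^\lambda$. Rather than run a second PDE comparison on $Z_i=V_i^\lambda - V_i^{n+1}$, the paper simply invokes the verification theorem (Proposition~\ref{thm-ver}): since $V_i^\lambda$ is the optimal value and $V_i^n$ is the value function of the admissible feedback strategy ${\bm\pi}^n$, the inequality $V_i^n\le V_i^\lambda$ follows in one line from optimality, for \emph{all} $n\ge 1$ simultaneously. This buys the paper the uniform a~priori upper bound on the iterates at the outset, which it then uses (together with \eqref{eq:bound-V-lambda}) to control the coefficients $\pi^{n+1}_{ij}$ \emph{before} entering the comparison argument for the improvement step. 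Your inductive scheme for the same bound can be made to work, but note that the lower-bound half of your proposed estimate $|V_i^n|\le K(T-t)+K_{f,h}$ is not automatic for the \emph{linear} system \eqref{eq:v-n} without already controlling ${\bm\pi}^n$; it is really the localization to ${\cal D}_N$ (which you correctly flag) that carries the weight. Your second comparison argument for $Z_i$ is nonetheless valid and has the virtue of being entirely self-contained at the PDE level, without appealing to the probabilistic interpretation of $V_i^n$.
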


\begin{proof}
By Proposition \ref{thm-ver}, the tuple $(V^\lambda_1,\cdots,V_m^{\lambda})$ constitutes the optimal value function for the control problem \eqref{eq:J-exploratory}. Since $(V^{n}_1,\cdots,V^{n}_m)$ is the value function corresponding to the strategy ${\bm \pi}^n(t,x)$ for the same problem, it follows that $V^{n}_i \leq V_i^{\lambda}$ for all $i\in\mathbb{I}_m$ and $n\geq 1$.

Given the uniform bound $V^{n}_i \leq V_i^{\lambda}$, the estimate \eqref{eq:bound-V-lambda}, $V^0_i \in C^0(\overline{{\cal D}})$, and the policy iteration definition \eqref{eq:policy-n}, we deduce that for any $n\geq 1$ and $i,j\in\mathbb{I}_m$, the function $\pi_{ij}^{n}(t,x)$ is bounded on $\overline{{\cal D}}$. For $n\geq 1$, proceeding with a truncation argument analogous to the proof of Lemma \ref{assump:Holder}, we select a suitable constant $\tilde{K}$ and consider the following pair of PDE systems defined on the truncated domain ${\cal D}_N$:
\begin{align}\label{eq:v-n-N}
\begin{cases}
\displaystyle \frac{\partial V_i^{n,N}(t,x)}{\partial t}+\mathcal{L}^i_x V_i^{n,N}(t,x)+f(t,x,i)\\
\displaystyle \quad +\,H_i({\bm \pi}^{n}_i(t,x),V_1^{n,N}(t,x),\cdots,V_m^{n,N}(t,x))=0, & (t,x)\in{\cal D}_N,\\[1em]
\displaystyle V_i^{n,N}(t,x)=\tilde{K} (T-t)+h(x), & (t,x)\in\partial {\cal D}_N,
\end{cases}
\end{align}
and
\begin{align}\label{eq:HJB-n+1-N}
\begin{cases}
\displaystyle \frac{\partial V_i^{n+1,N}(t,x)}{\partial t}+\mathcal{L}^i_x V_i^{n+1,N}(t,x)+f(t,x,i)\\
\displaystyle \quad +\,H_i({\bm \pi}^{n+1,N}_i(t,x),V_1^{n+1,N}(t,x),\cdots,V_m^{n+1,N}(t,x))=0, & (t,x)\in{\cal D}_N,\\[1em]
\displaystyle V_i^{n+1,N}(t,x)=\tilde{K} (T-t)+h(x), & (t,x)\in\partial {\cal D}_N,
\end{cases}
\end{align}
where  policy ${\bm \pi}^{n+1,N}$ is given by
\begin{align}\label{eq:policy-n+1}
    \pi_{ij}^{n+1,N}(t,x)=\exp\left(\frac{V^{n,N}_{j}(t,x)-g_{ij}-V^{n,N}_{i}(t,x)}{\lambda}\right),  ~i,j\in\mathbb{I}_m,~j\neq i.
\end{align}
As in Lemma \ref{assump:Holder}, the bounded solutions satisfy $V^{n,N}_{i} \to V^{n}_{i}$ and $V^{n+1,N}_{i} \to V^{n+1}_{i}$ as $N \to \infty$ for each $i\in\mathbb{I}_m$.

Define the difference $\Delta_i^{n,N}(t,x) := V_i^{n+1,N}(t,x) - V_i^{n,N}(t,x)$ for $i\in\mathbb{I}_m$ and $(t,x)\in\overline{{\cal D}}_N$. Subtracting \eqref{eq:v-n-N} from \eqref{eq:HJB-n+1-N} yields the equation satisfied by $\Delta_i^{n,N}$:
\begin{align}\label{eq:Delta-n}
\frac{\partial \Delta_i^{n,N}(t,x)}{\partial t}+\mathcal{L}^i_x \Delta_i^{n,N}(t,x) 
&+ H_i({\bm \pi}^{n+1,N}_i(t,x),V_1^{n+1,N}(t,x),\cdots,V_m^{n+1,N}(t,x)) \nonumber \\
&- H_i({\bm \pi}^n_i(t,x),V_1^{n,N}(t,x),\cdots,V_m^{n,N}(t,x)) = 0, \quad (t,x)\in {\cal D}_N,
\end{align}
with boundary condition $\Delta_i^{n,N}(t,x)=0$ for $(t,x)\in\partial {\cal D}_N$. A key observation from \eqref{eq:policy-n+1} is that ${\bm \pi}_i^{n+1,N}(t,x)$ maximizes the Hamiltonian:
\begin{align}\label{eq:policy-n-max}
 {\bm \pi}_i^{n+1,N}(t,x) = \argmax_{{\bm \pi} } H_i({\bm \pi}, V_1^{n,N}(t,x),\cdots,V_m^{n,N}(t,x)).
 \end{align}
 Using \eqref{eq:Delta-n} and the maximizing property \eqref{eq:policy-n-max}, we derive for $(t,x)\in {\cal D}_N$:
\begin{align}\label{eq:Delta-n-1}
&\frac{\partial \Delta_i^{n,N}(t,x)}{\partial t}+\mathcal{L}^i_x \Delta_i^{n,N}(t,x)+\sum_{j\neq i}\pi_{ij}^{n+1,N}(t,x)\Delta_j^{n,N}(t,x)-\sum_{j\neq i}\pi_{ij}^{n+1,N}(t,x)\Delta_i^{n,N}(t,x)\nonumber\\
&=-H_i({\bm \pi}_i^{n+1,N}(t,x),V_1^{n+1,N}(t,x),\cdots,V_m^{n+1,N}(t,x))+\sum_{j\neq i}\pi_{ij}^{n+1,N}(t,x)(\Delta_j^{n,N}(t,x)-\Delta_i^{n,N}(t,x))\nonumber\\
&\quad+H_i({\bm \pi}^n_i(t,x),V_1^{n,N}(t,x),\cdots,V_m^{n,N}(t,x))\nonumber\\
&=H_i({\bm \pi}^{n}_i(t,x),V_1^{n,N}(t,x),\cdots,V_m^{n,N}(t,x))-H_i({\bm \pi}^{n+1,N}_i(t,x),V_1^{n,N}(t,x),\cdots,V_m^{n,N}(t,x))\nonumber\\
&\leq 0.
\end{align}
Applying Theorem 1.3 in \cite{kusano1965first}, we deduce that $\Delta^{n,N}_i(t,x)\geq 0$, that is, $V_i^{n+1,N}(t,x)\geq V_i^{n,N}(t,x)$, for all  $i\in\mathbb{I}_m$ and $(t,x)\in\overline{{\cal D}}$. 
Passing to the limit as $N \to \infty$, we have $V_i^{n+1}(t,x)\geq V_i^{n}(t,x)$, which then completes the proof.
\end{proof}

The following theorem, as the first main result of this paper, establishes a fundamental convergence guarantee for our policy iteration method, demonstrating that the sequence of value functions $(V_1^n,\cdots,V_m^n)$ generated through successive iterations converges uniformly to the optimal value functions  $(V_1^{\lambda}\cdots,V^{\lambda}_m)$ of our exploratory optimal switching problem. Moreover, we can obtain the explicit convergence rate for the policy iteration. 

\begin{theorem}\label{thm:convergence}
Let Assumptions \ref{assump:Lip}, \ref{assump:bound} and \ref{assump:Holder} hold.  Give any initial guess $(V^0_1, \cdots, V^0_m)$ with $V^0_i \in C^0(\overline{{\cal D}})$ for $i\in\mathbb{I}_m$. $\{(V^n_i,\pi^n_{ij})_{i,j\in \mathbb{I}_m}\}_{n=1,2, \ldots}$ are defined iteratively according to \eqref{eq:v-n} and \eqref{eq:policy-n}. Then, we have that, for all $n\geq 1$,
\begin{align}\label{eq:convergence-rate}
\sup_{i\in\mathbb{I}_m}\sup_{(t,x)\in\overline{{\cal D}}}|V_i^n(t,x)-V_i^{\lambda}(t,x)|\leq C_1\frac{C_2^n}{n!},
\end{align}
where $C_1,C_2>0$ are constants independent of $n$.
\end{theorem}
\begin{proof}
For $n\geq 0$, let us introduce the function $F^n:[0,T]\to \R_+$ given by
\begin{align}\label{eq:F}
F^n(t):=\sup_{i\in\mathbb{I}_m}\sup_{x\in\R^n}|V_i^n(t,x)-V_i^{\lambda}(t,x)|.
\end{align}
By \eqref{eq:bound-V-MN} and the convergence of $\{V_i^N\}$ to $V_i^\lambda$ as $N\to\infty$, we can obtain 
\begin{align}\label{eq:bound-V}
 |V_i^{\lambda}(t,x)|\leq K(T-t)+K_{f,h}\leq KT+K_{f,h},\quad \forall (i,t,x)\in \mathbb{I}_m\times \overline{{\cal D}},
\end{align}
where the constant $K$ is given by \eqref{eq:K}. This leads to the boundedness of $V_i^{\lambda}(t,x)$, which in turn implies that the policy ${\bm \pi}^*$ from \eqref{eq:optimal-policy} is bounded.  Similarly, by using Proposition \ref{thm:improvement} and \eqref{eq:policy-n}, we can deduce that the sequence of functions $V^n_i(t,x)$ and the corresponding policies ${\bm \pi}^n(t,x)$ are uniformly bounded for $n\geq 1$. Then, it follows from \eqref{eq:optimal-policy}, \eqref{eq:H} and \eqref{eq:policy-n} that
\begin{align}\label{eq:C-star}
&\left|H_i({\bm \pi}_i^{n},V_1^{\lambda},\cdots,V_m^{\lambda})-H_i({\bm \pi}_i^*,V_1^{\lambda},\cdots,V_m^{\lambda})\right|\nonumber\\
&=\Bigg|\sum_{j\neq i}\pi_{ij}^n(V_{j}^{\lambda}-g_{ij}-V_i^{\lambda})+\lambda\sum_{j\neq i}(\pi_{ij}^n-\pi_{ij}^n \log \pi_{ij}^n)\nonumber\\
&\quad+\left(\sum_{j\neq i}\pi_{ij}^*(V_{j}^{\lambda}-g_{ij}-V_i^{\lambda})+\lambda\sum_{j\neq i}(\pi_{ij}^*-\pi_{ij}^* \log \pi_{ij}^*)\right)\Bigg|\nonumber\\
&\leq \sum_{j\neq i}\pi^n_{ij}|V_j^{\lambda}-V_j^n|+|V_i^{\lambda}-V_i^n|\sum_{j\neq i}\pi^n_{ij}\nonumber\\
&\quad+\lambda \sum_{j\neq i}\left|\exp\left(\frac{V^n_{j}-g_{ij}-V^n_{i}}{\lambda}\right)-\exp\left(\frac{V_{j}^{\lambda}-g_{ij}-V_{i}^{\lambda}}{\lambda}\right)\right|\nonumber\\
%&\leq CF^n(t)+C\sum_{j\neq i}\left(|V_j-V_j^n|+|V_i-V_i^n|\right)\nonumber\\
&\leq C^*F^n(t),
\end{align}
where $C^*>0$ is a constant independent of $n$. For $n\geq 0$, we define the function $w_i^n:\overline{{\cal D}}\to \R$ for $i\in\mathbb{I}_m$ as
\begin{align*}
w_i^n(t,x):=V_i^{\lambda}(t,x)-V_i^{n+1}(t,x)-C^*\int_t^T F^n(s)ds,\quad (t,x)\in\overline{{\cal D}}.
\end{align*}
By using \eqref{eq:C-star}, it holds that for any $(t,x)\in{\cal D}$,
\begin{align*}
&\frac{\partial w_i^{n}(t,x)}{\partial t}+\mathcal{L}^i_x w_i^{n}(t,x)+\sum_{j\neq i}\pi_{ij}^{n+1}(t,x)w_j^n(t,x)-\sum_{j\neq i}\pi_{ij}^{n+1}(t,x)w_i^n(t,x)\nonumber\\
&=H_i({\bm \pi}_i^{n+1}(t,x),V_1^{\lambda}(t,x),\cdots,V_m^{\lambda}(t,x))-H_i({\bm \pi}^*_i(t,x),V_1^{\lambda}(t,x),\cdots,V_m^{\lambda}(t,x))+C^*F^n(t)\geq 0,
\end{align*}
and $w_i^n(T,x)= 0$ for $x\in\R^n$. By virtue of Theorem 1.3 in \cite{kusano1965first}, we deduce $w^n_i(t,x)\leq 0$. That is, 
\begin{align}
V_i^{\lambda}(t,x)-V_i^{n+1}(t,x)\leq C^*\int_t^T F^n(s)ds,\quad \forall (i,t,x)\in\mathbb{I}_m\times\overline{{\cal D}}.
\end{align}
This yields the inequality
\begin{align}
F^{n+1}(t)\leq C^*\int_t^T F^n(s)ds,\quad \forall t\in[0,T],
\end{align}
from which we deduce that
\begin{align}
F^n(t)\leq \frac{(C^*)^nT^n}{n!}\sup_{s\in[0,T]}F^1(s),\quad \forall t\in[0,T].
\end{align}
Because $F^1(t)$ is bounded, let $C_2=C^*T$ and $C_1=\sup_{t\in[0,T]}F^1(t)$. Then we obtain that desired result.
\end{proof}

\begin{remark}
By Stirling's approximation, the convergence rate in \eqref{eq:convergence-rate} has the asymptotic behavior
\begin{align*}
O\!\left(\frac{C_2^{\,n}}{n!}\right) \sim O\!\left(\frac{C_2^{\,n}}{\sqrt{2\pi n}\,(n/e)^n}\right)
= O\!\left(\exp\bigl(n\log C_2 - n\log n + n - \tfrac12\log n\bigr)\right).
\end{align*}
Consequently, it behaves like $\exp(n\log C_2 - n\log n + O(n))$, i.e., the super‑exponential rate that is faster than any exponential rate $e^{-C n}$ with $C>0$.

\end{remark}

To establish a connection between our exploratory formulation and the classical optimal switching problem, we next rigorously analyze the convergence result of the exploratory solution as the temperature parameter $\lambda$ approaches zero. Unlike the existing results in \cite{tang2022exploratory} for regular control problem that focus on a single PDE problem, the nature of problem with multiple regime states calls for some distinct analysis to investigate the system of PDEs in our setting.
In particular, we employ some stability analysis of viscosity solutions to the PDE system to examine the limit of vanishing entropy regularization. The mathematical goal is to show that the solution of the system of PDE will converge to the solution of the system of variational inequalities as $\lambda\rightarrow 0$.

%Subsequently, we study the convergence of $(V_1^{\lambda},\cdots,V_m^{\lambda})$ as the temperature parameter $\lambda\to 0$ using the viscosity solution technique.

Let us introduce the upper and lower weak limits of functions $(V_1^{\lambda},\cdots,V_m^{\lambda})$ defined as follows: for $i\in\mathbb{I}_m$ and $(t,x)\in\overline{{\cal D}}$,
\begin{align}\label{eq:upper}
\overline{V}_i(t,x):=\begin{cases}
\displaystyle \limsup_{\lambda\to 0\atop (s,y)\to (t,x), (s,y)\in {\cal D}}V_i^{\lambda}(s,y),&(t,x)\in{\cal D},\\[1em]
\displaystyle  \qquad h(x), &t=T,~x\in\R^n,
\end{cases}
\end{align}
and 
\begin{align}\label{eq:lower}
\underline{V}_i(t,x):=\begin{cases}
\displaystyle\liminf_{\lambda\to 0,\atop (s,y)\to (t,x),(s,y)\in {\cal D}}V_i^{\lambda}(s,y),&(t,x)\in{\cal D},\\[1em]
\displaystyle \qquad h(x), &t=T,~x\in\R^n.
\end{cases}
\end{align}

The next lemma plays a crucial role in establishing the convergence of the value functions $(V_1^{\lambda},\cdots,V_m^{\lambda})$ as the temperature parameter $\lambda$ tends to zero. By defining the upper and lower weak limits, we capture the limiting behavior of these functions. The result asserts that these limits are bounded and satisfy the viscosity solution properties for the system of HJB equations \eqref{eq:HJB-VI}. Specifically, the upper weak limits form a viscosity subsolution, and the lower weak limits form a viscosity supersolution.

\begin{lemma}\label{lem:uper-lower}
Let Assumptions \ref{assump:Lip}, \ref{assump:bound}, and \ref{assump:Holder} hold. Consider the upper and lower weak limits of the functions $(V_1^{\lambda},\cdots,V_m^{\lambda})$, defined by \eqref{eq:upper} and \eqref{eq:lower}, respectively. Then the tuple of upper weak limits $(\overline{V}_1,\cdots,\overline{V}_m)$ is a bounded viscosity subsolution of system \eqref{eq:HJB-VI}, while the tuple of lower weak limits $(\underline{V}_1,\cdots,\underline{V}_m)$ is a bounded viscosity supersolution of system \eqref{eq:HJB-VI}.
\end{lemma}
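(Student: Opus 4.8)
The plan is to use the standard machinery of stability of viscosity solutions under the Barles--Perthame half-relaxed limits, adapted to the coupled PDE system. First I would record the uniform bounds: by \eqref{eq:bound-V-lambda}, $|V_i^{\lambda}(t,x)|\le K(T-t)+K_{f,h}$ with $K$ given by \eqref{eq:K}, and one must check that $K$ stays bounded as $\lambda\to 0$. Since $\lambda\sum_{j\neq i}\exp(-g_{ij}/\lambda)\to 0$ as $\lambda\to 0$ (each exponent is strictly negative because $g_{ij}>0$ by Assumption \ref{assump:bound}), the constant $K$ is bounded uniformly in $\lambda$ on any interval $(0,\lambda_0]$. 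Hence $(\overline V_1,\dots,\overline V_m)$ and $(\underline V_1,\dots,\underline V_m)$ are well-defined, finite, and bounded on $\overline{{\cal D}}$, and $\underline V_i\le\overline V_i$; moreover $\overline V_i$ is upper-semicontinuous and $\underline V_i$ lower-semicontinuous on ${\cal D}$ by construction of the relaxed limits, with the prescribed terminal values $h(x)$ at $t=T$.

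Next I would prove the subsolution property for $(\overline V_1,\dots,\overline V_m)$; the supersolution property for $(\underline V_1,\dots,\underline V_m)$ is entirely symmetric. Fix $i\in\mathbb{I}_m$, a point $(t_0,x_0)\in{\cal D}$, and a test function $\varphi_i\in C^{1,2}({\cal D})$ such that $(t_0,x_0)$ is a strict local maximum of $\overline V_i-\varphi_i$. By the standard lemma on half-relaxed limits there exist $\lambda_k\to 0$ and points $(t_k,x_k)\to(t_0,x_0)$ which are local maxima of $V_i^{\lambda_k}-\varphi_i$ with $V_i^{\lambda_k}(t_k,x_k)\to\overline V_i(t_0,x_0)$. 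Since each $V_i^{\lambda_k}$ is a classical solution of \eqref{eq:HJB-V-policy}, at $(t_k,x_k)$ we may replace derivatives of $V_i^{\lambda_k}$ by those of $\varphi_i$ and obtain
\begin{align*}
\frac{\partial\varphi_i(t_k,x_k)}{\partial t}+\mathcal{L}_x^i\varphi_i(t_k,x_k)+f(t_k,x_k,i)+\lambda_k\sum_{j\neq i}\exp\!\left(\frac{V_j^{\lambda_k}(t_k,x_k)-g_{ij}-V_i^{\lambda_k}(t_k,x_k)}{\lambda_k}\right)=0.
\end{align*}
The key point is to control the exponential sum in the limit. On one hand, $\lambda_k\sum_{j\neq i}\exp((V_j^{\lambda_k}-g_{ij}-V_i^{\lambda_k})/\lambda_k)\ge 0$, so passing to the limit (using $V_i^{\lambda_k}(t_k,x_k)\to\overline V_i(t_0,x_0)$ and uniform boundedness/continuity of $f$, $\varphi_i$ and its derivatives) gives
\begin{align*}
-\frac{\partial\varphi_i(t_0,x_0)}{\partial t}-\mathcal{L}_x^i\varphi_i(t_0,x_0)-f(t_0,x_0,i)\le 0.
\end{align*}
On the other hand I claim $\overline V_i(t_0,x_0)-\max_{j\neq i}(\overline V_j(t_0,x_0)-g_{ij})\le 0$: if instead $\overline V_i(t_0,x_0)>\overline V_j(t_0,x_0)-g_{ij}$ for all $j\neq i$, then along the subsequence $V_i^{\lambda_k}(t_k,x_k)-V_j^{\lambda_k}(t_k,x_k)-g_{ij}\ge\delta>0$ eventually (using $\limsup$-type comparisons; here one invokes $\overline V_j\ge\limsup V_j^{\lambda_k}$ along the relevant points, which requires a little care since the $(t_k,x_k)$ depend on index $i$ only — one uses the definition of $\overline V_j$ as a supremum over all sequences), so $\lambda_k\exp(\delta/\lambda_k)\to+\infty$, contradicting the equation and the boundedness of the other terms. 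The minimum of the two quantities is therefore $\le 0$, which is exactly the subsolution inequality of Definition \ref{def:viscosity-VI}.

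The main obstacle, and the step I would spend the most effort on, is the coupling through the other components $V_j^{\lambda_k}$ at the points $(t_k,x_k)$: the half-relaxed limit is taken componentwise, but the blow-up argument for the obstacle term needs information about $V_j^{\lambda_k}(t_k,x_k)$, and $\overline V_j$ is only an upper bound for $\limsup_{(s,y)\to(t_0,x_0)}V_j^{\lambda}(s,y)$ along \emph{arbitrary} sequences, so one gets $\limsup_k V_j^{\lambda_k}(t_k,x_k)\le\overline V_j(t_0,x_0)$ for free, which is the direction needed to force the exponent to stay bounded above and hence to drive the penalty to $+\infty$; conversely for the supersolution case one uses $\liminf_k V_j^{\lambda_k}(t_k,x_k)\ge\underline V_j(t_0,x_0)$. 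I would state this one-sided semicontinuity of the relaxed limits carefully as a preliminary observation. A secondary technical point is the behavior near $t=T$: one must check that $\overline V_i$ and $\underline V_i$ attain the boundary value $h$ continuously, which follows from the uniform-in-$\lambda$ bound $|V_i^{\lambda}(t,x)-h(x)|\le K(T-t)+|$(oscillation of $h$)$|$ together with the explicit barrier functions $\pm(K(T-t)+K_{f,h})$ already constructed in the proof of Lemma \ref{thm:sol-HJB}; alternatively one simply notes the terminal condition is imposed by definition in \eqref{eq:upper}--\eqref{eq:lower} and Definition \ref{def:viscosity-VI} only tests interior points. Finally, boundedness of both tuples is immediate from \eqref{eq:bound-V-lambda} and the uniform bound on $K$, completing the proof.
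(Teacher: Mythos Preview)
Your overall plan (Barles--Perthame half-relaxed limits, one-sided control of the coupling terms $V_j^{\lambda_k}(t_k,x_k)$ via the definition of $\overline V_j,\underline V_j$) is the right one and is exactly what the paper does. However, your subsolution argument is genuinely broken because you try to prove the two inequalities $A:=-\partial_t\varphi_i-\mathcal L_x^i\varphi_i-f\le 0$ and $B:=\overline V_i-\max_{j\neq i}(\overline V_j-g_{ij})\le 0$ \emph{separately}, and neither holds in general on its own. At a local maximum of $V_i^{\lambda_k}-\varphi_i$ the comparison of Hessians only yields the inequality
\[
-\partial_t\varphi_i(t_k,x_k)-\mathcal L_x^i\varphi_i(t_k,x_k)-f(t_k,x_k,i)\ \le\ \lambda_k\sum_{j\neq i}\exp\Big(\tfrac{V_j^{\lambda_k}-g_{ij}-V_i^{\lambda_k}}{\lambda_k}\Big),
\]
so ``penalty $\ge 0$'' gives you nothing about the sign of the left-hand side unless you first know the penalty tends to $0$. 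Your ``on the other hand'' blow-up is also in the wrong direction: if $B>0$ then, via $\limsup_k V_j^{\lambda_k}(t_k,x_k)\le\overline V_j(t_0,x_0)$ and $V_i^{\lambda_k}(t_k,x_k)\to\overline V_i(t_0,x_0)$, the exponent $V_j^{\lambda_k}-g_{ij}-V_i^{\lambda_k}$ is eventually $\le -\varepsilon/2<0$, so the penalty goes to $0$, not to $+\infty$. There is therefore no contradiction with $B>0$ alone, and indeed in the limiting problem the obstacle inequality is strict throughout the continuation region.

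The correct structure---which is what the paper does---is a single contradiction: assume $A>0$ \emph{and} $B>0$. The assumption $B>0$ forces all exponents to be $\le -\varepsilon/2$, hence the penalty is eventually smaller than $\delta/2$; combined with $A_k\ge\delta/2$ this gives $A_k-\text{penalty}_k>0$, contradicting the displayed inequality above. Your blow-up mechanism (penalty $\to+\infty$) is the right tool, but on the \emph{supersolution} side: there the test inequality reads $A_k\ge\text{penalty}_k\ge 0$, so $A\ge 0$ is immediate, and if $B<0$ then some exponent is eventually $\ge\varepsilon/2$, the penalty explodes, contradicting that it is bounded by the finite quantity $A_k$. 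So the two cases are not ``entirely symmetric''; make sure you keep the roles of the vanishing-penalty and exploding-penalty arguments straight.
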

\begin{proof}
It follows from \eqref{eq:K}, \eqref{eq:bound-V-lambda} and Assumption \ref{assump:bound}-(ii) that
\begin{align*}
 |V_i^{\lambda}(t,x)| \leq KT+K_{f,h}
&= K_{f,h}T+\lambda\sup_{i\in\mathbb{I}_m} \left(\sum_{j\neq i} \exp\left(-\frac{g_{ij}}{\lambda}\right)\right)T +K_{f,h}\\
&\leq K_{f,h}T+\lambda(m-1)T +K_{f,h}
\end{align*}
for all $\lambda>0$ and $(i,t,x)\in \mathbb{I}_m\times \overline{{\cal D}}$. 
This implies that $\overline{V}_i$ and $\underline{V}_i$ for $i\in \mathbb{I}_m$ are bounded functions. Applying Lemma 1.5 in Chapter V of \cite{bardi1997optimal}, $\overline{V}_i$ is  upper-semicontinuous on ${\cal D}$ while $\underline{V}_i$ is  lower-semicontinuous on ${\cal D}$ for every $i\in \mathbb{I}_m$.

We next show that the tuple of upper weak limits $(\overline{V}_1,\cdots,\overline{V}_m)$ is a viscosity subsolution of system \eqref{eq:HJB-VI} using the contradiction argument. For $i\in \mathbb{I}_m$, let  $(t_0,x_0)\in{\cal D}$ and the test function $\varphi_i\in C^{1,2}({\cal D})$ such that $(t_0,x_0)$ is a local maximum of $\overline{V}_i-\varphi_i$. Assume that
\begin{align}\label{eq:sub-1}
\min\Bigg\{&-\frac{\partial \varphi_i(t_0,x_0)}{\partial t}-\mathcal{L}^i_x \varphi_i(t_0,x_0)-f(t_0,x_0,i),\nonumber\\
&\qquad\qquad\quad \overline{V}_i(t_0,x_0)-\max_{j\neq i}(\overline{V}_{j}(t_0,x_0)-g_{ij}) \Bigg\}>0.
\end{align}
That is,
\begin{align}
&\delta:=-\frac{\partial \varphi_i(t_0,x_0)}{\partial t}-\mathcal{L}^i_x \varphi_i(t_0,x_0)-f(t_0,x_0,i)>0,\label{eq:delta}\\
&\varepsilon:=\overline{V}_i(t_0,x_0)-\max_{j\neq i}(\overline{V}_{j}(t_0,x_0)-g_{ij})>0.\label{eq:varepsilon}
\end{align}
In view of Lemma 1.6 in Chapter V of \cite{bardi1997optimal}, there exists a sequence $\{(t_n,x_n)\}_{n \geq 1}$ with $(t_n,x_n)\in {\cal D}$ and   a sequence $\{\lambda_n\}_{n \geq 1}$ with $\lambda_n>0$, $\lim_{n\to \infty}\lambda_n=0$ such that $(t_n,x_n)$ is a local maximum point of $V^{\lambda_n}_i-\varphi_i$ and 
\begin{align}\label{eq:upper-lim-point}
\lim_{n\to \infty}(t_n,x_n)=(t_0,x_0),\quad \lim_{n\to \infty}V^{\lambda_n}_i(t_n,x_n)=\overline{V}_i(t_0,x_0).
\end{align}
Lemma \ref{thm:sol-HJB} implies that for any $\lambda>0$, $(V^{\lambda}_1,V^{\lambda}_2,\cdots,V^{\lambda}_m)$ is a classical solution to the system of of HJB equations \eqref {eq:HJB-V}, thus $V^{\lambda_n}_i$ is a viscosity subsolution of the following PDE:
\begin{align*}
-\frac{\partial V^{\lambda_n}_i(t,x)}{\partial t}-\mathcal{L}^i_x V^{\lambda_n}_i(t,x)-f(t,x,i)-\lambda\sum_{j\neq i}\exp\left(\frac{V^{\lambda_n}_{j}(t,x)-g_{ij}-V^{\lambda_n}_{i}(t,x)}{\lambda}\right)=0.
\end{align*}
Consequently, we have 
\begin{align}\label{eq:sub-2}
-\frac{\partial \varphi_i(t_n,x_n)}{\partial t}-\mathcal{L}^i_x \varphi_i(t_n,x_n)-f(t_n,x_n,i)-\lambda_n\sum_{j\neq i}\exp\left(\frac{V^{\lambda_n}_{j}(t_n,x_n)-g_{ij}-V^{\lambda_n}_{i}(t_n,x_n)}{\lambda}\right)\leq0
\end{align}
for any $n\geq 1$.

From \eqref{eq:delta}, \eqref{eq:varepsilon} and \eqref{eq:upper-lim-point}, it follows that there exists some $n_1>0$ such that for all $n\geq n_1$,
\begin{align*}
-\frac{\partial \varphi_i(t_n,x_n)}{\partial t}-\mathcal{L}^i_x \varphi_i(t_n,x_n)-f(t_n,x_n,i)\geq \frac{\delta}{2},
\end{align*}
and for any $j\in\mathbb{I}_m,j\neq i$,
\begin{align*}
V^{\lambda_n}_j(t_n,x_n)-g_{ij}-V^{\lambda_n}_{i}(t_n,x_n) \leq -\frac{\varepsilon}{2}.
\end{align*}
Selecting $n_2$ such that for all $n\geq n_2$, $\lambda_n\exp(-\frac{\varepsilon}{2\lambda_n})<\frac{\delta}{2(m-1)}$, then for $n\geq \max\{n_1,n_2\}$, we get that 
\begin{align}\label{eq:sub-3}
&-\frac{\partial \varphi_i(t_n,x_n)}{\partial t}-\mathcal{L}^i_x \varphi_i(t_n,x_n)-f(t_n,x_n,i)-\lambda_n\sum_{j\neq i}\exp\left(\frac{V^{\lambda_n}_{j}(t_n,x_n)-g_{ij}-V^{\lambda_n}_{i}(t_n,x_n)}{\lambda}\right)\nonumber\\
&\geq -\frac{\partial \varphi_i(t_n,x_n)}{\partial t}-\mathcal{L}^i_x \varphi_i(t_n,x_n)-f(t_n,x_n,i)-\lambda_n\sum_{j\neq i}\exp\left(-\frac{\varepsilon}{2\lambda_n}\right)\nonumber\\
&\geq\frac{\delta}{2}-\lambda_n(m-1)\exp\left(-\frac{\varepsilon}{2\lambda_n}\right)>0.
\end{align}
The inequalities \eqref{eq:sub-2} and \eqref{eq:sub-3} are contradictory. Therefore, we conclude that the assumption \eqref{eq:sub-1} is note true, which implies that  $(\overline{V}_1,\cdots,\overline{V}_m)$ is a viscosity subsolution of system \eqref{eq:HJB-VI}.

We next show that the tuple of lower weak limits $(\underline{V}_1,\cdots,\underline{V}_m)$ is a viscosity supersolution of system \eqref{eq:HJB-VI} by contradiction. For $i\in \mathbb{I}_m$, let  $(t_0,x_0)\in{\cal D}$ and the test function $\varphi_i\in C^{1,2}({\cal D})$ such that $(t_0,x_0)$ is a local minimum of $\overline{V}_i-\varphi_i$. Assume that
\begin{align}\label{eq:min-leq}
\min\Bigg\{&-\frac{\partial \varphi_i(t_0,x_0)}{\partial t}-\mathcal{L}^i_x \varphi_i(t_0,x_0)-f(t_0,x_0,i),\nonumber\\
&\qquad\qquad\quad \underline{V}_i(t_0,x_0)-\max_{j\neq i}(\underline{V}_{j}(t_0,x_0)-g_{ij}) \Bigg\}<0.
\end{align}
Using Lemma 1.6 in Chapter V of \cite{bardi1997optimal} again, there exists a sequence $\{(t_n,x_n)\}_{n \geq 1}$ with $(t_n,x_n)\in {\cal D}$ and   a sequence $\{\lambda_n\}_{n \geq 1}$ with $\lambda_n>0$, $\lim_{n\to \infty}\lambda_n=0$ such that $(t_n,x_n)$ is a local minimum point of $V^{\lambda_n}_i-\varphi_i$ and 
\begin{align}\label{eq:lower-lim-point}
\lim_{n\to \infty}(t_n,x_n)=(t_0,x_0),\quad \lim_{n\to \infty}V^{\lambda_n}_i(t_n,x_n)=\underline{V}_i(t_0,x_0).
\end{align}
By Lemma \ref{thm:sol-HJB}, for any $\lambda>0$, $(V^{\lambda}_1,V^{\lambda}_2,\cdots,V^{\lambda}_m)$ is a classical solution to the system of of HJB equations \eqref {eq:HJB-V}, thus $V^{\lambda_n}_i$ is a viscosity supersolution of the following PDE:
\begin{align*}
-\frac{\partial V^{\lambda_n}_i(t,x)}{\partial t}-\mathcal{L}^i_x V^{\lambda_n}_i(t,x)-f(t,x,i)-\lambda\sum_{j\neq i}\exp\left(\frac{V^{\lambda_n}_{j}(t,x)-g_{ij}-V^{\lambda_n}_{i}(t,x)}{\lambda}\right)=0.
\end{align*}
Therefore we have 
\begin{align}\label{eq:super-1}
-\frac{\partial \varphi_i(t_n,x_n)}{\partial t}-\mathcal{L}^i_x \varphi_i(t_n,x_n)-f(t_n,x_n,i)-\lambda_n\sum_{j\neq i}\exp\left(\frac{V^{\lambda_n}_{j}(t_n,x_n)-g_{ij}-V^{\lambda_n}_{i}(t_n,x_n)}{\lambda}\right)\geq0
\end{align}
for any $n\geq 1$. We consider two cases for the inequality \eqref{eq:min-leq}.

\noindent{\bf Case 1.} Assume that 
\begin{align}\label{eq:super-2}
-\frac{\partial \varphi_i(t_0,x_0)}{\partial t}-\mathcal{L}^i_x \varphi_i(t_0,x_0)-f(t_0,x_0,i)<0.
\end{align}
By \eqref{eq:super-1}, we have
\begin{align*}
-\frac{\partial \varphi_i(t_n,x_n)}{\partial t}-\mathcal{L}^i_x \varphi_i(t_n,x_n)-f(t_n,x_n,i)\geq \lambda_n\sum_{j\neq i}\exp\left(\frac{V^{\lambda_n}_{j}(t_n,x_n)-g_{ij}-V^{\lambda_n}_{i}(t_n,x_n)}{\lambda}\right)\geq0,
\end{align*}
which yields
\begin{align*}
&-\frac{\partial \varphi_i(t_0,x_0)}{\partial t}-\mathcal{L}^i_x \varphi_i(t_0,x_0)-f(t_0,x_0,i)\nonumber\\
&=\lim_{n\to \infty} \left(-\frac{\partial \varphi_i(t_n,x_n)}{\partial t}-\mathcal{L}^i_x \varphi_i(t_n,x_n)-f(t_n,x_n,i)\right)\geq 0.
\end{align*}
Thus, we obtain a contradiction.\\

\noindent{\bf Case 2.} Assume that 
\begin{align}\label{eq:super-3}
\delta:=-\frac{\partial \varphi_i(t_0,x_0)}{\partial t}-\mathcal{L}^i_x \varphi_i(t_0,x_0)-f(t_0,x_0,i)\geq 0,
\end{align}
and 
\begin{align}\label{eq:super-4}
\varepsilon:=-(\underline{V}_i(t_0,x_0)-\max_{j\neq i}(\underline{V}_{j}(t_0,x_0)-g_{ij}))=\underline{V}_{k}(t_0,x_0)-g_{ik}-\underline{V}_i(t_0,x_0)>0.
\end{align}
By \eqref{eq:lower-lim-point}, \eqref{eq:super-3} and \eqref{eq:super-4}, there exists some $n_1>0$ such that for all $n\geq n_1$,
\begin{align*}
-\frac{\partial \varphi_i(t_n,x_n)}{\partial t}-\mathcal{L}^i_x \varphi_i(t_n,x_n)-f(t_n,x_n,i)\leq \frac{3\delta}{2},
\end{align*}
and
\begin{align*}
V^{\lambda_n}_k(t_n,x_n)-g_{ik}-V^{\lambda_n}_{i}(t_n,x_n) \geq \frac{\varepsilon}{2}.
\end{align*}
Selecting $n_2$ such that for all $n\geq n_2$, $\lambda_n\exp(\frac{\varepsilon}{2\lambda_n})>\frac{3\delta}{2}$, then for $n\geq \max\{n_1,n_2\}$, it holds that 
\begin{align}\label{eq:super-5}
&-\frac{\partial \varphi_i(t_n,x_n)}{\partial t}-\mathcal{L}^i_x \varphi_i(t_n,x_n)-f(t_n,x_n,i)-\lambda_n\sum_{j\neq i}\exp\left(\frac{V^{\lambda_n}_{j}(t_n,x_n)-g_{ij}-V^{\lambda_n}_{i}(t_n,x_n)}{\lambda}\right)\nonumber\\
&\leq -\frac{\partial \varphi_i(t_n,x_n)}{\partial t}-\mathcal{L}^i_x \varphi_i(t_n,x_n)-f(t_n,x_n,i)-\lambda_n\exp\left(\frac{V^{\lambda_n}_{k}(t_n,x_n)-g_{ik}-V^{\lambda_n}_{i}(t_n,x_n)}{\lambda}\right)\nonumber\\
&\leq\frac{3\delta}{2}-\lambda_n\exp\left(\frac{\varepsilon}{2\lambda_n}\right)<0.
\end{align}
The inequalities \eqref{eq:super-1} and \eqref{eq:super-5} are contradictory.

Combining the arguments in two cases above, we conclude that assertion \eqref{eq:min-leq} does not hold. This implies that $(\overline{V}_1,\cdots,\overline{V}_m)$ is a viscosity supersolution of system \eqref{eq:HJB-VI}, which completes the proof.
\end{proof}

As the second main result of this paper, the next theorem
shows the convergence result towards the classical optimal switching problem as the entropy regularization vanishes. 
\begin{theorem}\label{thm:convergence-lambda}
Let Assumptions \ref{assump:Lip}, \ref{assump:bound} and \ref{assump:Holder} hold. Consider the value functions $(V_1,\cdots,V_m)$ of the classical optimal switching problem defined by \eqref{eq:value-func-C}, and the value functions $(V_1^{\lambda},\cdots,V_m^{\lambda})$ of the exploratory optimal switching problem defined by \eqref{eq:V}. Then for any $i\in \mathbb{I}_m$ and $(t,x)\in\overline{{\cal D}}$,
\begin{align}
\lim_{\lambda\to 0}V_i^{\lambda}(t,x)=V_i(t,x).
\end{align}
\end{theorem}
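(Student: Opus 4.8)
The plan is to run the Barles--Perthame method of half-relaxed limits, treating the tuple of exploratory value functions $(V_1^{\lambda},\cdots,V_m^{\lambda})$ as a family indexed by the vanishing parameter $\lambda$ and squeezing it between the weak upper and lower limits $(\overline{V}_1,\cdots,\overline{V}_m)$ and $(\underline{V}_1,\cdots,\underline{V}_m)$ defined in \eqref{eq:upper}--\eqref{eq:lower}. The machinery needed has already been assembled: Lemma \ref{lem:uper-lower} identifies these limits as a bounded viscosity sub- and supersolution of the classical system \eqref{eq:HJB-VI}, Lemma \ref{lem:comparison} is the comparison principle for that system, and Theorem \ref{thm:viscosity} provides uniqueness of the bounded viscosity solution together with its identification with $(V_1,\cdots,V_m)$.

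\textbf{Key steps.} First I would record the elementary inequality $\underline{V}_i(t,x)\le\overline{V}_i(t,x)$ for all $(i,t,x)\in\mathbb{I}_m\times\overline{{\cal D}}$: on ${\cal D}$ it follows by taking the constant sequence $(s,y)\equiv(t,x)$ in both \eqref{eq:upper} and \eqref{eq:lower}, and at $t=T$ both sides equal $h(x)$ by definition. Next, by Lemma \ref{lem:uper-lower}, $(\overline{V}_1,\cdots,\overline{V}_m)$ is a bounded viscosity subsolution and $(\underline{V}_1,\cdots,\underline{V}_m)$ a bounded viscosity supersolution of \eqref{eq:HJB-VI}, and both satisfy the terminal condition $\overline{V}_i(T,\cdot)=\underline{V}_i(T,\cdot)=h$ required in Definition \ref{def:viscosity-VI}. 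Applying Lemma \ref{lem:comparison} with supersolution $(\underline{V}_1,\cdots,\underline{V}_m)$ and subsolution $(\overline{V}_1,\cdots,\overline{V}_m)$ yields $\overline{V}_i(t,x)\le\underline{V}_i(t,x)$ everywhere, hence $\overline{V}_i\equiv\underline{V}_i$ on $\overline{{\cal D}}$; call the common function $\widehat{V}_i$. Then $(\widehat{V}_1,\cdots,\widehat{V}_m)$ is simultaneously a viscosity sub- and supersolution, i.e. a bounded viscosity solution of \eqref{eq:HJB-VI}, so by the uniqueness assertion of Theorem \ref{thm:viscosity} we get $\widehat{V}_i=V_i$ for every $i\in\mathbb{I}_m$. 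Finally, for $(t,x)\in{\cal D}$ the definitions give
\begin{align*}
\underline{V}_i(t,x)\le\liminf_{\lambda\to 0}V_i^{\lambda}(t,x)\le\limsup_{\lambda\to 0}V_i^{\lambda}(t,x)\le\overline{V}_i(t,x),
\end{align*}
and since the outer terms now coincide with $V_i(t,x)$, the limit $\lim_{\lambda\to 0}V_i^{\lambda}(t,x)=V_i(t,x)$ exists; the case $t=T$ is trivial because $V_i^{\lambda}(T,x)=h(x)=V_i(T,x)$ for every $\lambda>0$.

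\textbf{Main obstacle.} The substantive work is entirely contained in Lemma \ref{lem:uper-lower}, which is already established; the remaining points merely require care rather than new ideas. One must verify that the half-relaxed limits are finite, which uses the uniform-in-$\lambda$ bound \eqref{eq:bound-V-lambda}--\eqref{eq:K} for $\lambda$ in a neighborhood of $0$ (as already exploited in the proof of Lemma \ref{lem:uper-lower}), so that the boundedness hypotheses of Lemma \ref{lem:comparison} are met. One must also be careful with the terminal-data bookkeeping: the sub/supersolution properties are imposed only on ${\cal D}$, while the comparison principle is applied on $\overline{{\cal D}}$, so it is essential that the prescribed values $\overline{V}_i(T,\cdot)=\underline{V}_i(T,\cdot)=h$ built into \eqref{eq:upper}--\eqref{eq:lower} match the terminal condition in Definition \ref{def:viscosity-VI} and in the classical system \eqref{eq:HJB-VI}.
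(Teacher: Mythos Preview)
Your proposal is correct and follows essentially the same approach as the paper's own proof: invoke Lemma \ref{lem:uper-lower} to obtain the sub/supersolution property of the half-relaxed limits, apply the comparison principle (Lemma \ref{lem:comparison}) to force $\overline{V}_i=\underline{V}_i$, and then identify the common limit with $(V_1,\dots,V_m)$ via the uniqueness in Theorem \ref{thm:viscosity}. Your write-up is in fact slightly more careful about the terminal-time bookkeeping and the trivial inequality $\underline{V}_i\le\overline{V}_i$, but the argument is the same.
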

\begin{proof}
By using Lemma \ref{lem:uper-lower} and Lemma \ref{lem:comparison}, we have 
\begin{align*}
\overline{V}_i(t,x)\leq \underline{V}_i(t,x),\quad \forall i\in\mathbb{I}_m,(t,x)\in\overline{{\cal D}}.
\end{align*}
On the other hand, it follows from the definition of upper and lower weak limits that $\overline{V}_i(t,x)\geq \underline{V}_i(t,x)$, for any $i\in\mathbb{I}_m$ and $(t,x)\in\overline{{\cal D}}$. Thus,  $\overline{V}_i(t,x)=\underline{V}_i(t,x)$, and we denote by
\begin{align*}
V^*_i(t,x):=\overline{V}_i(t,x)=\underline{V}_i(t,x)\quad \text{for}~ i\in\mathbb{I}_m,(t,x)\in\overline{{\cal D}}.
\end{align*}
It follows from \eqref{eq:upper}, \eqref{eq:lower} and  Lemma \ref{lem:uper-lower} that $(V_1^*,\cdots,V_m^*)$ is a bounded viscosity solution of system \eqref{eq:HJB-VI} satisfying $V^*_i(t,x)=\lim_{\lambda\to 0}V_i^{\lambda}(t,x)$. We deduce from Theorem \ref{thm:viscosity} that
\begin{align}
V_i(t,x)=V^*_i(t,x)=\lim_{\lambda\to 0}V_i^{\lambda}(t,x).
\end{align}
Thus, we complete the proof of the theorem.
\end{proof}
Theorem \ref{thm:convergence-lambda} justifies the use of the exploratory formulation as a well-founded mathematical relaxation: as the exploration effect diminishes (as the temperature parameter $\lambda \to 0$), the value function of the exploratory formulation indeed converges towards the value function of the classical optimal switching problem. Mathematically speaking, it is interesting to observe that the solution to the system of PDEs will converge to the solution of system of variational inequalities. Therefore, our exploratory formulation can also be regarded as a penalization approach to study a system of variational inequalities, under which we only need to handle the existence and regularity of solution to a system of PDEs.

\section{Reinforcement Learning Algorithm}\label{sec:algorithm}
In this section, we design a RL algorithm to solve the exploratory optimal switching problem. We assume that the  the drift $\mu$, volatility $\sigma$, running profit $f$,  and terminal reward $h$ are all assumed unknown; whereas the switching costs $g_{ij}$ are known constants and observable. The core of our approach lies in a key reformulation: we have transformed the original optimal switching problem into a standard optimal control problem where we control the generator of the finite-state Markov chain that characterizes the switching regimes. The primary distinction from classical problems is that the agent now actively controls the transition rates between regimes, adding a continuous layer of decision-making on top of the discrete switching choices.

Our choice of the randomization and the exploratory form  leads to an explicit characterization of the optimal policy that depends on the value functions, without involving their derivatives.  Leveraging this solution structure, we adopt the policy evaluation (PE) method based on the martingale characterization method similar to \cite{jia2022policy}, which consider two alternative methods based on a martingale characterization: minimizing a martingale loss function, which provides the best mean-square approximation of the true value function, and solving a system of martingale orthogonality condition with test functions. In what follows, we design the PE algorithm by the martingale orthogonality condition and  the established policy improvement result in Proposition \ref{thm:improvement}.

Recall that given a feedback strategy ${\bm \pi}(t,x)=(\pi_{ij}(t,x))_{i,j\in \mathbb{I}_m}$, the corresponding value function $(v_1^{{\bm \pi}},\cdots,v_m^{{\bm \pi}})$ satisfies the PDE system that for $i\in \mathbb{I}_m$,
\begin{align}\label{eq:V-pi}
\begin{cases}
 \displaystyle  \frac{\partial v_i^{{\bm \pi}}(t,x)}{\partial t}+\mathcal{L}^i_x v_i^{{\bm \pi}}(t,x)+f(t,x,i)+H_i({\bm \pi}_i(t,x),v_1^{{\bm \pi}}(t,x),\cdots,v^{{\bm \pi}}_m(t,x))=0,\\
 \displaystyle v^{{\bm \pi}}_i(T,x)=h(x),
\end{cases}
\end{align}
where the Hamiltomian $H_i$ is given by \eqref{eq:H}. For simplicity, we omit the superscript ${{\bm \pi}}$ and denote the value function as:
\begin{align}\label{eq:v-i}
v(t,x,i) = v^{{\bm \pi}}_i(t,x), \quad \text{for } i \in \mathbb{I}_m, ~(t,x) \in \overline{{\cal D}},
\end{align}
and denote by $I = (I_t)_{t \geq 0}$ a continuous-time finite-state Markov chain with generator ${\bm \pi} = (\pi^{ij})_{i,j \in \mathbb{I}_m}$. Let us introduce the process $M=(M_t)_{t\in[0,T]}$ given by
\begin{align}\label{eq:M}
M_t:=v(t,X_t,I_t)+\int_0^t\left(f(s,X_{s},I_s)+\lambda R({\bm \pi}(s,X_s),I_s)\right)ds-\sum_{k=1}^\infty g_{\kappa_{k-1}\kappa_k}{\bf 1}_{\{\tau_k\leq t\}},\quad t\in[0,T].
\end{align}
The next lemma gives the martingale characterization that lays the foundation for the loss function and the policy evaluation RL algorithm. 
\begin{lemma}\label{lem:martingale}
Let  ${\bm \pi}(t,x)=(\pi_{ij}(t,x))_{i,j\in \mathbb{I}_m}$ be a feedback strategy and $v(t,x,i)$ be the corresponding value function given by \eqref{eq:v-i}. Then the process $M=(M_t)_{t\in[0,T]}$ given by \eqref{eq:M} is a square-integrable martingale.
\end{lemma}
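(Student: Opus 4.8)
The plan is to write $M$ explicitly as the sum of a constant, a Brownian stochastic integral, and two compensated jump martingales attached to the regime switches, and then to upgrade the resulting local martingale property to a genuine square-integrable martingale via routine moment estimates.

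First I would record the regularity and bounds to be used. Arguing as in the proof of Lemma \ref{thm:sol-HJB} (truncation combined with interior Schauder estimates applied to the linear system \eqref{eq:V-pi}), each $v^{\bm\pi}_i$ is a bounded classical solution with $v^{\bm\pi}_i\in C^{1,2}({\cal D})\cap C(\overline{{\cal D}})$ and with bounded spatial gradient $D_xv^{\bm\pi}_i$; moreover the feedback intensities $\pi^{ij}$ are bounded on $\overline{{\cal D}}$, so the total switching rate is bounded by a finite constant $\bar\Lambda$. Next I would apply the It\^o--Dynkin formula to $s\mapsto v(s,X_s,I_s)$ along the joint dynamics of the diffusion $X$ in \eqref{eq:X-exploratory} and the CTMC $I$ with generator $\bm\pi$: this writes $v(t,X_t,I_t)$ as $v(0,X_0,I_0)$, plus $\int_0^t(\partial_t v+\mathcal L^{I_s}_xv)(s,X_s,I_s)\,ds$, plus the stochastic integral $\int_0^t(D_xv)^\top\sigma(s,X_s,I_s)\,dW_s$, plus the pure jump term $\sum_{k:\,\tau_k\le t}\bigl(v(\tau_k,X_{\tau_k},\kappa_k)-v(\tau_k,X_{\tau_k},\kappa_{k-1})\bigr)$.

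The key step is the substitution of the PDE \eqref{eq:V-pi}, which gives $\partial_t v+\mathcal L^{I_s}_xv+f+\lambda R(\bm\pi_s,I_s)=-\sum_{j\ne I_s}\pi^{I_s j}_s\bigl(v(s,X_s,j)-g_{I_s j}-v(s,X_s,I_s)\bigr)$ along the trajectory. Introducing the predictable compensator $\int_0^t\sum_{j\ne I_s}\pi^{I_s j}_s\bigl(v(s,X_s,j)-v(s,X_s,I_s)\bigr)ds$ of the jump sum above and the compensator $\int_0^t\sum_{j\ne I_s}\pi^{I_s j}_sg_{I_s j}\,ds$ of $\sum_{k:\,\tau_k\le t}g_{\kappa_{k-1}\kappa_k}$, and adding them into $M$, the three finite-variation contributions cancel identically, precisely because of the way $H_i$ and $R$ are defined. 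What is left is $M_t=v(0,X_0,I_0)+\int_0^t(D_xv)^\top\sigma\,dW_s+\widetilde M^1_t-\widetilde M^2_t$, where $\widetilde M^1$ and $\widetilde M^2$ are the two compensated jump processes; each is a local martingale, hence so is $M$.

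Finally, to pass from a local martingale to a square-integrable one I would estimate each term in $L^2$. Under Assumption \ref{assump:Lip} the standard bound $\mathbb E[\sup_{s\le T}|X_s|^2]<\infty$ holds, and combined with the boundedness of $D_xv$ and the linear growth of $\sigma$ it makes $\int_0^\cdot(D_xv)^\top\sigma\,dW$ a true $L^2$-martingale. Since the total switching rate is bounded by $\bar\Lambda$, the number $N_T$ of switches on $[0,T]$ is stochastically dominated by a Poisson variable with parameter $\bar\Lambda T$ (so in particular $\tau_k\to\infty$ almost surely and the infinite sum in \eqref{eq:M} is genuinely finite), while the jump amplitudes of $v$ are bounded by $2\sup_i\|v^{\bm\pi}_i\|_{C^0(\overline{{\cal D}})}$ and the switching costs by $\max_{i\ne j}g_{ij}$; this yields $\widetilde M^1_T,\widetilde M^2_T\in L^2$ and that $\widetilde M^1,\widetilde M^2$ are $L^2$-martingales, whence $M$ is a square-integrable martingale. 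I expect the main obstacle to be not any single estimate but the careful bookkeeping of the compensators of the marked point process of regime transitions together with the localization argument needed to remove the stopping times; a minor additional subtlety is that when $\bm\pi$ is merely measurable the It\^o step should be carried out via the It\^o--Krylov formula using the $W^{1,2}_p$-regularity of $v$, whereas for the policies produced by the iteration \eqref{eq:policy-n} the classical $C^{1,2}$ version applies directly.
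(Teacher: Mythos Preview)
Your proposal is correct and follows essentially the same route as the paper: apply It\^o's formula to $v(s,X_s,I_s)$, use the PDE \eqref{eq:V-pi} to cancel the finite-variation terms, and conclude that $M$ is a martingale. The only difference is one of presentation: the paper verifies the martingale property by computing $\mathbb E[M_t\mid\mathcal F_{t'}]$ directly (implicitly replacing the jump sums by their compensators under the conditional expectation and dropping the compensated-jump martingale in the It\^o expansion), whereas you keep the compensated jump martingales $\widetilde M^1,\widetilde M^2$ explicitly and obtain the decomposition $M_t=v(0,X_0,I_0)+\int_0^t(D_xv)^\top\sigma\,dW_s+\widetilde M^1_t-\widetilde M^2_t$; your version is more transparent and, unlike the paper, actually addresses the square-integrability claim via the $D_xv$ bound and the Poisson domination of $N_T$.
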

\begin{proof}
Using It\^o's rule to $v(s,X_s,I_s)$ from $t'$ to $t$, we obtain
\begin{align}\label{eq:v}
&v(t,X_t,I_t)\nonumber\\
&=v(t',X_{t'},I_{t'})+\int_{t'}^t (D_x v(s,X_s,I_s))^{\top}\sigma(s,X_{s},I_s)dW_{s}\nonumber\\
&\qquad+
\int_{t'}^t \int_{0}^{\infty}\left(v(s,X_{s-},I_{s-}+p(s,X_{s-}, I_{s-},z) )-v(\ell,X_{s-},I_{s}-)\right) \tilde{{\cal N}}(d s, d z)\nonumber\\
&\qquad+\int_{t'}^t \left(\frac{\partial v(s,X_s,I_s)}{\partial t}+\mathcal{L}^{I_s}_x v(s,X_{s},I_s)+\sum_{j\neq I_s}\left(\pi^{I_sj}(s,X_s)(v(s,X_{s},j)-v(s,X_{s},I_s))\right)\right)ds.
%&=v(t,x,i)+\int_0^t (D_x v(s,X_s,I_s))^{\top}\sigma(s,X_{s},I_s)dW_{s}\nonumber\\
%&\qquad-\int_0^t\left(f(s,X_{s},I_s)+\lambda R({\bm \pi}_s,I_s)\right)ds+\sum_{k=1}^\infty g_{\kappa_{k-1}\kappa_k}{\bf 1}_{\{\tau_k\leq t\}}.
\end{align}
It follows from  \eqref{eq:V-pi}, \eqref{eq:M} and \eqref{eq:v} that
\begin{align*}
&\Ex[M_t|\F_{t'}]\nonumber\\
&=\Ex\left[v(t,X_t,I_t)+\int_0^t\left(f(s,X_{s},I_s)+\lambda R({\bm \pi}(s,X_s),I_s)\right)ds-\sum_{k=1}^\infty g_{\kappa_{k-1}\kappa_k}{\bf 1}_{\{\tau_k\leq t\}}\Big|\F_{t'}\right]\nonumber\\
&=M_{t'}+\Ex\left[v(t,X_t,I_t)-v(t',X_{t'},I_{t'})\Big|\F_{t'}\right]\nonumber\\
&\quad+\Ex\left[\int_{t'}^t\left(f(s,X_{s},I_s)+\lambda R({\bm \pi}(s,X_s),I_s)\right)ds-\int_{t'}^t \sum_{j\neq I_s} g_{I_s j}\pi_s^{I_sj}ds\Big|\F_{t'}\right]\nonumber\\
&=M_{t'}+\Ex\left[\int_{t'}^t (D_x v(s,X_s,I_s))^{\top}\sigma(s,X_{s},I_s)dW_{s}\Big|\F_{t'}\right]\nonumber\\
&\quad+\Ex\left[\int_{t'}^t \int_{0}^{\infty}\left(v(s,X_{s-},I_{s-}+p(s,X_{s-}, I_{s-},z) )-v(s,X_{s-},I_{s-})\right) \tilde{N}(d s, d z)\Big|\F_{t'}\right]\nonumber\\
&=M_{t'}.
\end{align*}
Thus, we get the desired result.
\end{proof}

Let us introduce the notation $L^2([0,T])$ as the space of all processes $K = (K_t)_{t\in[0,T]}$ that $K$ is $\mathbb{F}$-progressively measurable satisfying $\Ex[\int_0^T |K_t|^2 dt]<\infty$ and $\Ex[\left< K\right>_T]<\infty$.  For any semimartingale $R=(R_s)_{s\in[0,T]}$, we denote $L^2([0,T];R)$ the space of all processes $K = (K_t)_{t\in[0,T]}$ that $K$ is $\mathbb{F}$-progressively measurable and satisfies
\begin{align*}
\mathbb{E}\left[\int_0^T |K_t|^2 d\left<R\right>_t\right] < \infty,
\end{align*}
where $\left<R\right>_t$ is the quadratic variation process of $R$. We have the following result.

\begin{proposition}\label{prop:orthogonality}
A diffusion process $R=(R_s)_{s\in[0,T]}\in L^2([0,T])$ is a martingale if and only if
\begin{align}\label{eq:orthogonality}
\Ex\left[\int_0^{T}\varsigma_tdR_t\right]=0
\end{align}
 for any $\varsigma\in L^2([0,T];R)$.
\end{proposition}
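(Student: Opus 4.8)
The statement to prove is Proposition~\ref{prop:orthogonality}, namely that a diffusion process $N$ is a martingale if and only if $\Ex[\int_0^\infty \varsigma_t\,\D N_t]=0$ for every $\varsigma\in L^2([0,T];N)$.

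The plan is to prove the two implications separately. The forward direction (martingale $\Rightarrow$ orthogonality) is the routine one: if $N$ is a (square-integrable) martingale, then for any $\varsigma\in L^2([0,T];N)$ the stochastic integral $\int_0^{\cdot}\varsigma_s\,\D N_s$ is well-defined and is itself a martingale with vanishing expectation, since the integrability condition $\Ex[\int_0^T |\varsigma_t|^2\,\D\langle N\rangle_t]<\infty$ is exactly what guarantees the stochastic integral is a true (not merely local) martingale starting at $0$; hence its expectation is $0$. Here I would be slightly careful about the fact that $N$, being a diffusion, is a continuous semimartingale, so $\langle N\rangle$ is the relevant bracket and It\^o integration applies directly; the integral over $[0,\infty)$ should be read as over $[0,T]$ (or with the convention that $N$ is stopped/constant after $T$), consistent with the definition of $L^2([0,T];N)$ given just above.

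The reverse direction (orthogonality $\Rightarrow$ martingale) is where the real argument lies. Write the canonical (Doob--Meyer / semimartingale) decomposition $N_t = N_0 + A_t + L_t$, where $A$ is a continuous finite-variation (predictable) process with $A_0=0$ and $L$ is a continuous local martingale with $L_0=0$; for a diffusion this is just the drift--plus--martingale decomposition. The goal is to show $A\equiv 0$. The key idea is to feed into the orthogonality condition a test process built from $A$ itself. Concretely, one chooses $\varsigma_t = \mathrm{sgn}(\dot A_t)\mathbf{1}_{[0,s]}(t)$ or, to stay within the $L^2$ class and avoid issues when $\langle N\rangle$ and $\D A$ are mutually singular or $A$ is not absolutely continuous, one localizes: pick $\varsigma_t$ of the form $\mathbf{1}_{\{|A_t - A_s|\le R\}}\cdot(\text{approximation of }\mathrm{sgn}\,\D A)$ on a bounded time interval, so that $\varsigma$ is bounded and $\mathbb{F}$-progressively measurable, hence trivially in $L^2([0,T];N)$. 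Then $\int_0^\infty \varsigma_t\,\D N_t = \int_0^\infty \varsigma_t\,\D A_t + \int_0^\infty \varsigma_t\,\D L_t$; the second term has zero expectation (bounded integrand, local martingale, optional stopping / dominated convergence to pass from the localizing sequence), while the first term, by the choice of $\varsigma$, equals $\Ex[\int_0^\infty |\D A_t|]$ up to the truncation, i.e. the expected total variation of $A$. The orthogonality hypothesis forces this to be $0$, so $A$ has zero total variation a.s., i.e. $N = N_0 + L$ is a local martingale; the square-integrability hypothesis implicit in the setup (or a further localization plus the $L^2$ bound) upgrades it to a true martingale. Alternatively — and this is cleaner for a continuous diffusion — one takes $\varsigma$ to be (a bounded progressively measurable approximation of) $\D A/\D\langle N\rangle$ when $\D A \ll \D\langle N\rangle$, reducing the problem to showing $\Ex[\int_0^\infty (\D A_t/\D\langle N\rangle_t)^2\,\D\langle N\rangle_t]=0$; since this reference is cited from \cite{jia2022policy}, I would simply invoke their argument and adapt the test-function choice to the present continuous-diffusion setting.

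The main obstacle is the reverse implication, and within it the delicate point is the choice of test process: it must simultaneously (i) lie in $L^2([0,T];N)$, (ii) be $\mathbb{F}$-progressively (or predictably) measurable, and (iii) be correlated with the drift $A$ in a way that makes $\Ex[\int \varsigma\,\D A]$ detect any nonzero finite-variation part, while keeping $\Ex[\int \varsigma\,\D L]=0$ under appropriate localization. Handling the case where $A$ is merely of bounded variation (not absolutely continuous with respect to $\langle N\rangle$) requires the truncation/localization carefully, and passing limits through the expectations needs a dominated-convergence justification using the boundedness of the truncated integrands together with the $L^2(\D\langle N\rangle)$ control. Since the result is quoted verbatim as Proposition~4 of \cite{jia2022policy}, in the paper itself I would state it and cite that reference rather than reproduce the full localization argument.
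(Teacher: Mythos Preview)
The paper does not actually prove this proposition: it is stated with the attribution ``Proposition~4 in \cite{jia2022policy}'' and no proof is given. Your closing remark --- that in the paper one would simply cite the reference --- is exactly what the authors do, so on that level your proposal matches the paper's treatment.

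That said, your sketch of the reverse implication is more elaborate than needed and carries some avoidable technical baggage. Rather than decomposing $N = N_0 + A + L$ and engineering a test process from $\mathrm{sgn}(\dot A_t)$ or $\D A/\D\langle N\rangle$ (which, as you note, raises absolute-continuity and localization issues), the standard and much shorter route is to test against elementary indicators: if $N$ is not a martingale, there exist $0\le s<t\le T$ and $B\in\F_s$ with $\Ex[\mathbf{1}_B(N_t-N_s)]\neq 0$; then $\varsigma_u := \mathbf{1}_B\,\mathbf{1}_{(s,t]}(u)$ is bounded and $\Fx$-progressively measurable, hence in $L^2([0,T];N)$, and
\[
\Ex\!\left[\int_0^T \varsigma_u\,\D N_u\right]=\Ex[\mathbf{1}_B(N_t-N_s)]\neq 0,
\]
contradicting the orthogonality hypothesis. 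This avoids the semimartingale decomposition entirely and sidesteps the delicate points (i)--(iii) you list. Your approach is not wrong for a genuine diffusion (where the drift \emph{is} absolutely continuous in $t$), but it introduces localization and dominated-convergence steps that the indicator argument renders unnecessary.
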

\begin{proof}
As $R$ is a diffusion process, it admits the stochastic differential representation
\begin{align*}
R_t=R_0+\int_0^t A_sds+\int_0^t M_sdB_s,\quad t\in[0,T],
\end{align*}
where $B = (B_t)_{t\in[0,T]}$ is a $\mathbb{F}$-adapted scalar Brownian motion, and  $A=(A_t)_{t\in[0,T]}$ and $M=(M_t)_{t\in[0,T]}$ are $\mathbb{F}$-progressively measurable processes. If $R$ is a martingale, then its drift vanishes, i.e., $A\equiv0$. Now take any  $\varsigma\in L^2([0,T];R)$. Then
\begin{align*}
\mathbb{E}\left[\int_0^T |\varsigma_tM_t|^2 dt\right]=\mathbb{E}\left[\int_0^T |\varsigma_t|^2 d\left<R\right>_t\right]< \infty.
\end{align*}
This implies that $(\int_0^t \varsigma_sM_sdB_s)_{t\in[0,T]}$ is a martingale; consequently,
\begin{align*}
\Ex\left[\int_0^{T}\varsigma_tdR_t\right]=\Ex\left[\int_0^{T}\varsigma_tM_tdB_t\right]=0.
\end{align*}
Conversely, assume that \eqref{eq:orthogonality} holds for every $\varsigma\in L^2([0,T];R)$. For any $t,t'$ with $0\leq t\leq t'\leq T$, define
\begin{align*}
\varsigma_s = \begin{cases}
\operatorname{sgn}(A_s), & s \in [t, t'], \\
0, & \text{otherwise}.
\end{cases}
\end{align*}
 As $|\varsigma|\leq 1$ for all $t\in[0,T]$, we have $\varsigma\in L^2([0,T];R)$. Then
\begin{align*}
0=\Ex\left[\int_0^{T}\varsigma_sdR_s\right]=\Ex\left[\int_t^{t'}|A_s|ds\right]+\Ex\left[\int_t^{t'}\varsigma_sM_sdB_s\right]=\Ex\left[\int_t^{t'}|A_s|ds\right],
\end{align*}
where the last equality follows because $(\int_t^\ell \varsigma_sM_sdB_s)_{\ell\in[t,t']}$ is a martingale. By the arbitrariness of $t$ and $t'$, we conclude that $A_s=0$ a.s. for every $s\in[0,T]$. Hence $N$ is a martingale, which completes the proof.
\end{proof}
Proposition \ref{prop:orthogonality} is known as the martingale orthogonality condition (see, e.g., \cite{jia2022policy,jia2023q}). It provides a powerful characterization of martingales within the class of diffusion processes: a process is a martingale if and only if its stochastic integral against every admissible test process has zero expectation. This condition serves the theoretical backbone of our policy evaluation RL algorithm. 

Given a feedback strategy ${\bm \pi}(t,x)=(\pi_{ij}(t,x))_{i,j\in \mathbb{I}_m}$, we parameterize the value function by a neural network $v^\xi(t,x,i)$ that is differentiable with respect to the parameter vector $\xi$ and satisfies $v^{\xi}(T,x,i)=h(x)$. Here $\xi \in \Theta \subset \mathbb{R}^{L_\xi}$ and $L_{\xi}$ is the dimension of the parameter vector.  The term $\frac{\partial v^{\xi}}{\partial \xi}$ denotes the gradient of the neural network output with respect to its parameters.

Let $M^{\xi}=(M_t^{\xi})_{t\in[0,T]}$ be the parameterized version of the martingale process $M$ defined in \eqref{eq:M}.  Our goal is to find the optimal parameters $\xi$ such that $M^\xi$ is as close as possible to a true martingale. Ideally, we would like $M^\xi$ to be an exact martingale; then, by Proposition~\ref{prop:orthogonality}, the orthogonality condition
\begin{align}\label{eq:orthogonality-M}
\Ex\left[\int_0^{T}\varsigma_tdM^{\xi}_t\right]=0
\end{align}
must hold for every test process $\varsigma\in L^2([0,T];M)$. Equation \eqref{eq:orthogonality-M} can be viewed as a (infinite-dimensional) family of equations to determine the parameter $\xi$. We refer to this as the martingale orthogonality equation. In principle, solving the full set of equations would require verifying the condition for all $\varsigma$, which is numerically infeasible. For practical approximation, we select a finite collection of test processes that are sufficiently rich to determine $\xi$. Because we have $L_\xi$ unknown parameters, we need at least $L_\xi$ independent equations. A natural choice is to take the test process proportional to the gradient of the approximate value function with respect to the parameters, namely $\varsigma_t = \frac{\partial v^{\xi}}{\partial \xi}(t,X_t,I_t)$.  

To solve the martingale orthogonality equations, we  apply the stochastic approximation method (see, e.g., \cite{robbins1951stochastic,jia2022policy}) --- a class of iterative algorithms for finding roots of equations or optimizing objectives when only noisy measurements are available. Substituting the chosen test process 
$\varsigma=\frac{\partial v^{\xi}}{\partial \xi}(t,X_t,I_t)$ into the orthogonality condition and applying  stochastic approximation yields the  updating rule
\begin{align}\label{eq:SA}
 \xi \leftarrow \xi+\alpha_\xi \int_0^{T} \frac{\partial v^{\xi}}{\partial \xi}(s,X_s,I_s)dM_s^{\xi},
\end{align}
where $\alpha_\xi>0$ is the learning rate. 

However, the update rule \eqref{eq:SA} involves a continuous-time integral that cannot be directly implemented computationally. To address this, we adopt a discrete-time approximation of the martingale orthogonality condition. Let $K\in\mathbb{N}$ be the number of time intervals and $\Delta t=T/K$ be the step size. Consider the discretization grid $0=t_0<t_1<t_2<\cdots<t_K=T$ with $t_k-t_{k-1}=\Delta t$ for $k=1,\dots,K$. We have
\begin{align*}
\int_0^T \varsigma_t \, dM_t^\xi \approx \sum_{k=0}^{K-1} \varsigma_{t_k} \, \Delta M_{t_k}^\xi,
\end{align*}
where \(\Delta M_{t_k}^\xi = M_{t_{k+1}}^\xi - M_{t_k}^\xi\). Expanding \(\Delta M_{t_k}^\xi\) by the definition of \(M^\xi\) gives
\begin{align}\label{eq:delta-xi}
&\Delta M_{t_k}^\xi = M_{t_{k+1}}^\xi - M_{t_k}^\xi\nonumber\\
&=v^{\xi}(t_{k+1},X_{t_{k+1}},I_{t_{k+1}})-v^{\xi}(t_k,X_{t_k},I_{t_k})+\int_{t_k}^{t_{k+1}}\left(f(s,X_{s},I_{s})+\lambda R({\bm \pi}^{\xi}_{s},I_{s})\right)ds\nonumber\\
&\qquad-\sum_{j=1}^\infty g_{\kappa_{j-1}\kappa_j}{\bf 1}_{\{t_k<\tau_j\leq t_{k+1}\}}\nonumber\\
&\approx v^{\xi}(t_{k+1},X_{t_{k+1}},I_{t_{k+1}})-v^{\xi}(t_k,X_{t_k},I_{t_k})+\left(f(t_k,X_{t_k},I_{t_k})+\lambda R({\bm \pi}^{\xi}_{t_k},I_{t_k})\right)\Delta t- g_{I_{t_{k}}I_{t_{k+1}}}\nonumber\\
&:=\Delta \xi_k,
\end{align}
where the parameterized  strategy ${\bm \pi}^{\xi}(t,x)=(\pi^{\xi}_{ij}(t,x))_{i,j\in \mathbb{I}_m}$ is given by
\begin{align}\label{eq:policy-xi}
    \pi_{ij}^{\xi}(t,x)=\exp\left(\frac{v^\xi(t,x,j)-g_{ij}-v^{\xi}(t,x,i)}{\lambda}\right),  ~j\neq i,
\end{align}
and $\pi_{ii}^{\xi}(t,x)=-\sum_{j\neq i}\pi_{ij}^{\xi}(t,x)$. Here, we choose the time step $\Delta t$ to be sufficiently small relative to the transition rates of the continuous-time Markov chain $I_t$. Under this assumption, the probability of two or more jumps occurring within any single interval $(t_k, t_{k+1}]$ is of order $O((\Delta t)^2)$ and is hence negligible for small $\Delta t$. That is, we can assume that at most one regime switch occurs during each time step. Thus, we replace the infinite sum over jump times by the single switching cost $g_{I_{t_k} I_{t_{k+1}}}$ in \eqref{eq:delta-xi}.
Substituting this into the discretized integral yields the parameter update
\begin{align}\label{eq:offline}
\xi \leftarrow \xi + \alpha_\xi \sum_{k=0}^{K-1} \varsigma_{t_k} \, \Delta \xi_k.
\end{align}
Note that this is an offline update: the entire trajectory over $[0,T]$ is collected first, all $\Delta \xi_k$ are computed, and then the parameter is updated once per episode. Alternatively, one can update the parameter in an online fashion at each time step as soon as $\Delta \xi_k$ becomes available:
\begin{align*}
\xi \leftarrow \xi + \alpha_\xi \, \varsigma_{t_k} \, \Delta \xi_k.
\end{align*}
The online update does not wait for the end of the episode; it uses data incrementally, which has better data efficiency, albeit it may introduce more noise per update.

%Motivated by the continuous-time update \eqref{eq:SA}, we choose the test process $\varsigma_t=\frac{\partial v^{\xi}}{\partial \xi}(t,X_t,I_t)$ and propose the following discretized update rule to update parameters after a whole episode (offline):
%\begin{align}\label{eq:offline}
% \xi \leftarrow \xi+\alpha_\xi \sum_{k=0}^{K-1} \frac{\partial v^{\xi}}{\partial \xi}(t_k,X_{t_k},I_{t_k})\Delta \xi_k
%\end{align}
%or to update parameters at every time step (online):
%\begin{align}\label{eq:online}
% \xi \leftarrow \xi+\alpha_\xi  \frac{\partial v^{\xi}}{\partial \xi}(t_k,X_{t_k},I_{t_k})\Delta \xi_k.
%\end{align}
%Here $\Delta \xi_k$ for $k=0,1,,..,K-1$ is given by
%\begin{align}\label{eq:delta-xi}
%\Delta \xi_k=v^{\xi}(t_{k+1},X_{t_{k+1}},I_{t_{k+1}})-v^{\xi}(t_k,X_{t_k},I_{t_k})+\left(f(t_k,X_{t_k},I_{t_k})+\lambda R({\bm \pi}^{\xi}_{t_k},I_{t_k})\right)\Delta t- g_{I_{t_{k}}I_{t_{k+1}}},
%\end{align}

Based on the above updating rules, we can present the pseudo-code of the offline PE algorithm in Algorithm \ref{Alg:PE}. The online PE algorithm can be devised in a similar fashion and is omitted.

	\begin{algorithm}[h]
			\caption{\textbf{Policy Evaluation Algorithm (Offline)}}
		    \label{Alg:PE}
			\hspace*{0.02in} {\bf Input:} 
			 Initial state $(x_0,i_0)$, horizon $T$, number of regimes $m$, time step $\Delta t$, number of episodes $N$, number of mesh grids $K$, initial learning rates $\alpha_\xi(\cdot)$  (a function of the number of episodes), functional forms of parameterized value function $v^\xi(\cdot)$, policy $\boldsymbol{\pi}^\xi(\cdot)$, regime switching costs $(g_{ij})_{i,j\in \mathbb{I}_m}$ and temperature parameter $\lambda$.\\
			\hspace*{0.02in} {\bf Required Program:} an environment simulator $(x^{\prime}, i^{\prime},f^{\prime})=$ Environment $_{\Delta t}(t,x,i,j)$ that takes current time-state pair $(t,x,i)$ and action $j$ (the regime to switch to; if $j = i$, no switching occurs) as inputs and generates state $x^{\prime}$, $i^{\prime}=j$ and reward $f'$ at time $t+\Delta t$ as outputs . \\
			\hspace*{0.02in} {\bf Learning Procedure:}
			\begin{algorithmic}[1]
\State Initialize $\xi$, and $\ell=1$. 
				\While{$\ell<N$}  
				\State Initialize $k = 0$.  Observe initial state $x_0,i_0$ and store $(x_{t_0},i_{t_0}) \leftarrow (x_0,i_0)$.
				\While{$k < K$}
				  
				    \State \ \ \ \ \    Generate action $j_{t_{k}}$ by $\bm{\pi}^\xi\left(t_{k},x_{t_{k}}\right)$.
                   \State \ \ \ \ \   Apply $j_{t_{k}}$ to environment simulator $(x, i,f)=$ Environment  $_{\Delta t}(t_k, x_{t_k},i_{t_k},j_{t_{k}})$.
                   \State \ \ \ \ \   Observe  new state $x$ and $i$ as output. Store $x_{t_{k+1}} \leftarrow x$,  $ i_{t_{k+1}} \leftarrow  i$ and $f_{t_k} \leftarrow  f$. 
                   \State \ \ \ \ \  Update  $k \leftarrow k+1$. 
                \EndWhile{} 
                \State  For every $k=0,1,...,K-1$, compute 
\begin{align*}
 \Delta \xi_{k}&=v^{\xi}(t_{k+1},x_{t_{k+1}},i_{t_{k+1}})-v^{\xi}(t_k,x_{t_k},i_{t_k})+\left(f_{t_k}+\lambda R({\bm \pi}^{\xi}(t_k,x_{t_k}),i_{t_k})\right)\Delta t- g_{i_{t_{k}}i_{t_{k+1}}}.
\end{align*}

				\State Update $\xi$ by
\begin{align*}
\xi &\leftarrow \xi+\alpha_\xi(\ell+1) \sum_{k=0}^{K-1}  \frac{\partial v^\xi}{\partial \xi}\left(t_k,x_{t_k},i_{t_k}\right)\Delta \xi_{k}, \\
\end{align*}
               \State   Update $\ell \leftarrow \ell+1$.
				          
              \EndWhile{}                
    \end{algorithmic}
\end{algorithm}

Proposition \ref{thm:improvement} and Theorem \ref{thm:convergence} confirm the improvement and convergence results of the policy iteration. Meanwhile, Lemma \ref{lem:martingale} and Proposition \ref{prop:orthogonality} show that policy evaluation can be performed by solving the martingale orthogonality condition via stochastic approximation. A natural question arises: what can be said about the convergence of Algorithm \ref{Alg:PE}? To address this, we next turn to an analysis of the error estimates for Algorithm \ref{Alg:PE}.

We reformulate the update rule in equation \eqref{eq:offline} as follows:
\begin{align}\label{eq:offline-ell}
\xi_{\ell+1} \leftarrow \xi_\ell + \alpha_\xi(\ell+1) \Psi(\xi_\ell; X, I, {\bm \pi}^{\xi_\ell}), \quad \text{for iteration step}~\ell \geq 1,
\end{align}
where
\begin{align*}
\Psi(\xi_\ell; X, I, {\bm \pi}^{\xi_\ell}) = \sum_{k=0}^{K-1} \frac{\partial v^{\xi}}{\partial \xi}(t_k, X_{t_k}, I_{t_k}) \Delta \xi_k,
\end{align*}
with $\Delta \xi_k$ defined in equation \eqref{eq:delta-xi}. For notational convenience, we introduce the shorthand $Y_{\ell+1} = (X, I, {\bm \pi}^{\xi_\ell})$ for $\ell \geq 1$. We further define the expected update function as $\psi(\xi) := \mathbb{E}[\Psi(\xi; Y)]$. To establish convergence guarantees, we make the following technical assumptions.

\begin{assumption}\label{assump:convergence}
\begin{itemize}
\item[(i)] The ordinary differential equation $\xi'(t) = \psi(\xi(t))$ has a unique stable equilibrium point $\xi^*$.
\item[(ii)] There exists a constant $C > 0$ such that $\mathbb{E}[|\Psi(\xi_{\ell}; Y_{\ell+1})|^2 | \xi_\ell] \leq C(1 + |\xi_\ell|^2)$ for all iterations.
\item[(iii)] There exists $\kappa > 0$ such that $(\xi - \xi^*) \cdot \psi(\xi) \leq -\kappa|\xi - \xi^*|^2$ for all $\xi \in \mathbb{R}^{L_\xi}$.
\item[(iv)] There exist constants $\rho, C > 0$ such that $\sup_{j\in\mathbb{I}_m}|v^{\xi}(\cdot,j)- v^{\xi^*}(\cdot,j)|_{C^0(\overline{{\cal D}})} \leq C|\xi - \xi^*|^{\rho}$ for all $\xi \in \mathbb{R}^{L_\xi}$.
\end{itemize}
\end{assumption}

Under these conditions, we now present the main convergence result, which provides the explicit error bound for Algorithm \ref{Alg:PE}.
\begin{theorem}\label{thm:error}
Let Assumption \ref{assump:convergence} hold. Set $\alpha_\xi(\ell)=\frac{A}{\ell^{\nu}+B}$ for some $\nu\in(0, 1]$, $A>\frac{\nu}{2\kappa}$ and $B>0$, and let $\epsilon>0$. Then there exists $C>0$ (independent of $n,\epsilon$) such that with probability of at least $1-\epsilon$, 
\begin{align}\label{eq:error}
\sup_{j\in\mathbb{I}_m}|v^{\xi_\ell}(\cdot,j)-v(\cdot,j)|_{C^0({\overline{{\cal D}}})}\leq \sup_{j\in\mathbb{I}_m}|v(\cdot,j)-v^{\xi^*}(\cdot,j)|_{C^0({\overline{{\cal D}}})}+\frac{C}{\epsilon^{\rho/2}}\ell^{-\frac{\nu\rho}{2}}.
\end{align}
\end{theorem}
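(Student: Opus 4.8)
The plan is to decompose the left-hand side of \eqref{eq:error} via the triangle inequality into a bias term and a parameter-convergence term:
\begin{align*}
\sup_{j\in\mathbb{I}_m}|v^{\xi_i}(\cdot,j)-v(\cdot,j)|_{C^0(\overline{{\cal D}})}
\leq \sup_{j\in\mathbb{I}_m}|v(\cdot,j)-v^{\xi^*}(\cdot,j)|_{C^0(\overline{{\cal D}})}
+\sup_{j\in\mathbb{I}_m}|v^{\xi_i}(\cdot,j)-v^{\xi^*}(\cdot,j)|_{C^0(\overline{{\cal D}})}.
\end{align*}
The first term is exactly the approximation error appearing on the right of \eqref{eq:error} and needs no further work; it records the unavoidable bias from using the parametric family $v^{\xi}$. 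By Assumption \ref{assump:convergence}(iv), the second term is bounded by $C|\xi_i-\xi^*|^{\rho}$, so the whole argument reduces to a high-probability rate estimate for $|\xi_i-\xi^*|$.

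Next I would treat $(\xi_i)_{i\geq 1}$ as a Robbins--Monro stochastic approximation scheme for the ODE $\xi'(t)=\psi(\xi(t))$, writing the update as $\xi_{i+1}=\xi_i+\alpha_\xi(i)\big(\psi(\xi_i)+M_{i+1}\big)$ where $M_{i+1}:=\Psi(\xi_i;Y_{i+1})-\psi(\xi_i)$ is a martingale-difference noise with respect to the natural filtration. The steps are: (a) expand $|\xi_{i+1}-\xi^*|^2$, take conditional expectation, and use the strong monotonicity Assumption \ref{assump:convergence}(iii), $(\xi-\xi^*)\cdot\psi(\xi)\leq-\kappa|\xi-\xi^*|^2$, together with the growth bound \ref{assump:convergence}(ii), $\mathbb{E}[|\Psi(\xi_i;Y_{i+1})|^2\mid\xi_i]\leq C(1+|\xi_i|^2)$, to obtain a recursion of the form $\mathbb{E}[|\xi_{i+1}-\xi^*|^2\mid\xi_i]\leq(1-2\kappa\alpha_\xi(i)+C\alpha_\xi(i)^2)|\xi_i-\xi^*|^2+C\alpha_\xi(i)^2$; (b) take full expectations and solve this scalar recursion with the choice $\alpha_\xi(i)=A/(i^{\nu}+B)$ and $A>\nu/(2\kappa)$, which is the classical condition ensuring the linear contraction dominates, yielding $\mathbb{E}[|\xi_i-\xi^*|^2]\leq C\,i^{-\nu}$ (this is the standard $a_i=A/i^\nu$ Chung-type lemma — first establish an a priori $L^2$ bound $\sup_i\mathbb{E}|\xi_i|^2<\infty$ to linearize the recursion); (c) convert the $L^2$ bound into a high-probability statement via Markov's inequality: $\mathbb{P}(|\xi_i-\xi^*|^2>\mathbb{E}[|\xi_i-\xi^*|^2]/\epsilon)\leq\epsilon$, so with probability at least $1-\epsilon$ one has $|\xi_i-\xi^*|\leq C\epsilon^{-1/2}i^{-\nu/2}$; (d) raise to the power $\rho$ and combine with \ref{assump:convergence}(iv) to get $\sup_j|v^{\xi_i}(\cdot,j)-v^{\xi^*}(\cdot,j)|_{C^0(\overline{{\cal D}})}\leq C\epsilon^{-\rho/2}i^{-\nu\rho/2}$, which is precisely the second term on the right of \eqref{eq:error}.

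I expect the main obstacle to be step (b), namely rigorously solving the stochastic recursion with the non-standard step size $\alpha_\xi(i)=A/(i^\nu+B)$ for general $\nu\le 1$ and extracting the sharp rate $i^{-\nu}$ with the stated constant constraint $A>\nu/(2\kappa)$. One must be careful that the $C\alpha_\xi(i)^2$ residual term (of order $i^{-2\nu}$) does not dominate: when $\nu<1$ it is automatically lower order, but the contraction factor $(1-2\kappa\alpha_\xi(i))$ only accumulates to a polynomial rather than exponential decay, so one needs the Chung-lemma comparison $\prod_{k}(1-2\kappa A/k^\nu)\lesssim\exp(-2\kappa A\sum_k k^{-\nu})$ and a careful summation-by-parts / telescoping argument to balance the two terms; the constraint $A>\nu/(2\kappa)$ is exactly what makes the bias decay at the advertised rate. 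A secondary technical point is justifying the a priori $L^2$ boundedness $\sup_i\mathbb{E}|\xi_i|^2<\infty$ needed to even start the linearized recursion — this follows from Assumption \ref{assump:convergence}(i)--(iii) by a standard Lyapunov argument, but it must be dispatched before the rate computation. Everything else — the triangle-inequality split, the Markov inequality conversion, and the final substitution into \ref{assump:convergence}(iv) — is routine.
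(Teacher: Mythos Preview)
Your proposal is correct and follows essentially the same route as the paper: the triangle-inequality split into bias plus parameter error, the $L^2$ rate $\mathbb{E}[|\xi_i-\xi^*|^2]\leq Ci^{-\nu}$, Markov's inequality to pass to a high-probability bound, and then Assumption~\ref{assump:convergence}(iv). The only difference is that the paper obtains the $L^2$ rate by invoking a standard stochastic-approximation result (Theorem~22 in \cite{benveniste2012adaptive}) rather than redoing the Chung-lemma recursion you sketch in step~(b); your derivation is the content of that cited theorem.
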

\begin{proof}
It follows from \eqref{eq:offline-ell} and Assumption \ref{assump:convergence}-(ii),(iii) that
\begin{align}\label{SA-1}
\Ex\left[|\xi_{\ell+1}-\xi^*|^2 \mid \xi_\ell\right] 
&=\Ex\left[|\xi_{\ell}-\xi^*-\xi_{\ell}+\xi_{\ell+1}|^2 \mid \xi_\ell\right]\nonumber\\
&\leq|\xi_{\ell}-\xi^*|^2 +2|\xi_{\ell}-\xi^*|\Ex\left[|\xi_{\ell}-\xi_{\ell+1}| \mid \xi_\ell\right]+\Ex\left[|\xi_{\ell}-\xi_{\ell+1}|^2 \mid \xi_\ell\right]\nonumber\\
&=|\xi_{\ell}-\xi^*|^2+2\alpha_\xi(\ell+1)(\xi_{\ell}-\xi^*)\psi(\xi)+\mathbb{E}[|\Psi(\xi_{\ell}; Y_{\ell+1})|^2\mid\xi_{\ell}] \nonumber\\
&\leq  (1-2\alpha_\xi(\ell+1)\kappa)|\xi_{\ell} - \xi^*|^2+C\alpha^2_\xi(\ell+1)(1 + |\xi_\ell|^2)\nonumber\\
  &\leq (1-2\alpha_\xi(\ell+1)\kappa)|\xi_{\ell} - \xi^*|^2+C\alpha^2_\xi(\ell+1)(1 +2 |\xi_\ell-\xi^*|^2+2|\xi^*|^2)\nonumber\\
&\leq (1-2\alpha_\xi(\ell+1)\kappa+2C\alpha^2_\xi(\ell+1))|\xi_{\ell} - \xi^*|^2+C\alpha^2_\xi(\ell),
\end{align}
where \(C>0\) is a generic constant independent of \(n,\epsilon\), which may differ from line to line.

Taking expectations on both sides of \eqref{SA-1}, we obtain
\begin{align}\label{SA-2}
\Ex\left[|\xi_{\ell+1}-\xi^*|^2\right] \leq (1-2\alpha_\xi(\ell+1)\kappa+2C\alpha^2_\xi(\ell+1))\Ex\left[|\xi_{\ell} - \xi^*|^2\right]+C\alpha^2_\xi(\ell).
\end{align}
As \(\alpha_\xi(\ell)=\frac{A}{\ell^{\nu}+B}\) with \(\nu\in(0, 1]\), \(A>\frac{\nu}{2\kappa}\), and \(B>0\), it follows from Lemma 23 in Chapter 1 of Part II of  \cite{benveniste2012adaptive} that there exist constants \(C_0>0\) and \(\ell_0\geq 1\) such that, for any \(c\geq C_0\) and \(\ell\geq \ell_0\), we have \(1-2\alpha_\xi(\ell+1)\kappa\geq 0\) and
\begin{align}\label{SA-3}
\tilde{\alpha}(\ell+1)\geq (1-2\alpha_\xi(\ell+1)\kappa+2C\alpha^2_\xi(\ell+1))\tilde{\alpha}(\ell)+C\alpha^2_\xi(\ell),
\end{align}
where \(\tilde{\alpha}(\ell)=c \alpha(\ell)\) for \(\ell\geq 1\).

Choose \(c\geq C_0\) such that \(\Ex\left[|\xi_{\ell}-\xi^*|^2\right]\leq c\alpha_\xi(\ell)\) for all \(\ell\leq \ell_0\). For \(\ell\geq \ell_0\), applying \eqref{SA-2} and \eqref{SA-3} and arguing by induction on \(\ell\), we get immediately that \(\Ex\left[|\xi_{\ell}-\xi^*|^2\right]\leq c\alpha_\xi(\ell)\). Thus, it holds that
\begin{align*}
\Ex[|\xi_\ell-\xi^*|^2]\leq C\ell^{-\nu},
\end{align*}
where \(C > 0\) is a constant independent of \(\ell\). This bound in turn implies that
 \begin{align*}
|\xi_\ell-\xi^*|\leq C\epsilon^{-\frac{1}{2}}\ell^{-\frac{\nu}{2}}
\end{align*}
with probability at least $1 - \epsilon$. Then, invoking Assumption \ref{assump:convergence}-(iv), we deduce that with probability at least $1 - \epsilon$,
\begin{align*}
&\sup_{j\in\mathbb{I}_m}|v^{\xi_\ell}(\cdot,j)-v(\cdot,j)|_{C^0({\overline{{\cal D}}})}\\
&\leq \sup_{j\in\mathbb{I}_m}|v(\cdot,j)-v^{\xi^*}(\cdot,j)|_{C^0({\overline{{\cal D}}})}+\sup_{j\in\mathbb{I}_m}|v^{\xi^*}(\cdot,j)-v^{\xi_\ell}(\cdot,j)|_{C^0({\overline{{\cal D}}})}\\
&\leq \sup_{j\in\mathbb{I}_m}|v^{\xi^*}(\cdot,j)-v(\cdot,j)|_{C^0({\overline{{\cal D}}})}+\frac{C}{\epsilon^{\rho/2}}\ell^{-\frac{\nu\rho}{2}}.
\end{align*}
This completes the proof. 
\end{proof}

Theorem \ref{thm:error} establishes a comprehensive error analysis for Algorithm \ref{Alg:PE}, providing both theoretical guarantees and practical insights into its convergence behavior.  Assumption \ref{assump:convergence}-(i) ensures that the unique stable equilibrium $\xi^*$ is the only possible candidate for convergence of the stochastic approximation algorithm (c.f. Theorem 7 in Chapter of Part I of \cite{benveniste2012adaptive}). Assumptions~\ref{assump:convergence}-(ii) and (iii), together with the specific structure of the learning rate in Theorem~\ref{thm:error}, are crucial for characterizing the convergence rate of the stochastic approximation. Assumption~\ref{assump:convergence}-(iv) imposes a Hölder-type regularity condition on the function approximator, connecting the parameter-space error with the function-space error.

The result demonstrates that the policy evaluation error can be systematically decomposed into two distinct components: the approximation error of the parametric function class and the algorithmic error arising from the stochastic approximation procedure. The first term, $\sup_{j\in\mathbb{I}_m}|v(\cdot,j)-v^{\xi^*}(\cdot,j)|_{C^0({\overline{{\cal D}}})}$, represents the inherent approximation capability of our chosen parametric family. This bias term is independent of the learning algorithm and reflects how well the optimal parameter $\xi^*$ can approximate the true value function within the selected function class. The second term, $C\ell^{-\frac{\nu\rho}{2}}/\epsilon^{\rho/2}$, exhibits a polynomial decay with respect to the iteration number $i$ and vanishes asymptotically as the number of iterations increases, demonstrating the algorithm's convergence to the optimal parameter configuration within the chosen function class.

\section{Numerical Examples}\label{sec:numerical}
This section presents some numerical experiments to demonstrate the practical efficacy of the proposed RL algorithm. We first examine a bounded regulator problem to analyze the algorithm's convergence property and policy behavior. Subsequently, we apply the algorithm to a put option selection problem involving the optimal switching between risky assets, showcasing its effectiveness in a more complex, multi-regime setting with some financial interpretations.

\subsection{Bounded Regulator Problem}
To establish a performance benchmark for our algorithm, we consider a finite-horizon optimal switching problem with two regimes, conceptualized as a bounded regulator. This classic problem provides a tractable yet non-trivial testbed where the optimal policy has an intuitive structure, allowing for clear interpretation of the algorithm's learned strategy.

The system state $X = (X_t)_{t \in [0, T]}$ evolves according to regime-specific stochastic dynamics:
\begin{align}\label{eq:example1-X}
dX_t = \mu_i dt + \sigma dW_t, \quad i \in \{0, 1\}, ~ t \in [0, T],
\end{align}
with initial condition $X_0 = x \in \mathbb{R}$. Here, $W = (W_t)_{t \in [0, T]}$ is a standard Brownian motion. The parameters are chosen with symmetry: the drift coefficients are $\mu_0 = -2$ and $\mu_1 = 2$, and the volatility is $\sigma = 0.5$. This symmetric setup induces a natural switching logic to correct the state's deviation.

The controller's objective is to maximize the expected total reward over the horizon $[0, T]$, which comprises a running reward and a terminal reward:
\begin{align*}
f(x) = 2e^{-2x^2} - 0.1, \quad h(x) = 2e^{-2x^2}, \quad x \in \R.
\end{align*}
The Gaussian bump shape of the functions $f$ and $h$ creates a strong incentive to maintain the state $X_t$ near zero, as the reward attains its maximum value at $x=0$. Each switch between regimes incurs a cost, specified as $g_{01} = g_{10} = 0.5$. This cost penalizes excessive control actions, forcing the optimal policy to strategically balance the benefit of corrective switching against the incurred cost.

We use a discrete version of \eqref{eq:example1-X} for $t = 0, \Delta t,..., K\Delta t$ with $T=1$, $K = 100$ and $\Delta t = T/K$. The value function and policy are approximated by a neural network in the PyTorch framework
with the architecture and parameters summarized in Table~\ref{tab:nn_params_regulator}. 

In order to approximate the expectation operator in the theoretical formulation via Monte Carlo methods—that is, to estimate the gradient direction using the average of finite samples so that the update of parameters $\xi$ is not based on a single specific scenario—we employ batch learning to train the neural network parameters $\xi$. This approach not only enhances training stability but also ensures that the learned policy is optimal in an expected sense.

\begin{table}[htbp]
\centering
\caption{Neural Network Architecture and Training Parameters for the Regulator Problem}
\label{tab:nn_params_regulator}
\begin{tabular}{lc}
\toprule
\textbf{Component} & \textbf{Specification} \\
\midrule
Network Architecture & 2 hidden layers \\
Activation Functions & ReLU (Layer 1), Tanh (Layer 2) \\
Hidden Dimension & 128 \\
Batch Size & 64 \\
Optimizer & Adam \\
Learning Rate & \( 1 \times 10^{-3} \) \\
Training Episodes & 1000 \\
\bottomrule
\end{tabular}
\end{table}

The training progression under the temperature parameter  $\lambda = 0.2 $ is shown in Figure~\ref{fig:convergence}-(a). The loss function decreases efficiently and stabilizes after approximately 400 episodes, indicating the robust convergence of the policy iteration in the RL algorithm. Figure~\ref{fig:convergence}-(b) depicts the learned value functions and the optimal switching probabilities at $t = 0.5$. The near symmetry between the value functions for regime 0 (blue line) and regime 1 (orange line) is a direct consequence of the symmetric problem parameters. The switching probabilities---from regime 0 to 1 (green line) and from regime 1 to 0 (yellow line)---are calculated from the optimal intensity $\pi$. 

\begin{figure}[ht]
  \centering
   \subfigure[]{
        \includegraphics[width=0.45\textwidth]{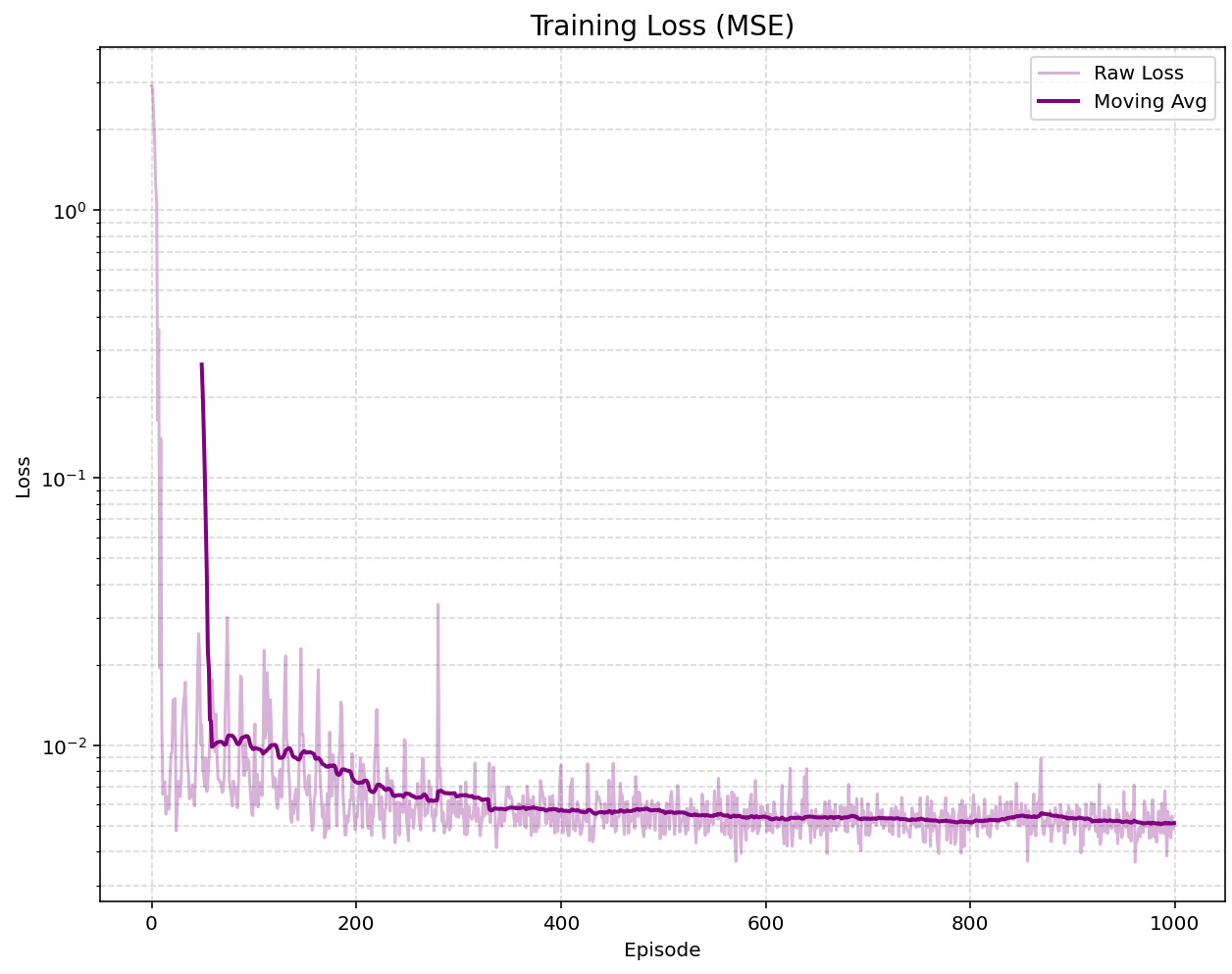}
    }
       \subfigure[]{
        \includegraphics[width=0.45\textwidth]{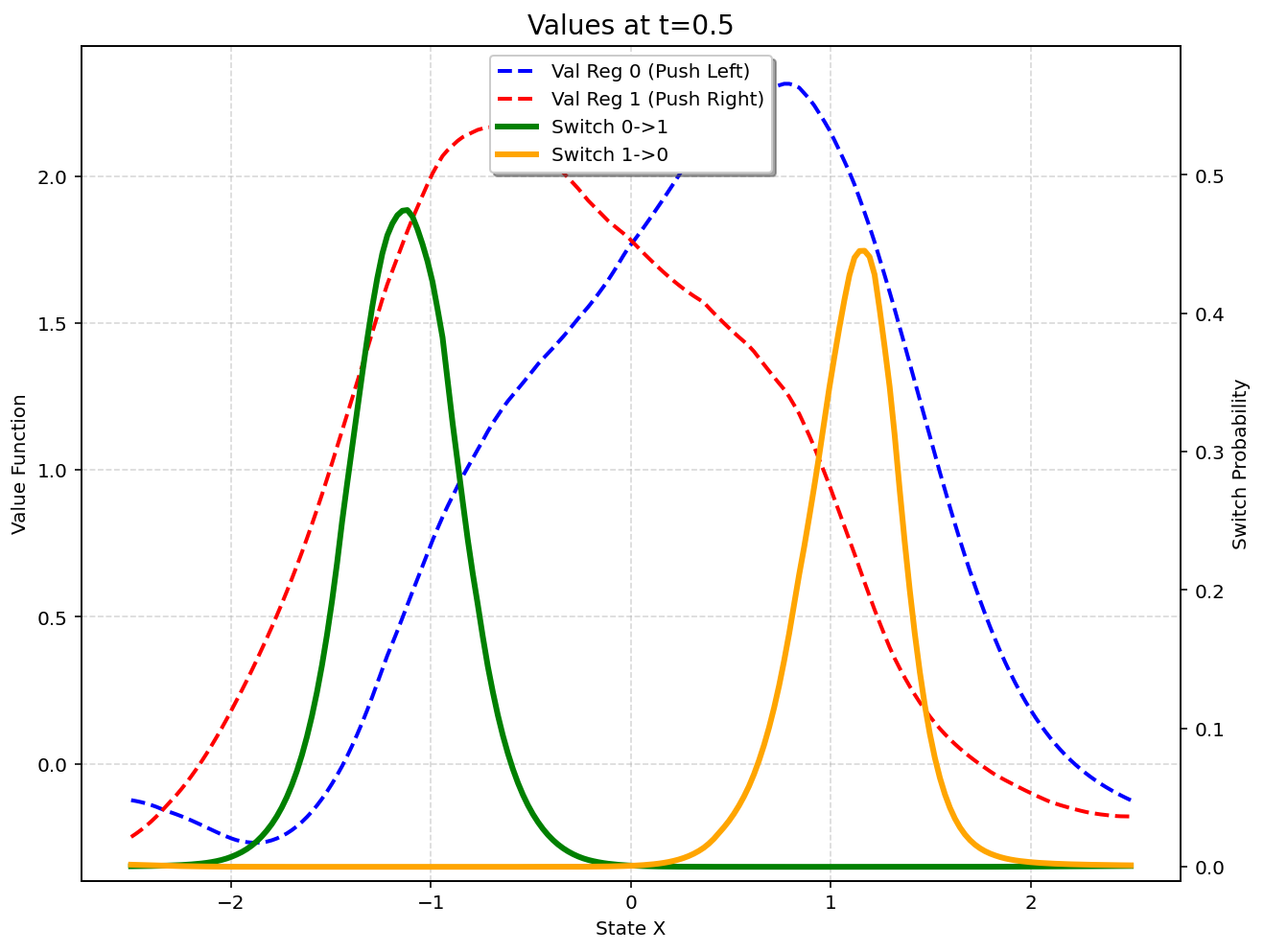}
    }
  \caption{(a): Convergence of the training loss for the bounded regulator problem with $\lambda=0.2$. (b): Learned value functions and switching probabilities at $t=0.5$ and $I_{0.5}=0$ for \( \lambda=0.2 \), i.e, $P(I_{t+\Delta t} = 1 | I_t = 0, X_t = x, t = 0.5)$.}
  %{\color{red}(A bit confused here for the switching probability: e.g., is it the probability for $I_{0.5}=1$ given $I_0=0$?)}\textcolor{blue}{No. Precisely, the probability is $P(I_{t+\Delta t} = 1 | I_t = 0, X_t = x, t = 0.5)$ not $P(I_{0.5} = 1 | I_0 = 0)$}.

  \label{fig:convergence}
\end{figure}

A central theoretical result is the convergence of the exploratory solution to the classical optimal switching policy as the temperature parameter $\lambda$ tends to zero. We validate this numerically.
Figure~\ref{fig:compare_lambda_convergence}-(a) shows that the training loss decreases for different values of $\lambda$, with convergence achieved in all cases. More importantly, Figure~\ref{fig:compare_lambda_convergence}-(b) illustrates the fundamental transformation of the optimal policy. For a larger $\lambda$ (e.g., 0.2), the switching probability is a smooth function of the state, reflecting exploratory randomization. As $\lambda$ decreases to $1\times 10^{-6}$, the probability curve becomes sharp and nearly binary, approaching a deterministic threshold-based policy.

\begin{figure}[ht]
  \centering
   \subfigure[]{
        \includegraphics[width=0.45\textwidth]{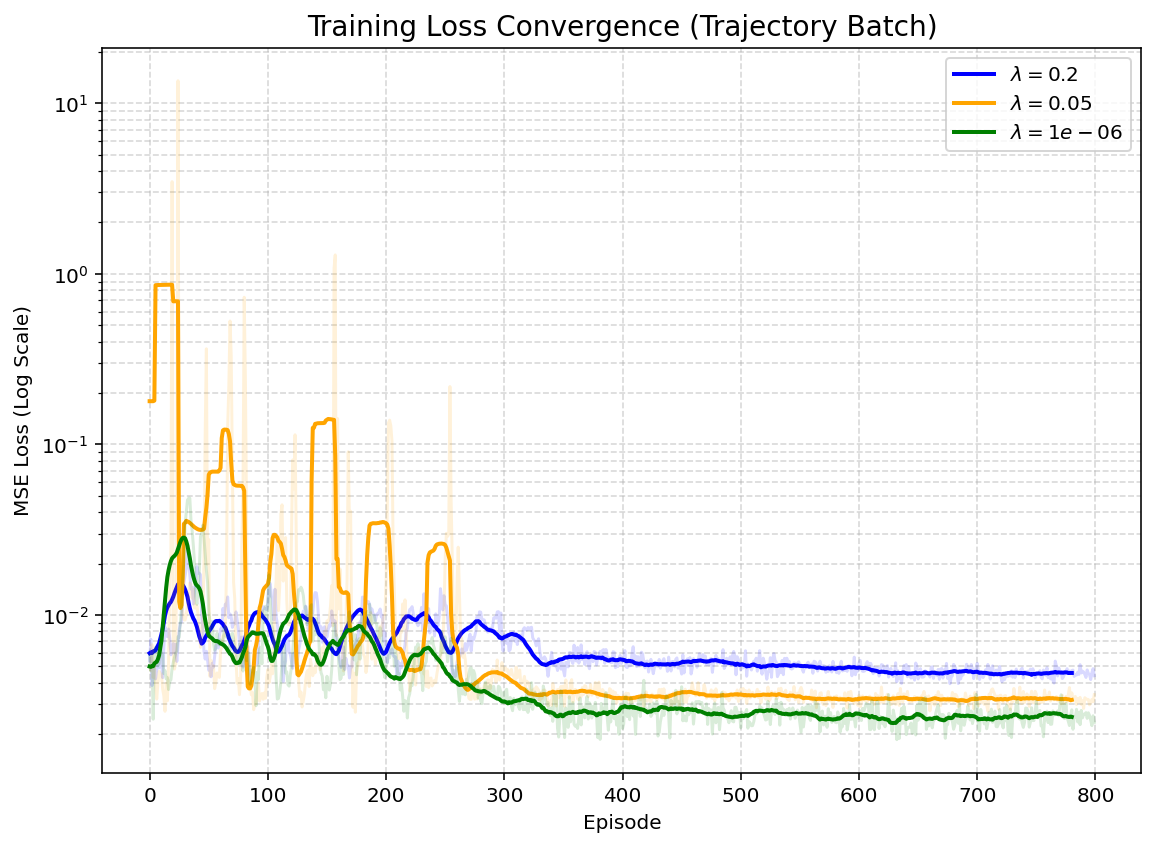}
    }
       \subfigure[]{
        \includegraphics[width=0.45\textwidth]{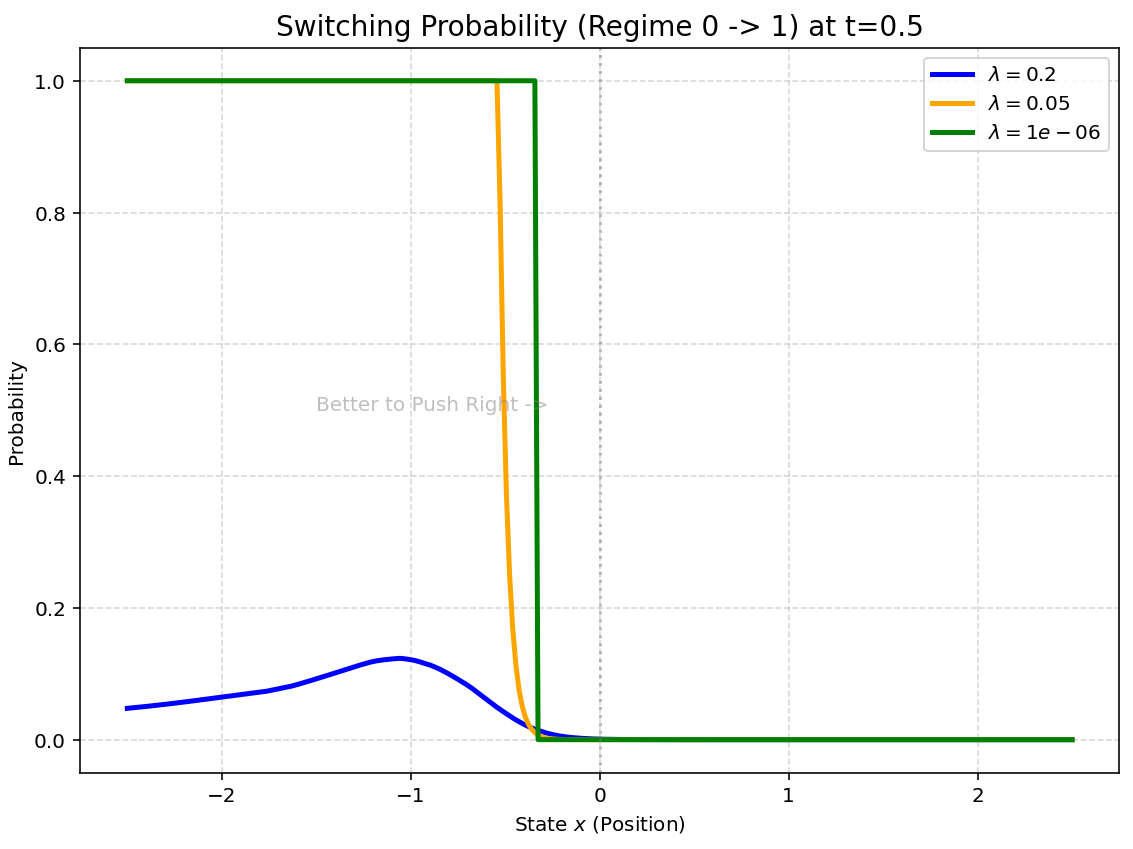}
    }
  \caption{(a): Training convergence for different temperature parameters \( \lambda \). (b): Evolution of the switching probability from regime 0 to 1 as \( \lambda \) decreases.} \label{fig:compare_lambda_convergence}
\end{figure}

To benchmark the proposed method, we construct a classical reference solution by solving the no-exploration HJB variational inequality system (i.e., the classical optimal switching problem with $\lambda = 0$) using a finite-difference scheme on a sufficiently fine grid. This provides a fully known-model baseline against which the exploratory RL algorithm can be evaluated. The RL agent is trained under entropy regularization with $\lambda = 5, 0.1, 10^{-6}$, and the resulting value functions are compared with the finite-difference reference at two representative time instants, $t = 0.5$ and $t = 0.8$, as shown in Figure~\ref{fig:benchmark-example1}. The comparison reveals that the learned RL value functions faithfully capture the overall shape of the classical solution across both regimes. As anticipated, the closest match occurs at the smallest temperature parameter $\lambda = 10^{-6}$, which lends strong numerical support to Theorem~\ref{thm:convergence-lambda}: the exploratory HJB solutions converge to the classical variational inequality solution as $\lambda \to 0$. These results thus confirm that the proposed continuous-time RL framework is capable of recovering the value structure of the classical optimal switching problem in the vanishing-regularization limit.

\begin{figure}[h]
  \centering
    \centering
   \subfigure[]{
  \includegraphics[width=0.9\textwidth]{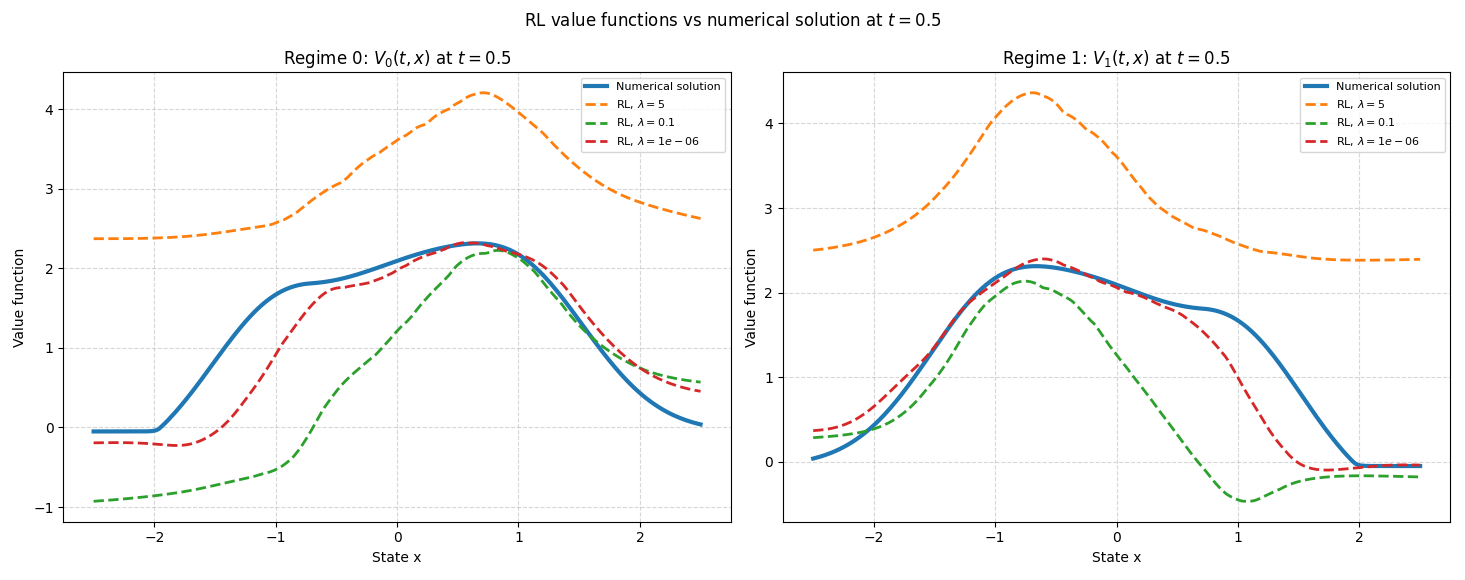}
%  \caption{ Comparison of the learned entropy-regularized value functions for various temperature parameters $\lambda$ and the classical value function at  $t=0.5$.}
%  \label{fig:benchmark-example1-1}
%\end{figure}
   }

       \subfigure[]{
%\begin{figure}[h]
%  \centering
  \includegraphics[width=0.9\textwidth]{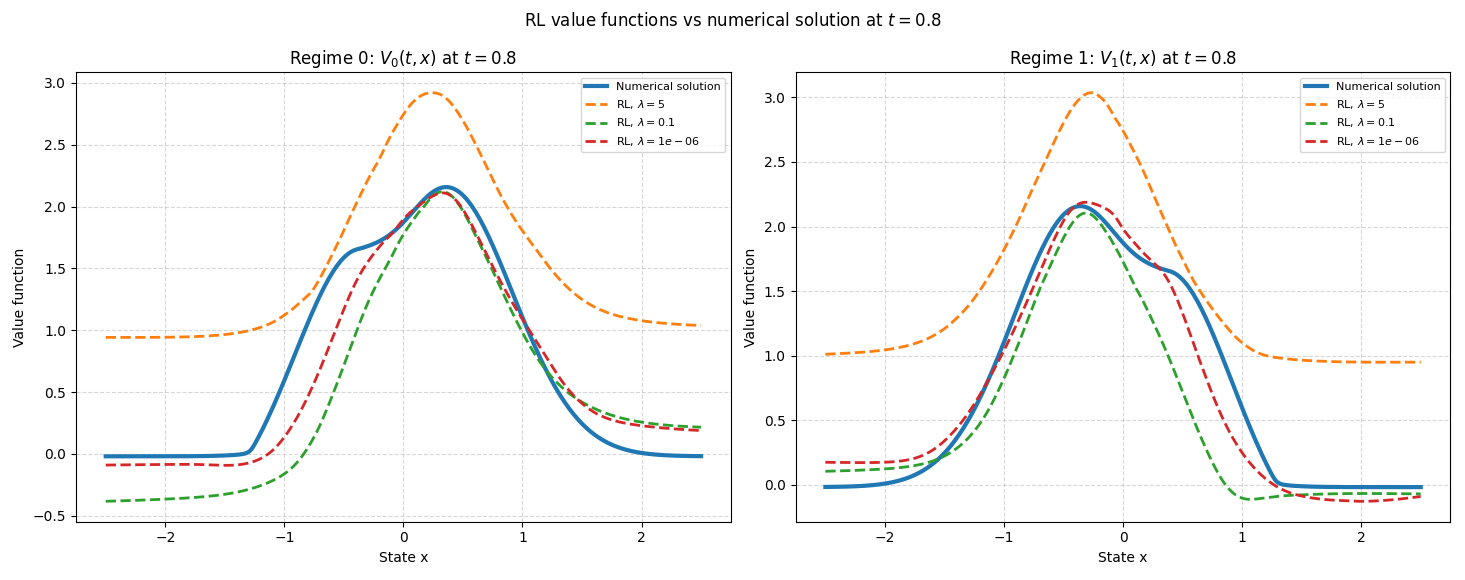}
  }
  \caption{ The learned entropy-regularized value functions under different temperature parameters $\lambda$\ \ vs\ \ the numerical value function at (a) $t=0.5$; (b) $t=0.8$.}
  \label{fig:benchmark-example1}
\end{figure}

%\begin{figure}[h]
%  \centering
%  \includegraphics[width=0.55\textwidth]{Figures/compare_lambda_convergence.png}
%  \caption{Training convergence for different temperature parameters \( \lambda \).}
%  \label{fig:compare_lambda_convergence}
%\end{figure}

%\begin{figure}[h]
%  \centering
%  \includegraphics[width=0.55\textwidth]{Figures/compare_lambda_probability.png}
%  \caption{Evolution of the switching probability from regime 0 to 1 as \( \lambda \) decreases.}
%  \label{fig:compare_lambda_probability}
%\end{figure}

\subsection{Put Option Selection Problem}
In this second example, we consider an investor who aims to optimally switch an investment decision between three regimes: two European put options on different assets and a risk-free savings account. The investor's wealth can be allocated to one of three regimes during the finite horizon $[0,T]$:
\begin{itemize}
    \item regime 0: a put option on Stock $A$.
    \item regime 1: a put option on Stock $B$.
    \item regime 2: the risk-free savings account.
\end{itemize}
The underlying stock prices follow the geometric Brownian motion:
\begin{align*}
dS^A_t = \mu^A S^A_t dt + \sigma^A S^A_t dW_t, \quad dS^B_t = \mu^B S^B_t dt + \sigma^B S^B_t dW_t,\quad t\in(0,T],
\end{align*}
with $S^A_0=s^A\in[0,\infty), S^B_0=s^B\in[0,\infty)$. The parameters are set as $(\mu^A,\sigma^A)=(0.1,0.2)$ and $(\mu^B,\sigma^B)=(0.05,0.1)$, and $W = (W_t)_{t \in [0, T]}$ is a standard Brownian motion.  The risk free rate is $r=0.05$.  For any time $t\in[0,T]$, the investor decides an action $I_t\in\{1,2,3\}$, which determines the regime in which the investor's wealth is allocated. Switching between regimes incurs transaction costs given by the matrix:
\begin{align*}
G = (g_{ij})_{1\leq i,j\leq 3}=
\begin{bmatrix}
0 & 0.02 & 0.01 \\
0.02 & 0 & 0.01 \\
0.02 & 0.02 & 0
\end{bmatrix}.
\end{align*}

The investor's objective is to maximize the expected total  reward over the horizon $[0, T]$, where the running reward function is given by
\begin{align*}
f(s^A,s^B,i) =
\begin{cases}
(S_K-s^A)^+, &i=1,\\
(S_K-s^B)^+, &i=2,\\
rS_K, &i=3,
\end{cases}
\end{align*}
with the strike price $S_K= 1$ and $(x)^+:=\max\{x,0\}$ for $x\in\R$. The terminal reward function is assumed to be $0$.

We set the time horizon $T=1$, the number of time intervals $K=50$, the step size $\Delta t=T/K=0.02$, and  the temperature parameter  $\lambda = 0.1$. The value function and policy are approximated by a neural network with the architecture and parameters summarized in Table~\ref{tab:nn_params_regulator-portfolio}. The model was implemented within the PyTorch framework.

\begin{table}[htbp]
\centering
\caption{Neural Network Architecture and Training Parameters for the Regulator Problem}
\label{tab:nn_params_regulator-portfolio}
\begin{tabular}{lc}
\toprule
\textbf{Component} & \textbf{Specification} \\
\midrule
Network Architecture & 2 hidden layers \\
Activation Functions & Tanh (Layer 1), Tanh (Layer 2) \\
Hidden Dimension & 128 \\
Batch Size & 1024 \\
Optimizer & Adam \\
Learning Rate & \( 1 \times 10^{-4} \) \\
Training Episodes & 1000 \\
\bottomrule
\end{tabular}
\end{table}

According to Figure \ref{fig:loss_option}, at the beginning of training, the loss exhibits a oscillation, and the convergence is very pronounced. It becomes stable when the number of episodes exceeds 800. We then plot in  Figure \ref{fig:asset_allocation} the learnt value functions $(V^1,V^2,V^3)$. Conforming to financial intuition, each value function $V^i$ (for $i=1,2,3$)  is a decreasing function of both underlying stock prices  $s^A$ and $s^B$. 
%the allocation of the asset at time $t=0.5$. We can find that, when stock price of A and B large enough, the investor will put all in bank. When stock B has lower price, she tends to hold put \textcolor{blue}{B}; When stock A has lower price, she tends to hold put \textcolor{blue}{A}. %{\color{red}(higher not lower?)}
 \begin{figure}[htbp]
  \centering
  \includegraphics[width=0.6\textwidth]{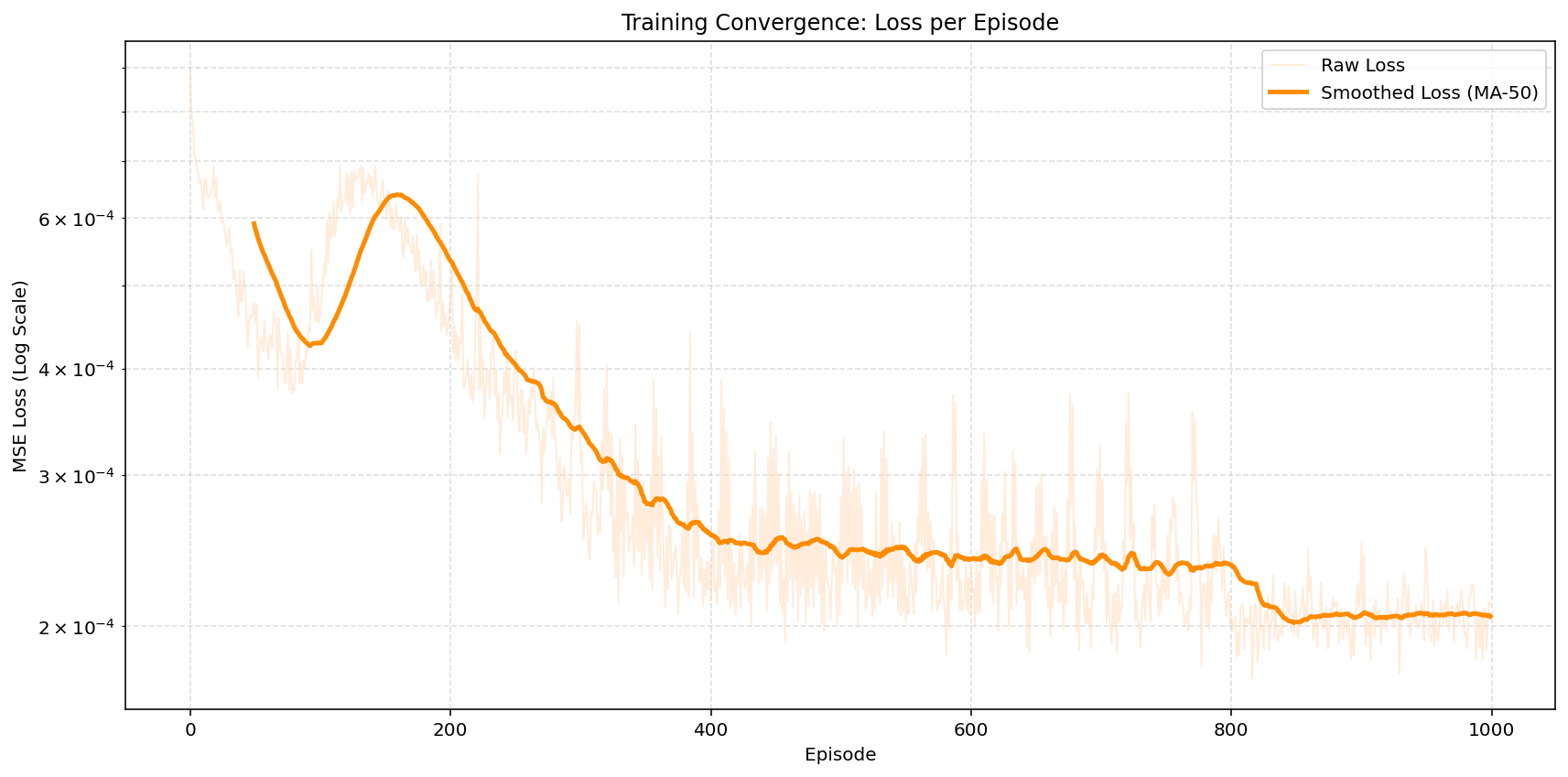}
  \caption{The training loss for the  put option selection problem.}
  \label{fig:loss_option}
\end{figure}

We further validate the algorithm by comparing its outputs with a finite-difference solution of the classical HJB variational inequality system (corresponding to $\lambda = 0$) on the same example, this time assuming full knowledge of the model coefficients. Figure~\ref{fig:benchmark-example2} displays this comparison at $t = 0.5$ along representative cross-sections of the two-dimensional state space, with one state variable held fixed to highlight the dependence on the other. The RL value functions are computed for $\lambda = 0.01, 0.5, 1$. As expected, smaller $\lambda$ yield solutions increasingly close to the no-exploration benchmark; notably, the curve for $\lambda = 0.01$ already exhibits excellent qualitative and quantitative agreement with the finite-difference reference. At the smallest tested value, the learned functions become nearly indistinguishable from the classical solution across the entire domain, offering compelling evidence of convergence as the temperature vanishes.  This numerical comparison confirms that the proposed continuous-time RL method successfully recovers the value structure of the classical optimal switching problem in the limit of vanishing entropy regularization, thereby demonstrating both the validity of the theoretical framework and the practical effectiveness of the algorithm.

%
%\begin{figure}[htbp]
%  \centering
%  \includegraphics[width=0.55\textwidth]{Figures/asset allocation.png}
%  \caption{The optimal asset allocation policy, i.e., $\argmax_j{V(t,S^A,S^B,j)}$ at $t=0.5$ and $\lambda=0.1$, as a function of stock prices $S^A$ and $S^B$.{\color{red}(A bit confused: as we are in the exploratory setup, how can we get these regions instead of transition rates?)}\textcolor{blue}{Because this figure represents the optimal asset allocation, what it shows is the optimal switching regions for each pair of current prices (A, B) at t=0.5. In other words, the plot displays $\argmax_j{V}$, not the transition rates $\pi$.}}
%  \label{fig:asset_allocation}
%\end{figure}

\begin{figure}[htbp]
  \centering
  \includegraphics[width=0.9\textwidth]{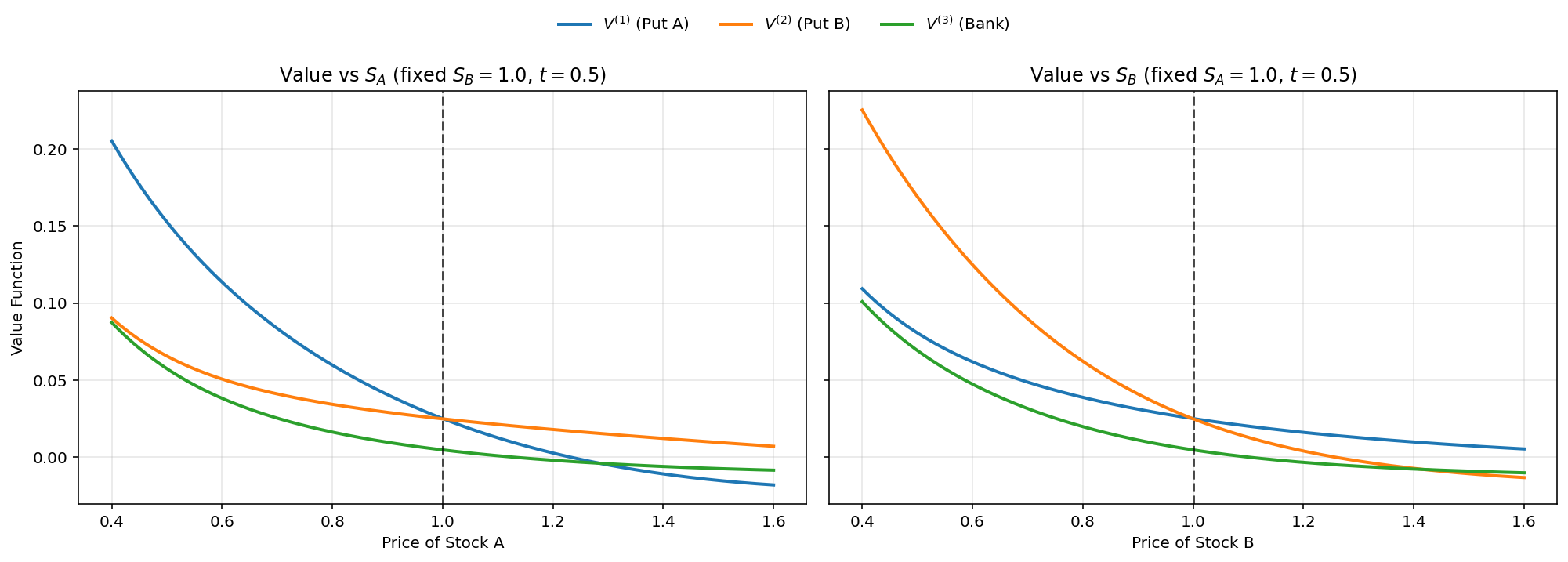}
  \caption{Left panel: The learnt value functions $s^A\to (V^1,V^2,V^3)$ with $(s^B,t)=(1.0,0.5)$. Right panel: The learnt value functions $s^B\to (V^1,V^2,V^3)$ with $(s^A,t)=(1.0,0.5)$. }
  \label{fig:asset_allocation}
\end{figure}

\begin{figure}[htbp]
  \centering
    \centering
  \includegraphics[width=0.9\textwidth]{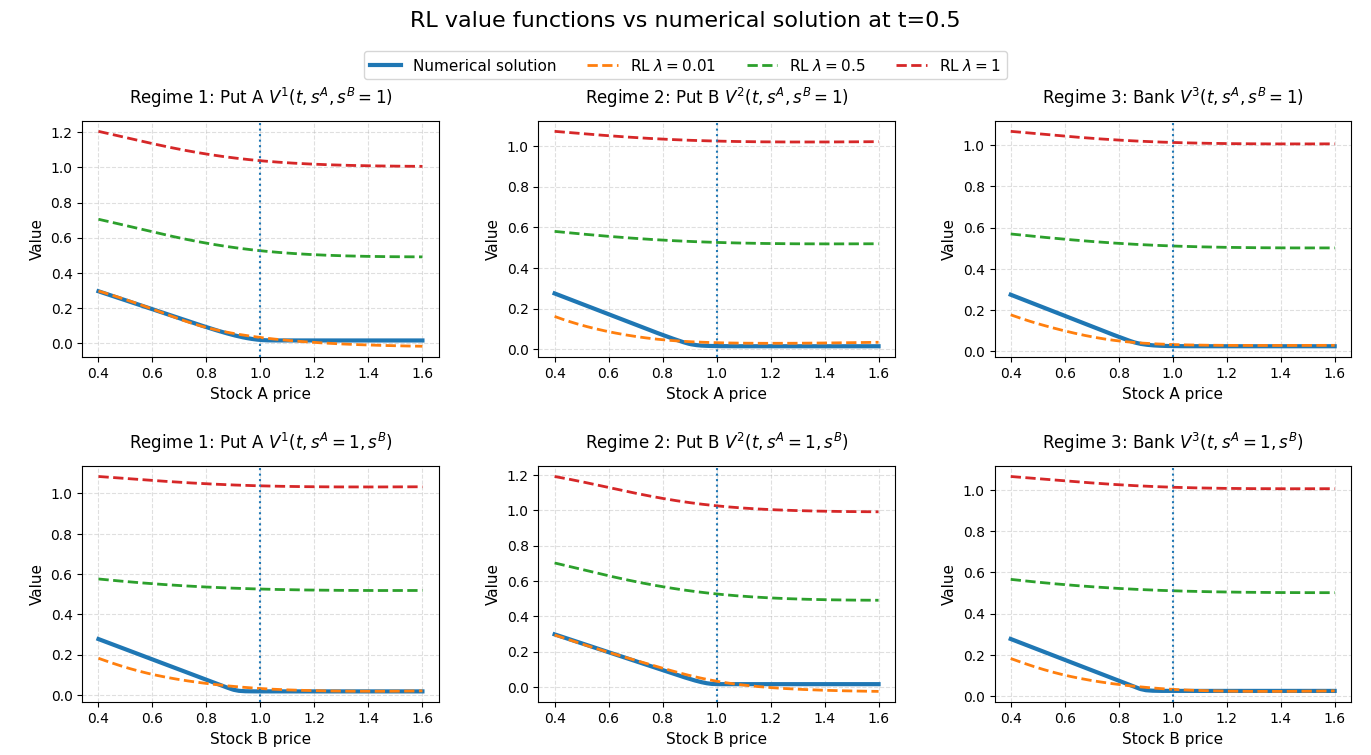}
  \caption{ The learned entropy-regularized value functions under different temperature parameters $\lambda$\ \ vs\ \ the numerical  value function at  $t=0.5$.}
  \label{fig:benchmark-example2}
\end{figure}

\newpage
\noindent
\textbf{Acknowledgements}:  Yijie Huang, Mengge Li and Xiang Yu are supported by the Hong Kong RGC General Research Fund (GRF) under grant no. 15211524 and the Hong Kong Polytechnic University research grant under no. P0045654.

\bibliographystyle{abbrvnat}
{\small
\bibliography{reference}
}
\end{document}